\newtheorem{thm}{Theorem}[section]
\newtheorem{claim}[thm]{Claim}
\newtheorem{lemma}[thm]{Lemma}
\newtheorem{lem}[thm]{Lemma}
\newtheorem{conjecture}[thm]{Conjecture}
\def\QED{\hfill \rule{7pt}{7pt}}
\newcommand{\dA}{{\mathcal{A}}}
\newcommand{\dG}{{\mathcal{G}}}
\newcommand{\dN}{{\mathbb{N}}}
\newcommand{\dR}{{\mathcal{R}}}
\newcommand{\dE}{{\mathcal{E}}}
\newcommand{\dM}{{\mathfrak{M}}}
\newcommand{\dm}{{\mathfrak{m}}}
\newcommand{\dn}{{\mathfrak{n}}}
\newcommand{\dJ}{{\mathfrak{J}}}
\newcommand{\e}{{\textbf{e}}}
\newcommand{\RNum}[1]{\uppercase\expandafter{\romannumeral #1\relax}}
\newcommand{\1}{{\uppercase\expandafter{\romannumeral1}}}
\newcommand{\2}{{\uppercase\expandafter{\romannumeral2}}}
\begin{document}
\title{Improvements on induced subgraphs of given sizes}

\author{
Jialin He\thanks{School of Mathematical Sciences, USTC, Hefei, Anhui 230026, China. Email: hjxhjl@mail.ustc.edu.cn.}
\and
Jie Ma\thanks{School of Mathematical Sciences, USTC, Hefei, Anhui 230026, China. Email: jiema@ustc.edu.cn.
Research supported in part by NSFC grant 11622110, National Key Research and Development Project SQ2020YFA070080, and Anhui Initiative in Quantum Information Technologies grant AHY150200.}
\and
Lilu Zhao\thanks{School of Mathematics, Shandong University, Jinan 250100, China. Email: zhaolilu@sdu.edu.cn.
Research supported in part by NSFC grant 11922113.}
}
\date{}
\maketitle

\begin{abstract}
Given integers $m$ and $f$, let $S_n(m,f)$ consist of all integers $e$ such that every $n$-vertex graph with $e$ edges contains an $m$-vertex induced subgraph with $f$ edges,
and let $\sigma(m,f)=\limsup_{n\rightarrow\infty} |S_n(m,f)|/\binom{n}{2}$.
As a natural extension of an extremal problem of Erd\H{o}s, this was investigated by Erd\H{o}s, F\"uredi, Rothschild and S\'os twenty years ago.
Their main result indicates that integers in $S_n(m,f)$ are rare for most pairs $(m,f)$,
though they also found infinitely many pairs $(m,f)$ whose $\sigma(m,f)$ is a fixed positive constant.
Here we aim to provide some improvements on this study.
Our first result shows that $\sigma(m,f)\leq \frac12$ holds for all but finitely many pairs $(m,f)$ and the constant $\frac12$ cannot be improved. This answers a question of Erd\H{o}s et. al.
Our second result considers infinitely many pairs $(m,f)$ of special forms,
whose exact values of $\sigma(m,f)$ were conjectured by Erd\H{o}s et. al.
We partially solve this conjecture (only leaving two open cases)
by making progress on some constructions which are related to number theory.
Our proofs are based on the research of Erd\H{o}s et. al and involve different arguments in number theory.
We also discuss some related problems.
\end{abstract}


\section{Introduction}
The {\it Tur\'an number} $ex(n,H)$ of a graph $H$ denotes the maximum number of edges in an $n$-vertex graph which does not contain $H$ as a subgraph.
Since the seminal work of P. Tur\'an, the study of Tur\'an numbers has been a central theme in extremal graph theory (see the survey \cite{FS13}).
A natural generalization, which was proposed by Erd\H{o}s \cite{E64} in 1963 to reduce the structure of forbidden subgraphs to one parameter (namely, their size),
asks the maximum number of edges in an $n$-vertex graph where every $m$-vertex subgraph spans less than $f$ edges.\footnote{Here and throughout this paper, $m$ and $f$ are integers satisfying $m\ge 2$ and $0\le f\le \binom{m}{2}$.}
This density problem and its notorious hypergraph version (initialed in \cite{BES}) are related to difficult problems in number theory (e.g. the work of Ruzsa-Szemer\'edi \cite{RS} on Roth's theorem \cite{Roth}) and remain unsolved in general.

Here we consider another natural extremal problem, which can be viewed as an ``induced subgraph'' analogue of the above Erd\H{o}s' problem in \cite{E64}.
This was first investigated by Erd\H{o}s, F\"uredi, Rothschild and S\'os \cite{EFRS}.
Following their notation, we say $(n,e)\rightarrow(m,f)$ if every $n$-vertex graph with $e$ edges contains an induced $m$-vertex subgraph with exactly $f$ edges.
Taking an example, if $t_p(n)$ denotes the number of edges in the complete balanced $p$-partite graph on $n$ vertices,
then Tur\'an's theorem can be equivalently stated as that $(n,e)\rightarrow(m,\binom{m}{2})$ if and only if $e>t_{m-1}(n)$.
For a fixed pair $(m,f)$, let $S_n(m,f)=\{e: (n,e)\rightarrow(m,f)\}$ and let
\begin{equation}\label{equ:sigma}
\sigma(m,f)=\limsup_{n\rightarrow\infty}\frac{|S_n(m,f)|}{\binom{n}{2}}.
\end{equation}
Since $S_n(m,f)$ is a subset of $\left\{0,1,...,\binom{n}{2}\right\}$ which cannot contain $0$ and $\binom{n}{2}$ simultaneously,
the fraction on the right hand of \eqref{equ:sigma} is at most $1$ and thus any pair $(m,f)$ satisfies $0\leq \sigma(m,f)\leq 1$.
In \cite{EFRS}, Erd\H{o}s, F\"uredi, Rothschild and S\'os gave a number of constructions arising from extremal graph theory,
which reveal that for most pairs $(m,f)$, the pairs $(n,e)$ satisfying $e\in S_n(m,f)$ are relatively rare (in sense of the measure $\sigma(m,f)$).
Their main result is as follows.
Throughout the rest, let $\mathcal{A}= \{(2,0),(2,1),(4,3),(5,4),(5,6)\}$.

\begin{thm}[\cite{EFRS}]\label{Thm:EFRS}
If $(m,f)\notin \mathcal{A}$, then $\sigma(m,f)\leq \frac23$;
otherwise, $\sigma(m,f)=1$.
\end{thm}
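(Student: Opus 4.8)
The plan is to treat separately the five exceptional pairs in $\mathcal{A}$, where one must show $\sigma(m,f)=1$, and the generic pairs $(m,f)\notin\mathcal{A}$, where one must produce, for a positive proportion of values $e$, an $n$-vertex graph with $e$ edges containing no induced $m$-vertex subgraph with exactly $f$ edges. Throughout it is convenient to record the elementary symmetry $\sigma(m,f)=\sigma(m,\binom m2-f)$ (take complements: $e\in S_n(m,f)$ iff $\binom n2-e\in S_n(m,\binom m2-f)$), which in particular forces $\mathcal{A}$ to be invariant under $f\mapsto\binom m2-f$ --- and indeed it splits into the complementary packets $\{(2,0),(2,1)\}$, $\{(5,4),(5,6)\}$ and the self-paired $\{(4,3)\}$. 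For $(2,0)$ and $(2,1)$ the claim is immediate: $(n,e)\rightarrow(2,0)$ fails only for $e=\binom n2$ and $(n,e)\rightarrow(2,1)$ fails only for $e=0$, so $|S_n(m,f)|=\binom n2$.

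For each of the three remaining pairs in $\mathcal{A}$ I would prove a structural classification of the $n$-vertex graphs $G$ that contain no induced $m$-subgraph with exactly $f$ edges (this simultaneously classifies the complements, since for $(4,3)$ the family $\{P_4,K_{1,3},K_3\cup K_1\}$ is closed under complementation, while the $(5,4)$-avoiding graphs are exactly the complements of the $(5,6)$-avoiding ones). I expect every such $G$ to be forced into a short list of highly structured families, each of which contains only $O(n)$ distinct values of $e(G)$ on $n$ vertices; for instance, for $(4,3)$ the absence of an induced $P_4$ already makes $G$ a cograph, and layering on the conditions ``claw-free'' and ``$(K_3\cup K_1)$-free'' should pin $G$ down to disjoint unions of the fixed small graphs $K_1,K_2,P_3,C_4$, to complete-multipartite-type graphs with parts of bounded size, and to complements of these. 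Since $S_n(m,f)$ is the complement in $\{0,\dots,\binom n2\}$ of the set of realizable edge counts, this gives $|S_n(m,f)|=\binom n2-o(n^2)$ and $\sigma(m,f)=1$. The case analysis is the main obstacle here; in particular it must exhibit why $(5,5)$ escapes $\mathcal{A}$ although the family of $5$-vertex graphs with $5$ edges is itself complementation-closed --- the reason being that the $(5,5)$-avoiding graphs remain flexible enough to realize $\Omega(n^2)$ edge counts.

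For $(m,f)\notin\mathcal{A}$ the plan is to assemble a small repertoire of explicit graphs. A guiding observation is that replacing one vertex of an $m$-set changes its number of edges by at most $m-1$ and the swap graph on the $m$-subsets of $[n]$ is connected, so the set of edge counts of induced $m$-subgraphs of \emph{any} graph has no gap exceeding $m-1$; hence a graph can avoid $f$ only when $f$ is below the minimum, above the maximum, or inside a genuine (e.g.\ parity) gap of this set. Accordingly one uses ``rigid skeleton plus flexible filler'' graphs: a skeleton --- a clique $K_t$, a complete multipartite graph, or a blow-up of a fixed small graph, chosen so that the edge counts of its $m$-vertex induced subgraphs form a proper $f$-avoiding subset of $\{0,\dots,\binom m2\}$ --- together with a sparse filler (a matching, or a vertex-disjoint union of cliques $K_s$ with $s<m$) on the remaining vertices, whose own number of edges is a free parameter that does not create an induced $m$-subgraph with $f$ edges, and all complements of these. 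For example, $K_t+M_p$ (a $t$-clique joined to a matching with $p$ edges) has induced $m$-subgraphs with $\binom m2-\binom\ell2+q$ edges ($0\le\ell\le m$, $0\le q\le\lfloor\ell/2\rfloor$, subject to the obvious size restrictions), so it avoids every $f$ outside an explicit union of short intervals built from binomial coefficients, while its edge count $\binom n2-\binom{n-t}2+p$ sweeps, as $t$ and $p$ vary, a union of pairwise-disjoint intervals of total length $(\tfrac12-o(1))\binom n2$, with the complementary construction covering the reflected set. A direct computation should then show that for every $(m,f)\notin\mathcal{A}$ at least one member of the repertoire simultaneously avoids $f$ and contributes intervals covering in total at least $(\tfrac13-o(1))\binom n2$ values of $e$ --- and, crucially, that the only obstruction to reaching this proportion is exactly that $(m,f)\in\mathcal{A}$. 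Carrying out that last case analysis, on the position of $f$ relative to the triangular values $\binom k2$, $\binom m2-\binom k2$ and the interval families above, is the principal difficulty of the generic regime.
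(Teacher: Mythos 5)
First, a point of reference: this paper does not prove Theorem~\ref{Thm:EFRS} at all --- it is quoted from \cite{EFRS}, and the present paper only imports specific consequences of the EFRS machinery (the bounds collected in Lemma~\ref{lem:P}). So your proposal can only be measured against the original EFRS argument, which is indeed construction-based in the spirit you describe (unions of cliques, complete multipartite graphs, a clique joined to a sparse filler, and complements), combined with number-theoretic input about which $f$ admit representations such as $f=\binom a2$, $f=\binom m2-\binom b2$, $f=c(m-c)$, or sums of binomial coefficients.

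The genuine gap is that the two decisive steps of your plan are asserted rather than carried out, and they are exactly where all the difficulty of the theorem lives. For the bound $\sigma(m,f)\le\frac23$ you must show that, for \emph{every} pair $(m,f)\notin\mathcal A$, some $f$-avoiding construction realizes at least $(\frac13-o(1))\binom n2$ edge counts; you defer this to ``a direct computation'' and a case analysis on the position of $f$ relative to triangular numbers. But for the pairs where $f$ \emph{is} representable in the forms above (precisely the pairs of Theorem~\ref{Thm:Non zeros}, where $\sigma(m,f)\ge\frac1r>0$), the skeleton-plus-filler graphs you list do contain induced $m$-sets with $f$ edges, and one needs different avoiding families together with nontrivial arithmetic facts (e.g.\ about which $e$ lie in sets like $C(n,r)$ of \eqref{equ:Cnr}, or results such as Lemma~\ref{Lem:Bennett}) to still reach density $\frac13$; nothing in your sketch explains how those cases are closed, nor why the only failures are the five pairs of $\mathcal A$. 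Similarly, for $(4,3)$, $(5,4)$, $(5,6)$ you only state that you ``expect'' the avoiding graphs to fall into families realizing $o(n^2)$ edge counts; the structural classification (e.g.\ of $\{P_4,K_{1,3},K_3\cup K_1\}$-free graphs and the corresponding $5$-vertex cases) is the proof of $\sigma=1$ and is not given. As it stands the proposal is a reasonable reconstruction of the EFRS strategy, but it does not constitute a proof of either half of the statement.
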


Motivated by this result, the authors \cite{EFRS} raised the question if
\begin{equation}\label{equ:1/2}
\mbox{``$\sigma(m,f)>\frac12$ holds for only finitely many pairs".}
\end{equation}
Along the way to prove Theorem~\ref{Thm:EFRS}, an interesting intermediate result in \cite{EFRS} (see Construction 8 therein) says that the majority of the pairs $(m,f)$ satisfy $\sigma(m,f)=0$.
On the other hand, they \cite{EFRS} also proved the following ``positive'' result by bounding $\sigma(m,f)$ below by a positive constant for infinitely many pairs $(m,f)$.

\begin{thm}[\cite{EFRS}]\label{Thm:Non zeros}
Let $m, f$ be integers such that there exist positive integers $a,b,c$ satisfying $f=\binom{a}{2}=\binom{m}{2}-\binom{b}{2}=c(m-c)$. Suppose that $r$ is the smallest integer such that $f$ can be written in the following form
\begin{equation}\label{equ:r}
\begin{split}
f=\sum_{i=1}^{r+1}\binom{x_i}{2},\mbox{ where integers } x_i\geq 1 \mbox{ satisfy } \sum_{i=1}^{r+1}x_i=m.
\end{split}
\end{equation}
Then $\sigma(m,f)\geq \frac1r.$ Moreover, for $r\geq 9$ it holds that $\sigma(m,f)=\frac1r.$
\end{thm}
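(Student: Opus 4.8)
The statement has two halves: the bound $\sigma(m,f)\ge 1/r$, which should hold for every admissible $(a,b,c)$, and the reverse bound $\sigma(m,f)\le 1/r$, claimed only for $r\ge 9$. These call for opposite kinds of argument --- a ``forcing'' argument for the lower bound and a ``construction'' argument for the upper bound --- so I would treat them separately, with number theory entering the second (and, in a milder way, the calibration of the first).

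For the lower bound the plan is to exhibit an interval $\{e_1,e_1+1,\dots,e_2\}$ with $e_2-e_1\ge(\tfrac1r-o(1))\binom n2$, avoiding the edge-numbers realised by a short list of extremal families, and to show $(n,e)\to(m,f)$ for every $e$ in it. Fix an $n$-vertex graph $G$ with $e$ edges in this interval. The engine is the local-exchange identity: if $A'=A-u+w$ is obtained from an $m$-set $A$ by swapping one vertex, then $e(G[A'])-e(G[A])=|N(w)\cap(A-u)|-|N(u)\cap(A-u)|\in[-(m-1),m-1]$, so along any sequence of single-vertex swaps the induced edge count moves in steps of size at most $m-1$. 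Once one produces an $m$-set with fewer than $f$ edges and another with more than $f$ edges, the value $f$ can be missed only by ``jumping over'' it across one swap; that forces a rigid local picture --- an $(m-1)$-set $B$ for which no vertex outside $B$ has exactly $f-e(G[B])$ neighbours in $B$, although outside vertices of both smaller and larger $B$-degree exist --- and a stability analysis, legitimate since $n$ is large, should show this can persist only when $G$ is essentially a disjoint union of at most $r$ cliques, a complete at-most-$r$-partite graph, or a further ``balanced'' extremal graph, for each of which one checks the conclusion by hand. The existence of the required sparse and dense $m$-sets is exactly where the identities $f=\binom a2$ and $f=\binom m2-\binom b2$ enter (with $f=c(m-c)$ playing the analogous role for an intermediate $m$-set), and the length of the safe interval is controlled by the Erd\H{o}s-type extremal function for ``no $m$-set of size $\ge f+1$,'' which for these special $f$ one can estimate well enough to obtain length $(\tfrac1r-o(1))\binom n2$. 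The main obstacle here is the stability step: proving the ``jump over $f$'' obstruction is genuinely brittle, so that it cannot occur for a positive-density set of edge-numbers.

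For the upper bound $\sigma(m,f)\le 1/r$ the plan is to build, for all but a $(\tfrac1r+o(1))$-fraction of the integers $e\in\{0,1,\dots,\binom n2\}$, an $n$-vertex graph with $e$ edges and no induced $m$-set of size $f$. The prototype is a disjoint union $K_{n_1}\sqcup\cdots\sqcup K_{n_r}$ with each $n_i\ge m$ and $\sum n_i=n$: an $m$-subset meets the cliques in parts $a_1,\dots,a_r$ summing to $m$ and induces $\sum\binom{a_i}2$ edges, which by the minimality of $r+1$ in \eqref{equ:r} can never equal $f$. Its number of edges is $\sum\binom{n_i}2$, and the substitution $y_i=2n_i-1$ rewrites this as $8e+r=\sum y_i^2$ with $\sum y_i=2n-r$ and $y_i$ odd, so the realisable values of $e$ are governed by how densely $8e+r$ can be written as a sum of $r$ such constrained squares with prescribed base-sum --- a number-theoretic problem about sums of squares, equivalently sums of triangular numbers. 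One then enlarges the family by passing to complements (complete at-most-$r$-partite graphs, which avoid $(m,f)$ through the dual value $\binom m2-f=\binom b2$), by exploiting the bipartite pattern behind $f=c(m-c)$, and by perturbing clique sizes to fill the gaps, and shows the union of all resulting edge-numbers has density at least $1-1/r-o(1)$. Establishing this covering statement is the crux, and it is precisely where $r\ge 9$ enters: the available sum-of-squares representations become dense enough to close the interval-covering argument only once $r$ is this large --- the smaller values of $r$ being exactly the range of the further progress (and the two remaining open cases) in the present paper.
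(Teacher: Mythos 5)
This theorem is quoted from \cite{EFRS}; the paper you were given does not prove it, and the only piece of its proof that is visible here is the upper-bound mechanism for $r\ge 9$: Lemma~\ref{lem:relation} (a union of at most $r$ cliques has no induced $m$-set with $f$ edges because $f\in C(m,r+1)\setminus C(m,r)$, so $S_n(m,f)\cap C(n,r)=\emptyset$) combined with the Brueggeman--Hildebrand result that $\bigl[\frac{n^2}{2r}+c_rn,\frac{n^2-n}{2}-c_rn^{3/2}\bigr]\subseteq C(n,r)$ for $r\ge 9$. Your upper-bound half does match this route in its core (cliques only, with the edge-counts of $C(n,r)$ recast as a constrained sums-of-squares problem), but the density of $C(n,r)$ is precisely the hard number-theoretic content and you only assert it; moreover your proposed ``enlargement'' of the family is partly wrong: complete multipartite graphs do \emph{not} in general avoid induced $(m,f)$ --- on the contrary, since $\binom{m}{2}-f=\binom{b}{2}$, any complete multipartite graph with at least $m-b+1$ parts, one of size at least $b$, contains an induced $m$-set with exactly $f$ edges. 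Fortunately those extra constructions are also unnecessary, since $|S_n(m,f)|\le \binom{n}{2}-|C(n,r)|$ already gives $\sigma(m,f)\le\frac1r$ once $|C(n,r)|=\frac{n^2}{2}-\frac{n^2}{2r}+o(n^2)$.

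The genuine gap is in your lower-bound half, in two ways. First, the calibration is off: because $S_n(m,f)$ is disjoint from $C(n,r)$, and $C(n,r)$ essentially fills $[\frac{n^2}{2r},\binom{n}{2}]$, a forcing set of density $\frac1r$ cannot sit in a ``middle interval avoiding a short list of extremal edge-numbers''; it must essentially be the entire sparse range $e\lesssim \frac{n^2}{2r}$, whose length is dictated by the minimality of $r$ (the minimum of $C(n,r)$ is about $\frac{n^2}{2r}$ by Cauchy--Schwarz), not by an Erd\H{o}s-type extremal function for ``no $m$-set with more than $f$ edges''. Second, and more seriously, the engine is missing: the one-vertex-swap bound $|e(G[A])-e(G[A'])|\le m-1$ gives no integer intermediate-value principle, and the claim that ``jumping over $f$'' forces a rigid local picture which, by stability, globalizes to a union of at most $r$ cliques or a complete multipartite graph \emph{is} the hard content of the theorem, not a routine step. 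The hypotheses $f=\binom{a}{2}=\binom{m}{2}-\binom{b}{2}=c(m-c)$ are there exactly to defeat, one by one, the sparse avoidance structures (a large clique plus isolated vertices, an unbalanced complete bipartite graph, a small clique completely joined to a large independent set), and a proof must pair each representation with a structural classification of graphs having no induced $m$-set of exactly $f$ edges, as is done in \cite{EFRS}. As written, your proposal leaves both crucial steps --- the forcing/structural argument for $\sigma\ge\frac1r$ and the near-interval structure of $C(n,r)$ for $\sigma\le\frac1r$ when $r\ge 9$ --- unproved, so it is a plan rather than a proof.
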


Erd\H{o}s, F\"uredi, Rothschild and S\'os \cite{EFRS} further conjectured that the inequality in Theorem~\ref{Thm:Non zeros} should be an equality for any $r$.\footnote{See the paragraph before the proof of Theorem 3 in \cite{EFRS}.}
Note that for any integers $m,f$ from Theorem \ref{Thm:Non zeros},
we have $(m,f)\notin \mathcal{A}$ and thus by Theorem~\ref{Thm:EFRS}, any integer $r$ chosen from \eqref{equ:r} must satisfy $r\geq 2$.
We summarize this as the following.

\begin{conjecture}[\cite{EFRS}]\label{Conj:equzlity}
Let $m,f$ be integers from Theorem~\ref{Thm:Non zeros} and let $r\geq 2$ be from \eqref{equ:r}.
Then $\sigma(m,f)=\frac1r$.
\end{conjecture}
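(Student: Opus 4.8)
The plan is to prove the matching upper bound $\sigma(m,f)\leq \frac1r$, since Theorem~\ref{Thm:Non zeros} already gives the lower bound $\sigma(m,f)\geq \frac1r$ and settles the case $r\geq 9$; thus it remains only to handle $2\leq r\leq 8$. The essential task is to exhibit, for each $n$, a family of $n$-vertex graphs $G$ whose edge counts cover a $(1-\tfrac1r-o(1))$-fraction of $\{0,1,\dots,\binom n2\}$ and such that none of these graphs contains an induced $m$-vertex subgraph with exactly $f$ edges. Following the approach of Erd\H{o}s, F\"uredi, Rothschild and S\'os, I would build $G$ as a blow-up-type construction: partition $[n]$ into a bounded number of parts and put complete or empty bipartite graphs between them, so that any induced $m$-set has an edge count lying in a sparse, predictable residue/arithmetic structure, and then count how large the resulting ``forbidden'' set of global edge counts $e$ can be made. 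The three hypotheses $f=\binom a2=\binom m2-\binom b2=c(m-c)$ are exactly what let the relevant extremal configurations (a clique, a co-clique, a balanced complete bipartite graph on $m$ vertices) realize $f$ and hence be excluded from the construction, while the minimality of $r$ in \eqref{equ:r} controls how densely the induced counts are packed.

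Concretely, the key steps are: (1) Reduce to the upper bound for $2\leq r\leq 8$, invoking Theorems~\ref{Thm:EFRS} and \ref{Thm:Non zeros}. (2) For a given large $n$, choose a partition of the vertex set into $r$ ``flexible'' blocks together with some fixed-size gadget blocks, arranged so that in any induced $m$-vertex subgraph $H$ the number of edges $e(H)$ is forced to avoid the value $f$; here I would use the representation \eqref{equ:r} of $f$ as $\sum_{i=1}^{r+1}\binom{x_i}{2}$ with $\sum x_i=m$ to understand precisely which $m$-vertex induced counts are attainable, and show that with $r$ parts one can always ``miss'' $f$ while with $r-1$ one cannot. (3) As the sizes of the $r$ flexible blocks range over all admissible tuples, compute the set of global edge counts $e(G)$; show this set, taken over the whole family, is an interval-like set of density $1-\frac1r-o(1)$ in $[0,\binom n2]$, so that $|S_n(m,f)|\leq \frac1r\binom n2 + o(n^2)$. (4) Take $\limsup$ to conclude $\sigma(m,f)\leq\frac1r$, and combine with the lower bound.

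The main obstacle I expect is step (3) for the small values $r\in\{2,\dots,8\}$: the constructions in \cite{EFRS} that establish $\sigma(m,f)=\frac1r$ for $r\geq 9$ presumably rely on having enough ``room'' (many blocks) to make the attainable induced counts dense enough that an extremal argument goes through, and when $r$ is small this slack disappears. Making the construction work for small $r$ is likely where number theory enters — one needs the set of values $\{e(G)\}$ over the family to be a genuinely dense subset of $[0,\binom n2]$, which translates into a statement about sums of the form $\sum \binom{n_i}{2}$ (and cross terms $\sum n_in_j$) covering almost all residues and magnitudes, i.e.\ a covering/additive-combinatorics question about quadratic expressions. I anticipate that $r=2$ (and perhaps one other small value) will be the genuinely delicate case, possibly requiring a separate ad hoc construction, which would explain why the final theorem leaves two cases open; for the remaining $r$ the argument should follow the \cite{EFRS} template with the additive/number-theoretic input supplied more carefully.
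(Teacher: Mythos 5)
Your high-level strategy for the upper bound is the right one and matches the paper's Lemma~\ref{lem:relation}: exhibit, for each $n$, graphs with no induced $m$-vertex subgraph on $f$ edges whose edge counts have density $1-\frac1r$, namely disjoint unions of at most $r$ cliques, using that the minimality of $r$ in \eqref{equ:r} means exactly $f\in C(m,r+1)\setminus C(m,r)$. But two points in your plan are genuine gaps. First, your construction should not allow complete bipartite joins between parts: since $f=c(m-c)$, any join between two parts of size at least $\max\{c,m-c\}$ already contains an induced $m$-set with exactly $f$ edges, so only the union-of-cliques case survives (the hypotheses $f=\binom a2=\binom m2-\binom b2=c(m-c)$ serve the \emph{lower} bound of Theorem~\ref{Thm:Non zeros}, not the avoidance argument). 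Second, and more seriously, the entire difficulty is the step you defer: proving that $C(n,r)$ fills the interval $\bigl[\frac{n^2}{2r}+o(n^2),\binom n2\bigr]$, i.e.\ $|C(n,r)|=\frac{n^2}{2}-\frac{n^2}{2r}+o(n^2)$. The paper's contribution (Theorem~\ref{Thm:Key Thm}) is precisely this: an elementary argument via Gauss's three-squares theorem for $r\ge 7$, and a circle-method argument for $r\ge 5$; your proposal contains no mechanism for it, and the statement is simply false for $r=2$ (there $|C(n,2)|=O(n)$, since only $O(n)$ partitions into two parts exist) and remains unresolved for $r=3,4$ — which are in fact the open cases, not $r=2$.

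For $r=2$ the paper takes a route entirely absent from your plan: it does not build any avoiding family at all, but proves the general bound $\sigma(m,f)\le\frac12$ for every $(m,f)\notin\mathcal A$ (Theorem~\ref{Thm:main}), by combining the structural estimates of Lemma~\ref{lem:P} (forcing $f=\binom{\ell}{2}=\frac12\binom m2$ with $m$ a perfect square) with Bennett's theorem that $2\binom x2=\binom{y^2}2$ has only the solution $(3,4)$; the case $r=2$ of the conjecture then follows by pairing this with the lower bound of Theorem~\ref{Thm:Non zeros}. So your proposal would need both the number-theoretic density theorem for $C(n,r)$ (for $r\ge5$) and a completely different argument for $r=2$ before it could close any of the cases the paper actually settles.
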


This remains open for $2\leq r\leq 8$.
It is worth pointing out that in addition to powerful results in extremal graph theory, the proof of each of the above two theorems in \cite{EFRS} used tools from number theory.

In this paper we provide improvements on the above two results of \cite{EFRS}.
We would like to emphasize that part of our proofs is based on elementary and analytic methods in number theory.
Our first result answers the question of \eqref{equ:1/2} in the affirmative.
In addition, we show that the constant $1/2$ cannot be lower and thus is sharp.

\begin{thm}\label{Thm:main}
Any pair $(m,f)\notin \mathcal{A}$ satisfies $\sigma(m,f)\leq \frac12$.
On the other hand, there are infinitely many pairs $(m,f)$ with $\sigma(m,f)=\frac12.$
\end{thm}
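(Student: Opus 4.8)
The plan is to prove the two halves separately. For the upper bound $\sigma(m,f)\le\frac12$ for all $(m,f)\notin\mathcal{A}$, I would start from the toolkit used to prove Theorem~\ref{Thm:EFRS}: that result already rules out $\sigma(m,f)>\frac23$ outside $\mathcal A$, so the task is to push the bound from $\frac23$ down to $\frac12$. The $\frac23$ bound presumably comes from a handful of ``bad'' configurations --- typically pairs $(m,f)$ where $f$ is very close to $0$ or to $\binom m2$, or where $f$ sits symmetrically so that both a sparse and a dense extremal construction block an interval of edge-counts of length roughly $\frac23\binom n2$. The first step is therefore to isolate exactly which finitely many pairs $(m,f)\notin\mathcal A$ can have $\sigma(m,f)\in(\frac12,\frac23]$, if any, and then to handle each by an ad hoc construction. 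The natural constructions are the ones from \cite{EFRS}: take a graph that is a union of a clique (or Tur\'an graph) on part of the vertex set with an independent set or a sparse graph on the rest, and vary the split; the number of edges sweeps out a long interval $I$ of integers $e$ for which no induced $m$-set has exactly $f$ edges, and one shows $|I|\ge(\frac12-o(1))\binom n2$ forces, together with a complementary construction, that $S_n(m,f)$ misses at least half of $\{0,\dots,\binom n2\}$. The key point is a \emph{continuity/interval} argument: as one vertex is moved between the clique part and the sparse part, or as one edge is added, the set of attainable $f$-values on $m$-subsets changes in a controlled way, so one gets whole intervals of ``forbidden'' $e$.

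For the second half --- infinitely many pairs with $\sigma(m,f)=\frac12$ --- the natural source is Conjecture~\ref{Conj:equzlity} with $r=2$, i.e.\ Theorem~\ref{Thm:Non zeros} applied to pairs $(m,f)$ for which the minimum $r$ in the representation \eqref{equ:r} equals $2$. For such pairs Theorem~\ref{Thm:Non zeros} gives $\sigma(m,f)\ge\frac12$, and combined with the first half of the present theorem (which gives $\sigma(m,f)\le\frac12$ since these pairs are not in $\mathcal A$) we would get equality for free. So the real content is: exhibit infinitely many pairs $(m,f)$ that satisfy the hypotheses of Theorem~\ref{Thm:Non zeros} (there are positive integers $a,b,c$ with $f=\binom a2=\binom m2-\binom b2=c(m-c)$) \emph{and} for which $f$ cannot be written as a sum of two triangular numbers $\binom{x_1}2+\binom{x_2}2$ with $x_1+x_2=m$ but can be written as such a sum of three. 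I would search for a clean parametric family: for instance try $m=2k$, $c=k-t$ for suitable $t$, forcing $f=c(m-c)=(k-t)(k+t)=k^2-t^2$, then impose $k^2-t^2=\binom a2$ and $k^2-t^2=\binom{2k}2-\binom b2=k(2k-1)-\binom b2$, which is a system of Pell-type / quadratic Diophantine conditions. The delicate part is the non-representability as two triangular numbers with the prescribed sum constraint; here one argues by a congruence obstruction (triangular numbers and their sums lie in restricted residue classes modulo small primes, and $8\binom x2+1=(2x-1)^2$ turns sums of two triangular numbers into sums of two odd squares, so one can invoke the two-squares theorem / local obstructions at primes $\equiv3\pmod4$).

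The main obstacle, and the step I expect to consume most of the work, is precisely this last Diophantine construction: simultaneously satisfying three equalities $f=\binom a2=\binom m2-\binom b2=c(m-c)$ is already a constraint cutting out (conjecturally sparse but genuinely infinite) solution sets, and layering on top of it the requirement that $r=2$ --- i.e.\ $f$ is a sum of three triangular numbers summing to $m$ but not two --- requires controlling both an \emph{existence} statement (three-term representation, which is the easy direction since every $m$ with the right size can be split, and three triangular numbers are very flexible by Gauss's Eureka theorem) and a \emph{non-existence} statement (no two-term representation with sum $m$). I would handle non-existence by reducing, via $x\mapsto 2x-1$, to: the specific integer $8f+2$ is not a sum of two squares $(2x_1-1)^2+(2x_2-1)^2$ with $(2x_1-1)+(2x_2-1)=2m-2$; the sum constraint pins the two squares to $((m-1)+s)^2+((m-1)-s)^2=2(m-1)^2+2s^2$, so one needs $4f+1-(m-1)^2\ne s^2$ to have a solution --- wait, more carefully, $8f+2=2(m-1)^2+2s^2$ forces $s^2=4f+1-(m-1)^2$, so it suffices to choose the parameters so that $4f-(m-1)^2+1$ is \emph{not} a perfect square, a single explicit non-square condition that one can guarantee along the parametric family by a congruence (e.g.\ arrange $4f-(m-1)^2+1\equiv2\pmod4$ or $\equiv3\pmod4$ or a quadratic non-residue mod a fixed prime). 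Pinning down a fully explicit infinite family satisfying all of these at once, and verifying the $r=2$ (not $r=1$, not $r\ge3$) bookkeeping, is the crux; everything else follows by assembling Theorems~\ref{Thm:Non zeros}, \ref{Thm:EFRS} and the first half of the present theorem.
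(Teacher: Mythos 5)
Your second half is on the right track and matches the paper's strategy: combine the lower bound from Theorem~\ref{Thm:Non zeros} with $r=2$ and the upper bound from the first half, so that the real work is an explicit infinite family with minimal $r=2$ satisfying $f=\binom a2=\binom m2-\binom b2=c(m-c)$; your reduction of the ``no two-term representation'' condition to ``$4f+1-(m-1)^2$ is not a perfect square'' is correct (the paper instead kills two-term representations by a convexity/Jensen bound). But you stop exactly at what you yourself call the crux: no explicit family is produced, and producing one is the actual content. The paper takes $m=5t+2$, $f=\binom{3t+1}{2}$, for which the three-term representation is the identity $\binom{3t+1}{2}=\binom{2t+1}{2}+\binom{2t+1}{2}+\binom t2$, the condition $f=\binom m2-\binom b2$ holds with $b=4t+2$, and the condition $f=c(m-c)$ reduces to $7t^2+14t+4$ being a perfect square, i.e.\ to the Pell equation $x^2-7y^2=-3$ with $y=t+1$, which has infinitely many solutions of the required parity. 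Without some such concrete parametrization and the accompanying Diophantine analysis, the second half is a plan, not a proof.

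The more serious gap is in the first half. You assume that improving $\frac23$ to $\frac12$ amounts to isolating ``finitely many'' exceptional pairs in $(\frac12,\frac23]$ and handling each by an ad hoc construction, but that finiteness is itself the theorem and nothing in your sketch delivers it: for \emph{every} $m$, the necessary conditions from \cite{EFRS} (Lemma~\ref{lem:P}(ii)) leave candidate values $f$ near $m^2/4$ that must be excluded, so one needs an argument uniform in $m$, not a finite case check plus interval-sweeping constructions. The paper's actual mechanism is quite different: assuming $\sigma(m,f)>\frac12$ and $m\ge 8$, it combines the representation bounds of Lemma~\ref{lem:P}(iii),(iv) to force $f=\binom{\ell}{2}$, then uses complementation (Lemma~\ref{lem:P}(i)) and the pinching $\lfloor\frac{(m-1)^2}{4}\rfloor\le f\le\lfloor\frac{m^2}{4}\rfloor$ to force $2\binom{\ell}{2}=\binom m2$, then uses Lemma~\ref{lem:P}(v) ($f=x(m-x)$) to force $m$ to be a perfect square, and finally invokes Bennett's theorem (Lemma~\ref{Lem:Bennett}) that $2\binom x2=\binom{y^2}{2}$ has the unique positive solution $(3,2)$, contradicting $m\ge8$. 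This number-theoretic finishing step is indispensable and is absent from your proposal; the constructions and continuity arguments you describe give no way to rule out the critical values $f\approx\frac12\binom m2$ for all large $m$.
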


The proof of Theorem~\ref{Thm:main} uses bounds on $\sigma(m,f)$ established by Erd\H{o}s et. al in \cite{EFRS}.
As a corollary, Theorem~\ref{Thm:main}, together with the lower bound of Theorem~\ref{Thm:Non zeros},
implies the verification of Conjecture~\ref{Conj:equzlity} for the case $r=2$.
Our second result confirms more cases of Conjecture~\ref{Conj:equzlity}.

\begin{thm}\label{Thm:second thm}
Conjecture~\ref{Conj:equzlity} holds whenever $r=2$ or $r\geq 5$. 
\end{thm}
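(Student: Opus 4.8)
The plan is to establish the two ranges $r = 2$ and $r \ge 5$ separately, in both cases combining the lower bound $\sigma(m,f) \ge \frac1r$ from Theorem~\ref{Thm:Non zeros} (which already holds unconditionally) with a matching upper bound. For $r = 2$, the strategy is immediate: Theorem~\ref{Thm:main} gives $\sigma(m,f) \le \frac12$ for every pair $(m,f) \notin \mathcal{A}$, and as noted in the excerpt every pair arising in Theorem~\ref{Thm:Non zeros} lies outside $\mathcal{A}$. Hence $\frac12 \le \sigma(m,f) \le \frac12$, settling this case. The range $r \ge 9$ is already part of Theorem~\ref{Thm:Non zeros}, so the real content is the window $r \in \{5,6,7,8\}$, where one must push the upper-bound argument of Erd\H os et.\ al.\ further than they did.

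For $r \ge 5$, I would revisit the upper-bound machinery behind the ``moreover'' clause of Theorem~\ref{Thm:Non zeros}. The structure there is: one shows that for a positive-density set of values $e$, any $n$-vertex graph with $e$ edges admits an induced $m$-vertex subgraph with $f$ edges, and the density of the ``bad'' values of $e$ (those for which a construction avoids the target count) is controlled by how efficiently $f$ can be represented in the form \eqref{equ:r}. Concretely, the complement constructions use blow-ups of small graphs together with perturbations by $\binom{x_i}{2}$-type corrections; the number of independent ``slots'' one can move a target $f$-edge count through is governed by $r$, and the total measure of reachable $e$ is therefore roughly $\frac1r \binom n2$. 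The task is to verify that for $r = 5,6,7,8$ the same counting goes through — i.e.\ that the error terms coming from the number-theoretic input (writing integers near $f$ as sums of triangular numbers with a prescribed vertex budget) remain $o(\binom n2)$, rather than only doing so once $r$ is large enough to absorb the losses. I expect this to reduce to a clean lemma: for every integer $t$ in a suitable interval, $t$ has a representation $t = \sum_{i=1}^{r+1}\binom{x_i}{2}$ with $\sum x_i = m$ and the $x_i$ not too unbalanced. Such statements follow from the fact that consecutive triangular numbers have gaps $O(\sqrt t)$, so greedily subtracting the largest admissible $\binom{x_i}{2}$ leaves a controlled remainder; the minimality of $r$ in \eqref{equ:r} plus the hypotheses $f = \binom a2 = \binom m2 - \binom b2 = c(m-c)$ pin down the relevant interval.

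The main obstacle will be the boundary cases $r = 5$ and (to a lesser extent) $r = 6$, where the slack between the lower and upper bounds is tightest: the greedy triangular-number argument loses a constant fraction of candidate values $e$ near the two ends of the feasible range, and one must show these losses are genuinely negligible in the $\limsup$ rather than merely small. I would handle this by a more careful two-sided analysis — sharpening the representation lemma near its endpoints using the explicit quadratic formula for $\binom{x}{2}$, and, if needed, supplementing the blow-up constructions with an auxiliary family (e.g.\ near-regular perturbations) that covers precisely the residual values. This is also where the cases $r = 3, 4$ break down and remain open: there the constant-fraction loss is comparable to $\frac1r$ itself, so the upper bound one obtains is strictly larger than $\frac1r$ and the methods here do not close the gap.
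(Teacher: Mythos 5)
Your $r=2$ case is correct and is exactly the paper's argument: combine the upper bound of Theorem~\ref{Thm:main} with the lower bound of Theorem~\ref{Thm:Non zeros}. The problem is the range $r\ge 5$, where what you give is a plan rather than a proof, and the plan misses the actual mechanism. The paper's upper bound does not come from re-examining error terms in the ``moreover'' clause of Theorem~\ref{Thm:Non zeros}; it comes from a clean reduction (Lemma~\ref{lem:relation}): let $C(n,r)$ be the set of edge counts of $n$-vertex graphs that are disjoint unions of at most $r$ cliques. Every induced subgraph of such a graph is again a union of at most $r$ cliques, and by the minimality of $r$ in \eqref{equ:r} one has $f\in C(m,r+1)\setminus C(m,r)$, so $S_n(m,f)\cap C(n,r)=\emptyset$ and $\sigma(m,f)\le 1-\liminf|C(n,r)|/\binom n2$. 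Thus everything reduces to the purely number-theoretic statement $|C(n,r)|=\frac{n^2}{2}-\frac{n^2}{2r}+o(n^2)$, i.e.\ that almost every integer in (roughly) $[\frac{n^2}{2r},\frac{n^2}{2}]$ is a sum of exactly $r$ triangular numbers $\binom{n_i}{2}$ subject to the exact constraint $\sum_i n_i=n$. Note also that your ``clean lemma'' is stated with the wrong parameters ($r+1$ parts summing to $m$, about the target $f$) rather than the relevant one ($r$ parts summing to $n$, about the ambient edge count $e$).

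The genuine gap is your proposed proof of that representation statement. A greedy argument using the $O(\sqrt t)$ gaps between consecutive triangular numbers only lands you within $O(\sqrt{\,\cdot\,})$ of the target; with a bounded number of parts and a fixed total $\sum n_i=n$ there is no room to absorb the remainder, and hitting the value exactly is the whole difficulty. This is precisely why Erd\H{o}s et al.\ could only handle $r\ge 9$ (via Brueggeman--Hildebrand), why the paper's elementary proof for $r\ge 7$ has to convert the problem into a sum of three squares and dodge the excluded residues $4^a(8b+7)$ of Gauss's theorem, and why $r=5,6$ require a full circle-method analysis: major/minor arc estimates for the quadratic form $x_1^2+\cdots+x_4^2+(x_1+\cdots+x_4-n)^2$, a lower bound $G(m)\gg 1/\log\log n$ on the singular series, and a bound $O(n^{2-1/50})$ on the exceptional set. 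None of this is reachable by sharpening a greedy triangular-number argument ``near the endpoints.'' Relatedly, your diagnosis of why $r=3,4$ remain open (a constant-fraction greedy loss) is not the real obstruction; the obstruction is arithmetic --- e.g.\ the paper remarks that for $r=4$ the density statement can be pushed through only for odd $n$.
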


This is mainly built on a key concept introduced in \cite{EFRS} (see \eqref{equ:Cnr} in Section~\ref{sec:conj1.3}), which is related to number theory.
We will present two proofs of Theorem~\ref{Thm:second thm}, one for $r\geq 7$ using elementary arguments and another for $r\geq 5$ using analytic arguments.
We defer a more detailed discussion on Theorem~\ref{Thm:second thm} to Section~\ref{sec:conj1.3}.

The rest of the paper is organized as follows.
In Section~2, we collect some results from the literature.
In Section~3, we prove Theorem~\ref{Thm:main}.
In Section~4, we prove Theorem~\ref{Thm:second thm}.
In Section~5, we consider a conjecture of Erd\H{o}s et. al \cite{EFRS} on the existence of $\lim_{n\to \infty} |S_n(m,f)|/\binom{n}{2}$ and conclude with some remarks.

\section{Preliminaries}



We now prepare some results needed in later sections.
The following lemma on $\sigma(m,f)$ is collected from Sections 4 and 8 of \cite{EFRS} (see the equations (4.1), (4.4), (8.1), (8.3), (8.4) and (8.6) therein, respectively).

\begin{lem}[\cite{EFRS}]\label{lem:P}
\begin{itemize}
\item [(i).] $\sigma(m,f)=\sigma\big(m,\binom{m}{2}-f\big)$. 
\item [(ii).] If $\sigma(m,f)>\frac12$, then $\lfloor\frac{(m-1)^2}{4}\rfloor\leq f\leq \lfloor\frac{m^2}{4}\rfloor$.
\item [(iii).] Write $f=\binom{b}{2}-b'$ for integers $b,b'$ with $0\le b'< b-1$. If $\frac12 b<b'<b-1,$ then $\sigma(m,f)\le \frac12.$
\item [(iv).] Write $f=\binom{\ell}{2}+\ell'$ for integers $\ell, \ell'$ with $0\le \ell'<\ell<m$. If $\ell'\ge m-\ell$, then $\sigma(m,f)=0.$
\item [(v).] Let $D(m)$ denote the set of integers $xy+z$, where $x,y,z$ are nonnegative integers satisfying that $x+y\le m$ and if $z\ge 1$ then $x+y+z\le m-1$.
If $\sigma(m,f)>\frac12,$ then $f\in D(m)$.
\end{itemize}
\end{lem}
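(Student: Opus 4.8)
Since Lemma~\ref{lem:P} merely compiles statements established by Erd\H{o}s, F\"uredi, Rothschild and S\'os, strictly speaking its proof is a matching of each item with the corresponding displayed formula of \cite{EFRS} cited above; for the reader's convenience I sketch the ideas and indicate where the work lies. Part (i) is the one item with a short self-contained argument: for an $n$-vertex graph $G$, passing to the complement $\overline{G}$ sets up a bijection between the $m$-vertex induced subgraphs of $G$ with $f$ edges and those of $\overline{G}$ with $\binom{m}{2}-f$ edges, while $e(\overline{G})=\binom{n}{2}-e(G)$. Hence $(n,e)\rightarrow(m,f)$ if and only if $(n,\binom{n}{2}-e)\rightarrow(m,\binom{m}{2}-f)$, so $S_n(m,f)$ and $S_n\big(m,\binom{m}{2}-f\big)$ are reflections of one another inside $\{0,1,\dots,\binom{n}{2}\}$ and in particular have equal size; dividing by $\binom{n}{2}$ and taking $\limsup$ yields the identity. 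I would record this first, since the remaining items are always used together with the symmetry $f\leftrightarrow\binom{m}{2}-f$ it provides.

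Parts (ii), (iii) and (v) are necessary conditions for $\sigma(m,f)>\tfrac12$, and I would prove all three by contraposition. Assuming $f$ fails the stated arithmetic restriction, the goal is to produce, for at least half of the integers $e\in\{0,1,\dots,\binom{n}{2}\}$, an $n$-vertex graph with exactly $e$ edges that contains no induced $m$-vertex subgraph with exactly $f$ edges, so that these $e$ lie outside $S_n(m,f)$, whence $\sigma(m,f)\le\tfrac12$. The graphs used are highly structured --- disjoint unions of cliques, complete multipartite graphs, and graphs obtained from these by planting or deleting a clique or an independent set (split-type constructions), possibly together with a sparse graph on a few extra vertices to interpolate the edge count. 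For such graphs the list of edge-counts realized by $m$-vertex induced subgraphs is short and explicit (e.g.\ only the triangular numbers $\binom{j}{2}$ arise in $K_a\cup\overline{K}_{n-a}$; only the sums $\sum_i\binom{j_i}{2}$ with $\sum_i j_i=m$ arise in a disjoint union of cliques; only values $xy+z$ of the shape occurring in $D(m)$ arise in the relevant ``bipartite-plus-few-edges'' construction). Varying the parameters along a path that changes $e$ by one at each step covers a prescribed density of $e$'s, and one then checks that, precisely when $f<\lfloor(m-1)^2/4\rfloor$ or $f>\lfloor m^2/4\rfloor$ (for (ii)), when $f\notin D(m)$ (for (v)), or when $f=\binom{b}{2}-b'$ with $\tfrac12 b<b'<b-1$ (for (iii)), the forbidden value $f$ never occurs along the whole path.

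Part (iv) is of the same flavor but with the stronger conclusion $\sigma(m,f)=0$, so one must avoid $f$ for all but $o\big(\binom{n}{2}\big)$ values of $e$; this forces a richer, essentially two-parameter family, for instance a clique on $\ell$ vertices together with an almost arbitrary graph on the remaining $n-\ell$ vertices, arranged so that every $m$-set meeting the clique in $j$ vertices spans either fewer than $\binom{j}{2}+(m-j)$ edges or at least that many, and hence, when $f=\binom{\ell}{2}+\ell'$ with $\ell'\ge m-\ell$, never exactly $f$. This is essentially Construction~8 of \cite{EFRS}, which is exactly why ``most'' pairs $(m,f)$ have $\sigma(m,f)=0$.

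In every one of parts (ii)--(v) the real obstacle is the bookkeeping: one must simultaneously control the exact edge count of each graph in the family --- so that the realized $e$'s have the right density --- and the complete list of edge-counts of its $m$-vertex induced subgraphs --- so as to be certain $f$ is absent --- and these two demands conflict, since enriching a construction to hit more values of $e$ tends to enlarge the set of realizable $f$. Tuning the constructions so that the set of ``robustly avoidable'' $f$ matches the stated conditions exactly (the floor functions in (ii), the set $D(m)$ in (v), the inequalities on $b'$ and $\ell'$ in (iii) and (iv)) is the delicate point; this is carried out in Sections~4 and~8 of \cite{EFRS}, and in the present paper we simply invoke their conclusions.
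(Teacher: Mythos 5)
Your proposal matches the paper's treatment: the paper gives no proof of this lemma, simply citing equations (4.1), (4.4), (8.1), (8.3), (8.4) and (8.6) in Sections 4 and 8 of \cite{EFRS}, which is exactly what you do for parts (ii)--(v), and your self-contained complementation argument for part (i) is correct and is the standard justification of that identity. No discrepancy to report.
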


The following concentration inequality can be found in \cite{GIKM} (see its Corollary 2.2).

\begin{lem}[\cite{GIKM}]\label{Lem:Concentration}
Let $\binom{[N]}{n}$ be the set of $n$-subset of $\{1,2,...,N\}$ and let $h:\binom{[N]}{n}\rightarrow\mathbb{R}$ be a given function.
Let $C$ be a uniformly random element of $\binom{[N]}{n}.$
Suppose that there exists $\alpha>0$ such that
$|h(A)-h(A')|\le \alpha$ for any $A,A'\in\binom{[N]}{n}$ with $|A\cap A'|=n-1.$ Then for any real $t>0,$
$P(|h(C)-\mathbb{E}[h(C)]|\ge t)\le 2\exp\Big(-\frac{2t^2}{\min\{n,N-n\}\alpha^2}\Big).$
\end{lem}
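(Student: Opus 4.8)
\medskip

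\noindent\textbf{Proof proposal for Lemma~\ref{Lem:Concentration}.}
The plan is to prove this bounded-differences inequality for sampling without replacement through a Doob martingale followed by the Azuma--Hoeffding estimate, which is the standard route for concentration ``on the slice''. First I would reduce to the case $n\le N-n$: if instead $n>N-n$, replace $h$ by the function $h'$ on $\binom{[N]}{N-n}$ given by $h'(A)=h([N]\setminus A)$. Since $|A\cap A'|=(N-n)-1$ holds exactly when the complements meet in $n-1$ elements, $h'$ inherits the same $\alpha$-Lipschitz hypothesis, and $h'(C^c)$ has the same distribution as $h(C)$ when $C^c$ is a uniformly random $(N-n)$-subset. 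So from now on I may assume $\min\{n,N-n\}=n$.

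Next I would realize $C$ via a uniformly random permutation $\pi$ of $[N]$ by setting $C=\{\pi(1),\dots,\pi(n)\}$, and define the Doob martingale $X_i=\mathbb{E}\big[h(C)\mid \pi(1),\dots,\pi(i)\big]$ for $0\le i\le n$, so that $X_0=\mathbb{E}[h(C)]$ and $X_n=h(C)$. The core estimate is that, conditional on $\mathcal{F}_{i-1}=\sigma(\pi(1),\dots,\pi(i-1))$, all the quantities $g(a):=\mathbb{E}\big[h(C)\mid \mathcal{F}_{i-1},\ \pi(i)=a\big]$, as $a$ ranges over the not-yet-revealed elements, lie in a common interval of length at most $\alpha$. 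To see this, fix distinct candidates $a,b$ and couple the completions $\pi(i{+}1),\dots,\pi(N)$ in the two conditional worlds by using the same arrangement, except that wherever $b$ sits in the world $\pi(i)=a$ we place $a$ in the world $\pi(i)=b$: if in this arrangement $b$ would occupy a position $>n$, then $C$ changes only by exchanging $a$ for $b$, so the two sets meet in $n-1$ elements and the values of $h$ differ by at most $\alpha$; if $b$ would occupy a position in $\{i{+}1,\dots,n\}$, then $C$ is literally the same set in both worlds. Averaging over the coupling yields $|g(a)-g(b)|\le\alpha$, and since $X_{i-1}$ is a convex combination of the $g(a)$'s it lies in the same interval; hence $X_i-X_{i-1}$, conditioned on $\mathcal{F}_{i-1}$, has conditional mean zero and takes values in an interval of length at most $\alpha$.

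Finally I would apply Hoeffding's lemma to each increment, obtaining $\mathbb{E}\big[e^{\lambda(X_i-X_{i-1})}\mid\mathcal{F}_{i-1}\big]\le e^{\lambda^2\alpha^2/8}$ for all $\lambda\in\mathbb{R}$; iterating along the filtration gives $\mathbb{E}\big[e^{\lambda(X_n-X_0)}\big]\le e^{n\lambda^2\alpha^2/8}$, and Markov's inequality with the optimal $\lambda=4t/(n\alpha^2)$ gives $P(X_n-X_0\ge t)\le \exp\big(-2t^2/(n\alpha^2)\big)$. Applying the same bound to $-h$ and taking a union bound over the two tails produces the claimed inequality, with the leading constant $2$ and $\min\{n,N-n\}$ in the exponent. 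The one genuinely delicate point is the coupling in the one-step estimate: one must check carefully that swapping two elements in a uniformly random completion alters $C$ by at most a single element, so that the $\alpha$-Lipschitz hypothesis applies; once this is in place the remainder is the textbook Azuma--Hoeffding computation. \QED
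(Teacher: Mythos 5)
Your proposal is correct. Note, however, that the paper does not prove this lemma at all: it is imported verbatim as Corollary 2.2 of the cited reference [GIKM], so there is no in-paper argument to compare against. What you give is the standard self-contained proof ``on the slice'': reduce to $n\le N-n$ by passing to complements (and your check that $|A\cap A'|=(N-n)-1$ corresponds exactly to the complements meeting in $n-1$ elements is the right one), expose $\pi(1),\dots,\pi(n)$ in a Doob martingale, and control each increment by the swap coupling. The two delicate points are handled correctly: the transposition coupling is measure-preserving and changes $C$ by at most one element (or not at all, when $b$ lands in a position in $\{i+1,\dots,n\}$), so all conditional values $g(a)$ lie in a common interval of length at most $\alpha$; and since $X_{i-1}$ is a convex combination of the $g(a)$'s, each increment is mean zero with range at most $\alpha$, so Hoeffding's lemma gives the factor $e^{\lambda^2\alpha^2/8}$ per step and hence the sharp constant $2t^2/(\min\{n,N-n\}\alpha^2)$ claimed in the statement, rather than the weaker constant one would get from the crude bound $|X_i-X_{i-1}|\le\alpha$. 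The optimization $\lambda=4t/(n\alpha^2)$ and the two-sided union bound are routine and correct.
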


We also need two lemmas from number theory. Let $\dN$ denote the set of non-negative integers.

\begin{lem}[Bennett, see \cite{EFRS}]\label{Lem:Bennett}
The equation $2\binom{x}{2}=\binom{y^2}{2}$ has a unique solution $(x,y)=(3,2)$ in positive integers.
\end{lem}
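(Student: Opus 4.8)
The plan is to prove that the only positive integer solution of $2\binom{x}{2}=\binom{y^2}{2}$ is $(x,y)=(3,2)$ by reducing it to a Pell-type Diophantine equation. First I would clear denominators: the equation $2\binom{x}{2}=\binom{y^2}{2}$ becomes $2x(x-1)=y^2(y^2-1)$, i.e. $2x^2-2x = y^4-y^2$. Completing the square in $x$ gives $2(2x-1)^2 = (2x-1)^2\cdot 2 = \ldots$; more precisely, multiply by $4$ to get $8x^2-8x = (2x-1)^2\cdot 2 - 2$, so setting $u=2x-1$ we obtain $2u^2-2 = 4y^4-4y^2 = (2y^2-1)^2-1$. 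Writing $v=2y^2-1$, this is $2u^2 - v^2 = 1$, a negative Pell equation whose solutions $(u,v)$ are completely understood: they form a single family generated by $u+v\sqrt2$ being an odd power of the fundamental unit $1+\sqrt2$, equivalently $(u_k,v_k)$ with $u_1=1,v_1=1$ and the recurrence $u_{k+1}=3u_k+2v_k$, $v_{k+1}=4u_k+3v_k$.

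The key remaining step is to determine for which solutions $(u,v)$ of $2u^2-v^2=1$ the quantity $v$ has the special shape $v=2y^2-1$, i.e. $v+1=2y^2$ is twice a perfect square. So I would list the sequence $v_1=1, v_2=7, v_3=41, v_4=239, v_5=1393,\dots$ and ask when $(v_k+1)/2$ is a perfect square: $(1+1)/2=1=1^2$ works (but gives $u=1$, hence $x=1$, a degenerate case with $\binom{x}{2}=0$), $(7+1)/2=4=2^2$ works and gives $v=7$, $y^2=4$, $y=2$, and from $2u^2=v^2+1=50$ we get $u=5$, $x=3$ — the asserted solution. For $k\ge 3$ one must show $(v_k+1)/2$ is never a perfect square. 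This is the genuine number-theoretic obstacle: it is a ``double Pell'' / simultaneous-equation condition, and the standard tool is to analyze the sequence $w_k := (v_k+1)/2$ modulo a well-chosen modulus so that squares are excluded for all $k\ge 3$. Because the $v_k$ satisfy a linear recurrence, the sequence $w_k$ is eventually periodic modulo any fixed $N$, so I would search for an $N$ (likely a small prime power, or a product of a few such) for which the residues $w_k \bmod N$ attained for $k\ge 3$ avoid all quadratic residues mod $N$, while the cases $k=1,2$ (which do land on squares) are handled separately by the explicit computation above.

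If a single modulus does not suffice to kill all $k\ge 3$ at once, the fallback is a two-stage argument: first split into residue classes of $k$ modulo the period $p$ of $(v_k)$ modulo $N$, and for each bad class $k\equiv j \pmod p$ choose a second modulus $N_j$ that rules out squares on that subsequence; this finitely many moduli argument is routine once the periods are computed. An alternative, cleaner route I would consider is to substitute directly and work with $2x^2-2x=y^2(y^2-1)$ by viewing it as $x^2-x-\tfrac12 y^2(y^2-1)=0$ and requiring the discriminant $1+2y^2(y^2-1)=(2y^2-1)^2-1+2 = \ldots$ — wait, $1+2y^2(y^2-1) = 2y^4-2y^2+1$ — to be a perfect square $s^2$, so $s^2 = 2y^4-2y^2+1$; then $2s^2 = 4y^4-4y^2+2 = (2y^2-1)^2+1$, which is again $2s^2-(2y^2-1)^2=1$, the same Pell equation with $s=u$, confirming consistency. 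Either way the heart of the matter is the elimination of $k\ge 3$ via congruences. Since this lemma is attributed to Bennett in the excerpt, I would expect the actual proof in the literature to use exactly such a congruence obstruction (or Bennett's sharp results on the number of solutions to $\binom{n}{2}=\binom{m}{k}$-type equations), and I would present the Pell reduction in full detail and then invoke the modular exclusion, flagging the choice of modulus as the one computational point requiring care.
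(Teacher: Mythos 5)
The paper does not prove this lemma at all --- it is quoted as a result of Bennett (via \cite{EFRS}), precisely because it is a genuinely hard Diophantine statement. Your reduction is fine as far as it goes: writing $u=2x-1$, $v=2y^2-1$ turns $2\binom{x}{2}=\binom{y^2}{2}$ into $2u^2-v^2=1$, whose solutions $(u_k,v_k)=(1,1),(5,7),(29,41),(169,239),\dots$ are generated as you say, and the problem becomes showing that $w_k:=(v_k+1)/2=1,4,21,120,697,\dots$ is a perfect square only for $k=1,2$ (equivalently, finding all integer points on $u^2=2y^4-2y^2+1$). You correctly identify this as ``the heart of the matter''.

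The gap is that the method you propose for that heart --- a fixed modulus $N$, or finitely many fixed moduli split according to the residue of $k$ modulo the periods --- provably cannot work. The recurrence $w_{k+2}=6w_{k+1}-w_k-2$ is invertible, so $(w_k)$ is \emph{purely periodic} modulo any fixed $N$. Hence if $N_1,\dots,N_s$ are your moduli with periods $p_1,\dots,p_s$ and $L=\mathrm{lcm}(p_1,\dots,p_s)$, then every $k\equiv 1\pmod{L}$ has $w_k\equiv w_1=1\pmod{N_i}$ for all $i$, and every $k\equiv 2\pmod{L}$ has $w_k\equiv w_2=4$: these are squares modulo every modulus, so no finite system of congruences can rule out squareness along those infinitely many $k\ge 3$. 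This is the standard obstruction whenever a binary recurrence actually contains square terms; the genuine proofs of such statements (Cohn--Ljunggren style arguments with moduli that are themselves terms of the sequence and Jacobi-symbol identities, descent/elliptic-curve computations, or Bennett's work on these quartics) use tools of a different nature, which is exactly why the paper cites Bennett rather than arguing directly. So your proposal, as written, does not yield a proof; to be honest it should either reproduce such an argument or, like the paper, invoke the known result. (A small side point you handled correctly: $(x,y)=(1,1)$ also satisfies the equation with both sides $0$, so the lemma is really about the nondegenerate solution; this does not affect its use in the paper, where $m\ge 8$.)
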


\begin{lem}[Gauss]\label{Lem:Gauss-Legendre}
	Define $\dG=\{x^2+y^2+z^2: ~\forall x,y,z\in\dN\}.$
	Then we have $\dN\backslash\dG=\{4^a(8b+7): a,b\in\dN\}.$
\end{lem}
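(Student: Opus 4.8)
The statement is the classical Gauss--Legendre three-square theorem, so the plan is to reconstruct one of the standard proofs rather than to invent anything new. The easy direction is to show $\dN\backslash\dG \supseteq \{4^a(8b+7)\}$, i.e.\ that no integer of the form $4^a(8b+7)$ is a sum of three squares. For $a=0$ this is a congruence argument: a square is $\equiv 0,1,4 \pmod 8$, and one checks that no three of these residues sum to $7 \pmod 8$. For the descent step, suppose $4n = x^2+y^2+z^2$; reducing mod $4$ and again using that squares are $\equiv 0,1 \pmod 4$ forces $x,y,z$ all even, so $n = (x/2)^2+(y/2)^2+(z/2)^2$ as well. Iterating strips off all factors of $4$ and reduces everything to the case $8b+7$, which is impossible mod $8$. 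This gives one inclusion cleanly.

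The substantive direction is $\dN\backslash\dG \subseteq \{4^a(8b+7)\}$: every $n$ not of that form is a sum of three squares. The cleanest route I would take is via the theory of ternary quadratic forms together with Dirichlet's theorem on primes in arithmetic progressions. First reduce to the squarefree case: if $n = 4^a m$ with $m$ squarefree and $m \not\equiv 7 \pmod 8$, it suffices to represent $m$, since multiplying a representation by $2^a$ handles the rest; also one reduces to $n$ squarefree by pulling out square factors. Next, for squarefree $m$ with $m \equiv 1,2,3,5,6 \pmod 8$, one shows the genus of the form $x^2+y^2+z^2$ represents $m$ by a local (Hasse--Minkowski) computation at every prime, and then invokes the deep input that for this particular form the genus consists of a single class, so local representability implies global representability. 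The auxiliary step that makes the single-class argument run is to produce, via Dirichlet, an auxiliary prime in a suitable congruence class so that a binary form in the appropriate genus represents it, and then glue.

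The main obstacle is exactly this last direction: a fully self-contained proof requires either the single-class fact for $x^2+y^2+z^2$ (equivalently, information about the class number of the associated ternary lattice) or Dirichlet's theorem on primes in arithmetic progressions, neither of which is elementary. In the write-up I would therefore state that the nontrivial inclusion is a classical theorem and either cite a standard reference (e.g.\ Serre's \emph{A Course in Arithmetic}) or sketch the Minkowski--Hasse plus genus-theory argument at the level of detail above, while giving the congruence and $4$-descent argument for the trivial inclusion in full since it is short. An alternative, if one wants to stay closer to elementary methods, is to cite the three-squares theorem directly as Lemma~\ref{Lem:Gauss-Legendre} is presented — attributed to Gauss — and not reprove it at all, which is the conventional choice and the one I would recommend here given that the lemma is only an input to the graph-theoretic arguments.
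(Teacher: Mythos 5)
Your sketch of the classical three-square theorem is correct (the mod~$8$ plus $4$-descent argument for the easy inclusion, and genus theory with Dirichlet for the hard one), and your final recommendation matches the paper exactly: the paper offers no proof at all, simply citing the result as Gauss's theorem and using it as an external input to the combinatorial arguments. So your proposal is consistent with the paper's treatment, with the added (correct but unnecessary) outline of how one would prove it from scratch.
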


\section{Proof of Theorem \ref{Thm:main}}
Recall that $\mathcal{A}= \{(2,0),(2,1),(4,3),(5,4),(5,6)\}$.
Our first goal is to show that any pair $(m,f)\notin \dA$ satisfies $\sigma(m,f)\leq \frac12$.
Suppose for a contradiction that there exists some pair $(m,f)\notin \dA$ with $\sigma(m,f)>\frac12$.

By Lemma~\ref{lem:P} (ii), we have
\begin{equation}\label{Equ:f}
\left\lfloor\frac{(m-1)^2}{4}\right\rfloor\le f\le \left\lfloor\frac{m^2}{4}\right\rfloor.
\end{equation}

We first assert that $m\geq 8$.
Considering $m=7$, by \eqref{Equ:f} it suffices to consider $9\leq f\leq 12$.
By Lemma~\ref{lem:P} (i), we see $\sigma(7,10)=\sigma(7,11)$ and $\sigma(7,9)=\sigma(7,12)$.
Using Lemma~\ref{lem:P} (iii) (with $f=7,b=6,b'=4$), it follows that $\sigma(7,10)=\sigma(7,11)\leq \frac12$;
on the other hand, Lemma~\ref{lem:P} (iv) (with $f=12,\ell=5,\ell'=2$) implies that $\sigma(7,9)=\sigma(7,12)=0$.
The cases for $m\leq 6$ can be similarly verified as follows.
By \eqref{Equ:f}, Lemma~\ref{lem:P} (i) and the fact $(m,f)\notin \mathcal{A}$, it suffices to consider:
$\sigma(3,2)$ for $m\leq 3$, $\sigma(4,2)=\sigma(4,4)$ for $m=4$, $\sigma(5,5)$ for $m=5$, and
$\sigma(6,6)=\sigma(6,9)$ and $\sigma(6,7)=\sigma(6,8)$ for $m=6$.
Then one can apply Lemma~\ref{lem:P} (iv) to show that each of these above pairs $(m,f)$ satisfies $\sigma(m,f)=0$.
So we have $m\geq 8$.

From now on, we express $m\in \{2k, 2k+1\}$ for some integer $k\geq 4$,
and write $f$ in the form of $f=\binom{\ell}{2}+\ell'$ for the unique integers $\ell, \ell'$ with $0\le \ell'<\ell$.
Since $f<\binom{m}{2}$, we have $\ell<m$.
By Lemma~\ref{lem:P} (iv), we can derive that $0\le \ell'\le m-\ell-1$.

We claim that $\ell'=0$ and thus $f=\binom{\ell}{2}$.
Suppose on the contrary that $\ell'\geq 1$.
We can also write $f$ in the form $f=\binom{\ell+1}{2}-(\ell-\ell')$ where $0<\ell-\ell'<\ell$.
By Lemma~\ref{lem:P} (iii),
if $\frac{\ell+1}{2}<\ell-\ell'<\ell$, then $\sigma(m,f)\leq \frac12$, a contradiction.
Thus, we have $\ell-\ell'\leq \frac{\ell+1}{2}$.
This together with $\ell'\le m-\ell-1$ implies that $\ell\leq \frac{2m-1}{3}$.
We discuss according to the parity of $m$.
First, let us consider when $m=2k$.
Then \eqref{Equ:f} implies $k^2-k\le f=\binom{\ell}{2}+\ell'\leq \binom{\ell}{2}+2k-\ell-1$, and solving this, we can obtain
$$\frac{3+\sqrt{8k^2-24k+17}}{2}\le \ell \le \frac{2m-1}{3}=\frac{4k-1}{3}.$$
Rearranging both sides, it gives $(k-1)(k-4)\leq 0$ and thus $1\leq k\leq 4$.
Note that $k\geq 4$.
So we have $k=4$ and further, we can derive that $m=8, \ell=5, \ell'=2$ and $f=12$.
Since $16=\binom{7}{2}-5$, by Lemma~\ref{lem:P} (i) and (iii), $\sigma(m,f)=\sigma(8,12)=\sigma(8,16)\le\frac12$, a contradiction.
Hence, we may assume that $m=2k+1$.
In this case, \eqref{Equ:f} infers that $k^2\le f=\binom{\ell}{2}+\ell'\leq \binom{\ell}{2}+2k-\ell$,
which implies that
$$\frac{3+\sqrt{8k^2-16k+9}}{2}\leq \ell\leq \frac{2m-1}{3}=\frac{4k+1}{3}.$$
The above inequality gives $k=2$, a contradiction to that $k\geq 4$. This proves the claim that $\ell'=0$.

By \eqref{Equ:f}, we have $k^2-k\leq f=\binom{\ell}{2}\leq k^2$ for $m=2k$ and $k^2\leq f=\binom{\ell}{2}\leq k^2+k$ for $m=2k+1$.
This leads to
\begin{align*}
\sqrt{2}k-1<\frac{1+\sqrt{8k^2-8k+1}}{2}\leq \ell\leq \frac{1+\sqrt{8k^2+1}}{2}<\sqrt{2}k+1 \mbox{ ~for } m=2k, \mbox{ and } \\
\sqrt{2}k<\frac{1+\sqrt{8k^2+1}}{2}\leq \ell\leq \frac{1+\sqrt{8k^2+8k+1}}{2}<\sqrt{2}k+2 \mbox{ ~for } m=2k+1.
\end{align*}
Since $\sqrt2 k\notin \mathbb{N}$ for $k\in \mathbb{N}\backslash \{0\}$,
we see that
\begin{equation}\label{Equ:ell}
\ell\in \{\lfloor \sqrt{2}k\rfloor, \lfloor \sqrt{2}k\rfloor+1\} \mbox{ for } m=2k
\mbox{ and } \ell\in \{\lfloor \sqrt{2}k\rfloor+1, \lfloor \sqrt{2}k\rfloor+2\} \mbox{ for } m=2k+1.
\end{equation}

Next we prove that $f=\binom{\ell}{2}=\frac12\binom{m}{2}$.
Suppose on the contrary that $f':=\binom{m}{2}-f \neq f$. Since $\sigma(m,f')=\sigma(m,f)>\frac12$,
repeating the above arguments, we can conclude that $f'=\binom{g}{2}$, where $g$ is a positive integer instead of $\ell$ satisfying \eqref{Equ:ell}.
Since $\lfloor\frac{(m-1)^2}{4}\rfloor\le f\neq f'\le \lfloor\frac{m^2}{4}\rfloor$,
it follows that $|f-f'|\leq \lfloor\frac{m^2}{4}\rfloor-\lfloor\frac{(m-1)^2}{4}\rfloor= k$.
As $g\neq \ell$ and both $g, \ell$ satisfy \eqref{Equ:ell},
we can derive that $$k\ge |f-f'|=\left|\binom{\ell}{2}-\binom{g}{2}\right|\ge\min\{\ell,g\}\ge \left\lfloor\sqrt2k\right\rfloor,$$
which is a contradiction for $k\geq 4$.
This proves that $2\binom{\ell}{2}=2f=\binom{m}{2}$, as desired.

Finally we show that $m$ must be a perfect square. By Lemma~\ref{lem:P} (v), we see $f\in D(m)$.
That is, $f$ can be written as $f=xy+z$ for nonnegative integers $x,y$ and $z$ satisfying that $x+y\le m$ and if $z\ge 1$ then $x+y+z\le m-1$.
If $z\ge1$, as $m\ge 8$,
we can infer that $f=xy+z\le \lfloor\frac{(m-1-z)^2}{4}\rfloor+z\le \lfloor\frac{(m-2)^2}{4}\rfloor+1<\lfloor\frac{(m-1)^2}{4}\rfloor$,
a contradiction to \eqref{Equ:f}.
Thus we have $z=0$ and $f=xy$ where $x+y\le m$.
If $x+y\le m-1$, then we have $f=xy\le\lfloor\frac{(m-1)^2}{4}\rfloor$ which contradicts that $f=\frac12\binom{m}{2}$.
So we must have $f=x(m-x)$ for some nonnegative integer $x$.
Solving $x(m-x)=f=\frac12\binom{m}{2}$, we get $x=\frac12(m\pm\sqrt m)$ which implies that $m$ is a perfect square.

Therefore, we have $2\binom{\ell}{2}=\binom{m}{2}$ where $m$ is a perfect square.
By Lemma \ref{Lem:Bennett}, this equation has the unique solution $(\ell,m)=(3,4)$ in positive integers,
which contradicts that $m\ge 8$.
This completes the proof of the first assertion of Theorem\ref{Thm:main} that any $(m,f)\notin \dA$ satisfies $\sigma(m,f)\leq \frac12$.

\medskip

To show the second assertion of Theorem \ref{Thm:main}, we construct an infinite sequence of pairs $(m,f)$ with $\sigma(m,f)=\frac12.$
In view of the first assertion, it is enough to show infinitely many pairs $(m,f)$ with $\sigma(m,f)\geq \frac12.$
Using Theorem \ref{Thm:Non zeros}, we can further reduce to find infinitely many $(m,f)$ satisfying the following properties:
\begin{itemize}
\item [(A).] $f$ can be expressed as $f=\binom{a}{2}=\binom{m}{2}-\binom{b}{2}=c(m-c)$ for some positive integers $a,b,c$,
\item [(B).] $f$ can be expressed as $f=\binom{x_1}{2}+\binom{x_2}{2}+\binom{x_3}{2}$ for integers $x_i\geq 1$ with $x_1+x_2+x_3=m$, and
\item [(C).] $f$ cannot be expressed as $f=\binom{y_1}{2}+\binom{y_2}{2}$ for integers $y_i\geq 1$ with $y_1+y_2=m$.
\end{itemize}
To do so, we will make use of the coming two equations, which can be easily verified for any integer $t\geq 1$:
$$\binom{5t+2}{2}=\binom{3t+1}{2}+\binom{4t+2}{2} \mbox{ ~and~ } \binom{3t+1}{2}=\binom{2t+1}{2}+\binom{2t+1}{2}+\binom{t}{2}.$$
We define $m=5t+2$ and $f=\binom{3t+1}{2}$.
By the above equations, we see that (B) automatically holds for such $(m,f)$, and one can choose integers $a=3t+1$ and $b=4t+2$ in (A).
Next, we show that such $(m,f)$ also satisfies (C);
as otherwise, by Jensen's inequality, we can derive that for any $t\geq 1$,
$$f=\binom{y_1}{2}+\binom{5t+2-y_1}{2}\ge\binom{\lfloor2.5t\rfloor+1}{2}+\binom{\lceil2.5t\rceil+1}{2}>\binom{3t+1}{2}=f,$$
a contradiction.
We are left to find a positive integer $c$ such that $\binom{3t+1}{2}=c(5t+2-c)$, which implies $c=\frac12(5t+2-\sqrt{7t^2+14t+4}).$
Note that for even integers $t\geq 1$, if $\sqrt{7t^2+14t+4}$ is an integer, then $c$ must be a positive integer.
Hence, it suffices to find infinitely many even integers $t\geq 1$ such that $\sqrt{7t^2+14t+4}$ is an integer.
By letting $x=\sqrt{7t^2+14t+4}$ and $y=t+1$,
our task is now to find infinitely many positive integer solutions $(x,y)$ to the Pell's equation
\begin{equation}\label{Equ:Pell}
x^2-7y^2=-3,
\end{equation}
where $x$ is even and $y$ is odd.
Note that $(x,y)=(2,1)$ is a positive solution to \eqref{Equ:Pell}.
We also observe that
$(x+y\sqrt7)(8+3\sqrt7)=(8x+21y)+(3x+8y)\sqrt7$ and if $(x,y)$ is a positive integer solution to \eqref{Equ:Pell}, then so is $(8x+21y,3x+8y)$.
Combining these facts together, we obtain infinite positive integer solutions $(x_k,y_k)$ to \eqref{Equ:Pell}, where $x_k+y_k\sqrt7=(2+\sqrt7)(8+3\sqrt7)^k$ for all $k\geq 0$.
Using the recurrences that $x_{k+1}=8x_k+21y_k$ and $y_{k+1}=3x_k+8y_k$,
it is easy to see that $x_{2k}$ is even and $y_{2k}$ is odd.

To give an explicit formula for the above construction, let $t_{k}=y_{2k}-1$ and we can derive that
$$t_{k}=\left(\begin{array}{c} 0 \\
1\end{array}\right)^T
\left(\begin{array}{cc} 8 & 21\\
3 & 8\end{array}\right)^{2k}
\left(\begin{array}{c} 2\\
1\end{array}\right)-1.$$
Now let $m_k=5t_{k}+2$ and $f_k=\binom{3t_{k}+1}{2}$.
By the above analysis, we have $\sigma(m_k,f_k)=\frac12$ for all $k\geq 1$.
This finishes the proof of Theorem \ref{Thm:main}.\QED

\section{Proof of Theorem~\ref{Thm:second thm}}\label{sec:conj1.3}
In this section, we complete the proof of Theorem~\ref{Thm:second thm}.
First, let us prove the case $r=2$. 

\medskip

\noindent {\bf Proof of Theorem~\ref{Thm:second thm} for $r=2$}.
Let $m,f$ be integers from Theorem~\ref{Thm:Non zeros} with $r=2$.
Then $(m,f)\notin \mathcal{A}$ and by Theorem \ref{Thm:main}, we have $\sigma(m,f)\le\frac12$.
By Theorem \ref{Thm:Non zeros}, we get $\sigma(m,f)\ge\frac12$. Thus $\sigma(m,f)=\frac12$.
 \QED

\bigskip

To prove other cases of Theorem \ref{Thm:second thm}, we introduce the following concept given in \cite{EFRS}.
Let
\begin{equation}\label{equ:Cnr}
C(n,r)=\left\{\sum\limits_{i=1}^{r}\binom{n_i}{2}:~ \sum\limits_{i=1}^{r}n_i=n \mbox{~~and~~}  n_i\in \dN  \mbox{~~for~~} 1\le i\le r\right\}.
\end{equation}
So $C(n,r)$ consists of all possible numbers of edges in an $n$-vertex graph formed by at most $r$ cliques.
A direct application of the Cauchy-Schwarz inequality shows that the minimum element in $C(n,r)$ is at least $n^2/2r-n/2$, thus implying $|C(n,r)|\leq  \frac{n^2}{2}-\frac{n^2}{2r}$.
The following result was given in \cite{EFRS} implicitly, which reveals the importance of $C(n,r)$ for Conjecture~\ref{Conj:equzlity},
that is, if $C(n,r)$ is almost full, then Conjecture~\ref{Conj:equzlity} holds for such $r$.

\begin{lem}[\cite{EFRS}]\label{lem:relation}
Let $r\geq 2$. If $|C(n,r)|=\frac{n^2}{2}-\frac{n^2}{2r}+o(n^2)$, then Conjecture~\ref{Conj:equzlity} holds for the case $r$.
\end{lem}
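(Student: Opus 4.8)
The plan is to prove the upper bound $\sigma(m,f)\le\frac1r$ from the hypothesis on $|C(n,r)|$; since Theorem~\ref{Thm:Non zeros} already gives $\sigma(m,f)\ge\frac1r$, the two together yield $\sigma(m,f)=\frac1r$, which is Conjecture~\ref{Conj:equzlity} for this $r$. The only structural fact I need is that $f\notin C(m,r)$. To see this, suppose $f=\sum_{i=1}^{r}\binom{n_i}{2}$ with $n_i\in\dN$ and $\sum_{i=1}^{r}n_i=m$; discarding the indices with $n_i=0$ then writes $f$ as a sum of at most $r$ triangular numbers $\binom{n_i}{2}$ with all $n_i\ge1$ and arguments summing to $m$. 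This contradicts the choice of $r$ in \eqref{equ:r} as the least integer for which $f$ admits such a representation using $r+1$ parts, because being least with $r+1$ parts forces that no representation with $r$ or fewer parts exists.

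Next I would set up the standard clique construction. Fix $n\ge m$ and take any $e\in C(n,r)$, say $e=\sum_{i=1}^{r}\binom{a_i}{2}$ with $a_i\in\dN$ and $\sum_{i=1}^{r}a_i=n$. Let $G_e$ be the vertex-disjoint union of $r$ cliques of orders $a_1,\dots,a_r$, an $n$-vertex graph with exactly $e$ edges. Every induced subgraph of $G_e$ on $m$ vertices uses $b_i$ vertices of the $i$-th clique with $0\le b_i\le a_i$ and $\sum_{i=1}^{r}b_i=m$, hence spans $\sum_{i=1}^{r}\binom{b_i}{2}\in C(m,r)$ edges. Since $f\notin C(m,r)$, no induced $m$-vertex subgraph of $G_e$ has exactly $f$ edges, so $(n,e)\not\rightarrow(m,f)$ and thus $e\notin S_n(m,f)$. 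Therefore $S_n(m,f)\cap C(n,r)=\emptyset$, and as $S_n(m,f)\subseteq\{0,1,\dots,\binom{n}{2}\}$ we get $|S_n(m,f)|\le\binom{n}{2}+1-|C(n,r)|$.

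Finally I would insert the hypothesis $|C(n,r)|=\frac{n^2}{2}-\frac{n^2}{2r}+o(n^2)$ together with $\binom{n}{2}=\frac{n^2}{2}+O(n)$ to obtain $|S_n(m,f)|\le\frac{n^2}{2r}+o(n^2)$, whence $\sigma(m,f)=\limsup_{n\to\infty}|S_n(m,f)|/\binom{n}{2}\le\frac1r$, and conclude as above. I do not expect a genuine obstacle inside this lemma, since the argument is essentially the one left implicit in \cite{EFRS}: the only points needing care are the membership statement $f\notin C(m,r)$ — which is exactly where the minimality of $r$ enters and reflects the dichotomy that $r+1$ cliques can realize $f$ edges on $m$ vertices while $r$ cliques cannot — and the routine asymptotics. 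The real difficulty lies not here but in verifying the hypothesis, namely that $|C(n,r)|$ meets the Cauchy--Schwarz bound $\frac{n^2}{2}-\frac{n^2}{2r}$ up to $o(n^2)$; this is the number-theoretic heart of the paper and is handled separately for $r\ge5$.
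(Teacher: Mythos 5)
Your proposal is correct and matches the paper's own proof: both show $S_n(m,f)\cap C(n,r)=\emptyset$ by taking the disjoint-clique graph realizing $e\in C(n,r)$, noting every induced $m$-vertex subgraph has edge count in $C(m,r)$ while $f\notin C(m,r)$ by the minimality of $r$ in \eqref{equ:r}, and then combining the resulting bound $\sigma(m,f)\le\frac1r$ with the lower bound of Theorem~\ref{Thm:Non zeros}. Your explicit verification that $f\notin C(m,r)$ (discarding zero parts) is a detail the paper leaves implicit, but it is the same argument.
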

\begin{proof}
Let $m,f$ be integers from Theorem~\ref{Thm:Non zeros} and let $r\geq 2$ be from \eqref{equ:r}.
Suppose $|C(n,r)|=\frac{n^2}{2}-\frac{n^2}{2r}+o(n^2)$.
Take any $e\in C(n,r)$. Then there exists an $n$-vertex graph $G$ with $e$ edges formed by at most $r$ cliques.
Clearly, any $m$-vertex subgraph of $G$ is a subgraph consisting of at most $r$ cliques.
By the choice of $(m,f)$, $f\in C(m,r+1)\backslash C(m,r).$
This shows that any $m$-vertex induced subgraph of $G$ cannot have $f$ edges.
So $e\notin S_n(m,f)$.
That also says, $S_n(m,f)\cap C(n,r)=\emptyset$.
Therefore, we have
$\sigma(m,f)\le \limsup_{n\to \infty} \left(1-|C(n,r)|/\binom{n}{2}\right)= \limsup_{n\to \infty}\left(\frac{1}{r}+o(1)\right)=\frac1r,$
and thus the equality holds for the case $r$ of Conjecture~\ref{Conj:equzlity}.
\end{proof}

Brueggeman and Hildebrand (unpublished, see \cite{EFRS}) showed that there exists a constant $c_r>0$ such that
\begin{equation*}\label{Equ:B-H equ}
\left[\frac{n^2}{2r}+c_rn,\frac{n^2-n}{2}-c_rn^{3/2}\right]\subseteq C(n,r) \mbox{ ~for each~ }r\ge 9.
\end{equation*}
This, together with Lemma~\ref{lem:relation}, was applied by Erd\H{o}s et. al in \cite{EFRS} to derive the equality $\sigma(m,f)=\frac{1}{r}$ for $r\geq 9$ in Theorem \ref{Thm:Non zeros}.
By the above discussion, to complete the proof of Theorem \ref{Thm:second thm}, it suffices to prove the following.
\begin{thm}\label{Thm:Key Thm}
	Let $r\ge 5$. 
	Then we have $|C(n,r)|=\frac{n^2}{2}-\frac{n^2}{2r}+o(n^2).$
\end{thm}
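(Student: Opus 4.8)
The plan is to show that for each fixed $r \ge 5$, the set $C(n,r)$ of attainable edge-counts of $n$-vertex graphs built from at most $r$ cliques covers all but an $o(n^2)$-sized subset of the interval $\left[\frac{n^2}{2r}, \binom{n}{2}\right]$. Since the Cauchy--Schwarz bound already gives $|C(n,r)| \le \frac{n^2}{2}-\frac{n^2}{2r}$, it suffices to produce the matching lower bound on $|C(n,r)|$, i.e. to show that $C(n,r)$ contains $\frac{n^2}{2}-\frac{n^2}{2r}-o(n^2)$ distinct integers. The core of the argument is an \emph{additive} statement: the function $g:\dN^r \to \dN$ given by $g(n_1,\dots,n_r)=\sum_{i=1}^r \binom{n_i}{2}$, restricted to the simplex $\sum n_i = n$, hits almost every integer in its range.

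First I would parametrize differently. Reserve two of the $r$ clique-sizes, say $n_{r-1}$ and $n_r$, as ``fine-tuning'' coordinates and let the remaining $r-2 \ge 3$ coordinates be the ``coarse'' part. Writing $s = n_{r-1}+n_r$ fixed and $n_{r-1}=q$, $n_r=s-q$, the quantity $\binom{q}{2}+\binom{s-q}{2}$ ranges, as $q$ runs over $0 \le q \le s/2$, over a set of roughly $s/2$ consecutive values spaced by about $1$ near $q=0$ but by about $s$ near $q=s/2$; more useful is that for \emph{three} free clique sizes $x+y+z = S$ the sumset $\left\{\binom{x}{2}+\binom{y}{2}+\binom{z}{2}\right\}$ is already an interval minus a sparse set, because the differences $\binom{q+1}{2}-\binom{q}{2}=q$ run through all residues. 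Concretely, I would invoke the two identities highlighted in the proof of Theorem~\ref{Thm:main}, namely $\binom{3t+1}{2}=\binom{2t+1}{2}+\binom{2t+1}{2}+\binom{t}{2}$ and its relatives, or more systematically prove a lemma: for $S$ large, $C(S,3)$ contains every integer in $\left[\frac{S^2}{6}+O(S),\binom{S}{2}\right]$ except for $O(S^{3/2})$ of them. This is exactly the $r=3$ ``almost-interval'' fact; combined with the trivial observation $C(n,r)\supseteq C(n-a,r-1)+\binom{a}{2}$ for any $0 \le a \le n$, one bootstraps from $r=3$ up to all $r \ge 5$ (the jump from $r=3$ to $r=5$ giving the two extra cliques needed to push the left endpoint of the covered interval down from $\sim n^2/6$ to $\sim n^2/(2r)$). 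The split point will be chosen as roughly: put $r-3$ cliques each of size $\approx n/r$ to account for the bulk, and use the remaining three cliques of total size $\approx 3n/r$ to sweep out a dense interval; translating this interval by the discrete choices of the bulk sizes fills in the range.

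Here is the step order I would follow. (1) Prove the $r=3$ density lemma: $\left|C(S,3) \cap \left[\tfrac{S^2}{6}, \binom{S}{2}\right]\right| = \binom{S}{2}-\tfrac{S^2}{6}-o(S^2)$. The cleanest route is an analytic/character-sum or a direct combinatorial counting: count pairs $(x,y)$ with $x+y \le S$ and show $\binom{x}{2}+\binom{y}{2}+\binom{S-x-y}{2}$ takes $\Omega(S^2)$ distinct values by a second-moment / Jensen argument on the number of representations — an integer with many representations is rare, and the total number of (input)$\to$(output) pairs is $\sim S^2/6$ over a range of size $\sim S^2/3$, so generic fibers have bounded size. (The paper's Lemma~\ref{Lem:Gauss-Legendre} on sums of three squares is the likely technical engine: completing squares, $8\left(\binom{x}{2}+\binom{y}{2}+\binom{z}{2}\right)+3 = (2x-1)^2+(2y-1)^2+(2z-1)^2$, so $C(S,3)$ with $x+y+z=S$ is essentially the set of $N$ with $8N+3$ a sum of three \emph{odd} squares summing appropriately — and by Gauss--Legendre this excludes only $N$ with $8N+3 \equiv 7 \pmod 8$-type obstructions, a positive-density but \emph{structured} exclusion that one must then kill using the fourth and fifth cliques to shift parities/residues.) (2) Lift to $r \ge 5$ via $C(n,r) \supseteq \bigcup_{a_1+\dots+a_{r-3} = n-S}\left(\left\{\sum_{i}\binom{a_i}{2}\right\}+C(S,3)\right)$, choosing $S \asymp 3n/r$ and the $a_i \asymp n/r$; the translates of the $r=3$ near-interval, over the many choices of the $a_i$, tile the target interval $\left[\frac{n^2}{2r}+O(n), \binom{n}{2}-O(n^{3/2})\right]$ up to $o(n^2)$ gaps. (3) Combine with the upper bound $|C(n,r)| \le \frac{n^2}{2}-\frac{n^2}{2r}$ to conclude, and feed this into Lemma~\ref{lem:relation}.

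The main obstacle is step (1), and specifically the residue obstruction: unlike genuine sums of three \emph{unrestricted} squares (Gauss--Legendre, Lemma~\ref{Lem:Gauss-Legendre}), here we need sums of three squares of \emph{prescribed parity} (odd) with a size constraint, and a naive application of Lemma~\ref{Lem:Gauss-Legendre} leaves out an arithmetic-progression-worth of values — a \emph{positive} proportion, which is fatal if not repaired. The repair is exactly where $r \ge 5$ (rather than $r \ge 4$) enters: the two extra clique sizes give enough freedom to adjust the total modulo small powers of $2$ and to absorb the $4^a(8b+7)$ exceptional set, and making this absorption quantitative — showing the exceptions contribute only $o(n^2)$ after the extra degrees of freedom are used — is the delicate part. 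I would also expect to need a separate argument (or the known Brueggeman--Hildebrand-type estimate restricted to a sub-range) to handle the boundary near $\binom{n}{2}$, where clique sizes become unbalanced and the sum-of-squares heuristic degrades; there the relevant statement is the complementary one, that $\binom{n}{2}-C(n,r)$ misses few small integers, which follows from Lemma~\ref{lem:P}(i)-style duality or a direct greedy construction peeling off one large clique at a time.
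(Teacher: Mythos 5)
Your step (1) --- the $r=3$ density lemma --- is false, and it is the load-bearing step of your whole plan. The number of partitions of $S$ into at most three nonnegative parts is $\sim S^2/12$, so $|C(S,3)|\le S^2/12+O(S)$; but the interval $\bigl[\tfrac{S^2}{6},\binom{S}{2}\bigr]$ that you want $C(S,3)$ to almost fill has length $\sim S^2/3$. Hence $C(S,3)$ necessarily misses at least roughly three quarters of that interval --- a positive proportion, not $O(S^{3/2})$ or $o(S^2)$. The second-moment heuristic you sketch cannot rescue this: even if every fiber of $(x,y,z)\mapsto\binom{x}{2}+\binom{y}{2}+\binom{z}{2}$ on the simplex $x+y+z=S$ had bounded size, the total supply of unordered triples is only $\sim S^2/12$. (Incidentally, the parity/mod-$8$ worry you flag is not the real obstruction: $8N+3\equiv 3\ (\mathrm{mod}\ 8)$ is never of the form $4^a(8b+7)$, so three odd squares always exist --- this is Gauss's three-triangular-number theorem; what kills the three-clique version is precisely the linear constraint $x+y+z=S$, which caps the number of available representations.) Once step (1) collapses, step (2) is no longer ``translate a near-interval'': the union over the bulk sizes $a_1,\dots,a_{r-3}$ of sparse translates must itself be shown to have the right density, which is just the original problem for $r$ cliques all over again.

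For comparison, the paper never lets three cliques carry both the linear and the quadratic constraint. Its elementary argument (which only reaches $r\ge 7$) pairs six of the cliques as $x_i$ and $2t-x_i$, so that $\sum n_i=n$ is satisfied automatically and membership reduces to $2\sum_{i=1}^{3}(x_i-t)^2=f(t)$ with the $x_i-t$ ranging freely over $[-t,t]$; Lemma~\ref{Lem:Gauss-Legendre} then applies to genuinely unconstrained three-square representations, and the free parameter $t$ is moved through a short window to avoid the residues $4^a(8b+7)$ and keep $0\le f(t)\le t^2$. For $5\le r\le 6$ even this fails, and the paper proves Theorem~\ref{Thm:Key Thm} by reducing to $r=5$ (your bootstrapping inclusion $C(n-a,r-1)+\binom{a}{2}\subseteq C(n,r)$ is the same reduction the paper uses, so that part of your plan is fine) and running the circle method on $x_1^2+x_2^2+x_3^2+x_4^2+(x_1+x_2+x_3+x_4-n)^2=2m+n$: minor arcs are controlled on average via Bessel's inequality, the singular series is bounded below by $\gg 1/\log\log n$, and one concludes that all but $O\bigl(n^{2-\frac{1}{50}}\bigr)$ values of $m$ in the bulk range are attained. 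Some device of this kind --- the pairing trick with residue bookkeeping, or an exceptional-set/circle-method count --- is what your proposal is missing, and without it the claimed $o(n^2)$ loss cannot be reached.
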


We remark (as we shall see later in the proof of Theorem~\ref{Thm:r>6}) that if Theorem~\ref{Thm:Key Thm} holds for the case $s$, then it holds for any $r\geq s$.

\subsection{An elementary proof for $r\geq 7$}

We first prove the following weak version of Theorem~\ref{Thm:Key Thm} by elementary arguments.

\begin{thm}\label{Thm:r>6}
Let $r\ge 7.$ Then for some constant $c_r>0$, we have
$\left[\frac{n^2}{2r}+c_rn, \frac{n^2-n}{2}-c_rn^{3/2}\right]\subseteq C(n,r).$
\end{thm}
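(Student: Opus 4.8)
The starting point is the elementary identity $\sum_{i=1}^{r}\binom{n_i}{2}=\tfrac12\big(\sum_{i=1}^{r}n_i^2-n\big)$, valid for any composition $n=n_1+\dots+n_r$, which recasts membership $e\in C(n,r)$ as a Waring-type problem with a linear side condition: realize the integer $N=2e+n$ as $N=\sum_{i=1}^{r}n_i^2$ with $\sum_{i=1}^{r}n_i=n$ and all $n_i\ge 0$. The plan is to build such representations from two interacting pieces. The first is a \emph{Gauss gadget}: reserve four of the $r$ cliques, with sizes summing to a value $s$ that we keep at our disposal; after an appropriate centering of these four sizes around $s/4$ one checks that $\sum_{i=1}^{4}n_i^2$, for fixed sum $s$, equals an explicit shift plus a term of the shape $(a_1+a_2)^2+(a_1+a_3)^2+(a_2+a_3)^2$, so by Lemma~\ref{Lem:Gauss-Legendre} the gadget can hit any prescribed value unless that value is forced into the set $\{4^a(8b+7):a,b\in\dN\}$. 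The second is a \emph{coarse part}: the remaining $r-4$ cliques are used both to position the target inside the desired window and, by perturbing their sizes (and $s$) by bounded amounts, to push the number the gadget must represent out of the Gauss obstruction set. It is precisely here that one needs $r-4\ge 3$, which is the origin of the hypothesis $r\ge 7$.

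To cover the whole interval, including its upper end near $\binom n2$, I would split according to whether $e\le \tfrac{n^2}{4}$ or $e\ge \tfrac{n^2}{4}$, with an overlap around $\tfrac{n^2}{4}$. In the lower regime the $r-4$ coarse cliques are taken of size $\approx n/r$ and varied within a window of width $O(\sqrt n)$; combined with the gadget this sweeps out every admissible $N$ from just above $n^2/r$ up to about $n^2/2$, giving $C(n,r)\supseteq[\tfrac{n^2}{2r}+c_rn,\ \approx\tfrac{n^2}{4}]$. In the upper regime I would instead let the coarse part become unbalanced, equivalently peel off one dominant clique of size $a$: choosing $a$ with $0\le e-\binom a2\le\binom{n-a}{2}$ (so $n-a$ runs from $\Theta(\sqrt n)$, near the top, up to $\Theta(n)$, near $\tfrac{n^2}{4}$) reduces the task to realizing $e-\binom a2$ on $n-a$ vertices with $r-1$ cliques, which the same coarse-plus-gadget mechanism handles, now with $r-1\ge 6$ leaving a four-clique gadget and at least two coarse cliques, aided by the freedom to shift $a$ by $O(\sqrt n)$. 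Chaining over $a$ then yields $C(n,r)\supseteq[\approx\tfrac{n^2}{4},\ \binom n2-c_rn^{3/2}]$; the $n^{3/2}$ loss at the top is genuine, since there $C(n,r)$ is sparse — it omits the entire interval $(\binom{n-1}{2},\binom n2)$ and only becomes dense again around $\binom n2-n^{3/2}$, which is exactly the scale at which the $\Theta(\sqrt n)$ leftover vertices leave the gadget enough room to correct.

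Taking the union of the two regimes and absorbing all the $O(n)$ and $O(n^{3/2})$ error terms into a single constant $c_r$ would then finish the proof. I expect the main obstacle to be bookkeeping the several constraints that must hold simultaneously — the linear condition $\sum n_i=n$, the congruence conditions governing which $N=2e+n$ are realizable as $\sum n_i^2$, and avoidance of the Gauss set $\{4^a(8b+7)\}$ — while keeping every clique size nonnegative and inside its prescribed window. This is also the point at which the threshold is fixed: three coarse cliques are what one needs in order to move the gadget's target off the obstruction set, so the elementary argument stops at $r\ge 7$, and a genuinely different (analytic) input is required to push down to $r\ge 5$.
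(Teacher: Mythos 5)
Your high-level frame is the same as the paper's: translate $e\in C(n,r)$ into $2e+n=\sum_i n_i^2$ with $\sum_i n_i=n$, isolate a quadratic gadget whose variable part is a sum of three squares, and invoke Lemma~\ref{Lem:Gauss-Legendre} together with a congruence argument to dodge $\{4^a(8b+7)\}$. (The paper first reduces to $r=7$ by a one-line induction, then uses a single parameter $t$: three complementary pairs $(x_i,2t-x_i)$ plus one clique of size $n-6t$, so effectively a six-clique gadget and one coarse clique; the same $t$ does the coarse positioning and, via an explicit computation mod $16$ over the ten consecutive values $t_0+1,\dots,t_0+10$, the avoidance of the obstruction set.) The first genuine gap in your write-up is that this avoidance step --- the actual heart of the proof --- is only asserted. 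You say the coarse cliques and $s$ can be perturbed ``by bounded amounts'' to push the gadget's target off $\{4^a(8b+7)\}$, but you never verify that the available perturbations move the target through enough residue classes mod $16$ while simultaneously keeping the target nonnegative and below the square of the gadget's room, keeping every clique size nonnegative with total exactly $n$, and preserving the integrality/parity of the centering at $s/4$ (your form $(a_1+a_2)^2+(a_1+a_3)^2+(a_2+a_3)^2$ represents only \emph{even} values, and $n_i=s/4+a_i$ needs $4\mid s$ or a shifted identity). In the paper this is the concrete step where $t'$ is chosen with $t'+n\equiv 0\pmod 8$ and the increments $\tfrac{f(t'+1)-f(t')}{2}\equiv 11$, $\tfrac{f(t'+2)-f(t'+1)}{2}\equiv 1\pmod{16}$ are computed. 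Without such a verification your structural claim that exactly three coarse cliques are needed --- your explanation of the threshold $r\ge 7$ --- is unsupported; indeed the paper's construction gets by with a single coarse parameter.

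The second gap is quantitative: your two regimes do not meet. In the lower regime you pin the $r-4$ coarse cliques at size $\approx n/r$ (so the gadget's total is $s\approx 4n/r$) and vary sizes only within $O(\sqrt n)$; then even letting the gadget become completely unbalanced, $\sum_i n_i^2\le (r-4)\tfrac{n^2}{r^2}+s^2+o(n^2)=\tfrac{r+12}{r^2}n^2+o(n^2)$, which is strictly below the claimed reach $N\approx n^2/2$ for every $r\ge 7$ (for $r=7$ it is about $0.39n^2$, i.e.\ $e\lesssim 0.19n^2$, well short of $n^2/4$ where your upper regime begins). So, as described, a middle portion of the interval is uncovered. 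This is repairable --- let the coarse part slide, or run your dominant-clique construction all the way down, noting that $\min_a\bigl(\binom a2+\tfrac{(n-a)^2}{2(r-1)}\bigr)=\tfrac{n^2}{2r}+O(n)$ --- but then the upper regime invokes the full statement for $r-1$ cliques on $n-a$ vertices, so the induction has to be organized with care rather than by fiat at $e=n^2/4$. Your closing observation that the $n^{3/2}$ loss at the top is genuine is correct, and matches why the paper needs $t_0>11\sqrt n-1$.
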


\noindent{\it Proof.}
We first show that it suffices to handle the case $r=7.$
Suppose that $\Big[\frac{n^2}{2r}+c_rn, \frac{n^2-n}{2}-c_rn^{3/2}\Big]\subseteq C(n,r)$ holds.
We claim that this will lead to the analog statement for the case $r+1.$
For any $n_0\in C\Big(n-\lfloor\frac{n}{r+1}\rfloor,r\Big),$
we have $n_0+\binom{\lfloor\frac{n}{r+1}\rfloor}{2}\in C(n,r+1),$ that is, $C\Big(n-\lfloor\frac{n}{r+1}\rfloor,r\Big)+\binom{\lfloor\frac{n}{r+1}\rfloor}{2}\subseteq C(n,r+1).$
Also because $C(n,r)\subseteq C(n,r+1)$,
a careful calculation would give $\Big[\frac{n^2}{2(r+1)}+c_{r+1}n, \frac{n^2-n}{2}-c_{r+1}n^{3/2}\Big]\subseteq C(n,r+1)$ for some $c_{r+1}>0$, as claimed.

	For $r=7,$ we will prove that
	$$\left[\frac{n^2}{14}+\frac{n}{2}+2100, \frac{n^2-n}{2}-66n^{3/2}\right]\subseteq C(n,7).$$
	
	\noindent Let $m\in\Big[\frac{n^2}{14}+\frac{n}{2}+2100, \frac{n^2-n}{2}-66n^{3/2}\Big]$ be an integer.
	Note that $m\in C(n,7)$ if and only if there exist non-negative integers $n_1,n_2,\ldots,n_7$ such that
	$\sum_{i=1}^{7}n_i^2=2m+n\ \text{and}\ \sum_{i=1}^{7}n_i=n.$	
	Therefore, in order to prove $m\in C(n,7)$, we only need to find integer solutions to
	\begin{align}\label{Equ:eq 1}
	\sum_{i=1}^{3}\big(x_i^2+(2t-x_i)^2\big)+(n-6t)^2=2m+n,
	\end{align}
	where $0\le x_i\le 2t\ (1\le i\le 3)$ and $6t\le n.$ The equation \eqref{Equ:eq 1} is equivalent to
	\begin{align*}
	2\sum_{i=1}^{3}(x_i-t)^2=2m+n-(n-6t)^2-6t^2.
	\end{align*}
	In view of Lemma \ref{Lem:Gauss-Legendre},
	the equation \eqref{Equ:eq 1} is solvable if there exists $t\in \dN$ such that
	\begin{align}\label{Equ:eq 2}
	6t\le n,\ 0\le 2m+n-(n-6t)^2-6t^2\le t^2\ \text{and}\  2m+n-(n-6t)^2-6t^2\in2\dG,
	\end{align}
	where $\dG$ is from Lemma \ref{Lem:Gauss-Legendre} and the second inequality insure that $0\le x_i\le 2t$ for $1\le i\le 3$.

	We denote $f(t):=f(t;m,n)=2m+n-(n-6t)^2-6t^2=-42(t-\frac{n}{7})^2-\frac{n}{7}^2+n+2m.$
	When $0\le t\le \frac{n}{7},$ $f(t)$
	is an increasing function with $f(0)=-n^2+n+2m$ and $f(\frac{n}{7})=-\frac{n}{7}^2+n+2m.$
	Since $\frac{n^2}{14}+\frac{n}{2}+2100\le m\le \frac{n^2-n}{2}-66n^{3/2},$
	we have $f(0)<0$ and $f(\frac{n}{7})>f(\frac{n}{7}-10)=-4200-\frac{n}{7}^2+n+2m\ge 0.$
	Now we can choose an integer $t_0$ with $0\le t_0\le \frac{n}{7}-10$ such that $f(t_0)\le 0$ and
	$f(t_0+1)>0.$
	Next we show there exists an integer $t_0+1\le t\le t_0+10,$ satisfying \eqref{Equ:eq 2}.
	
	We first claim that $t_0>11\sqrt{n}-1.$
	Indeed, suppose that $t_0\le 11\sqrt{n}-1,$
	then we can see that
	$0<f(t_0+1)\le f(11\sqrt{n})=-5082n+132n^{3/2}-n^2+n+2m\le -5082n<0,$ which is a contradiction.
	It is easy to see that $f(t+1)-f(t)=-84t-42+12n,$ and if $t>11\sqrt{n}-1,$
	then
	$f(t+10)-f(t)=-840t-4200+120n<t^2.$
	Thus, for any $t_0+1\le t\le t_0+10\le n/7$,
	we have $6t\leq 6n/7<n$ and $0<f(t_0+1)\le f(t)\le f(t_0+10)\le f(t_0)+t_0^2\le t_0^2<t^2.$
	It is left to explain that one can choose $t$ with  $t_0+1\le t\le t_0+10$
	such that $f(t)\in 2\dG.$
	
	Note that $f(t)$ is alwalys an even integer.
	Also note that $8b+7\equiv 7,15\,(mod\ 16),$
	$4(8b+7)\equiv 12\,(mod\ 16)$
	and $4^a(8b+7)\equiv 0\,(mod\ 16)$ for all $a\ge 2.$
	Thus by Lemma \ref{Lem:Gauss-Legendre},
	it suffices to find $f(t)$ such that $\frac{f(t)}{2}\not\equiv 0,7,12,15\,(mod\ 16).$
	It is clear that we can choose $t'$ with $t_0+1\le t'\le t_0+8,$ such that
	$t'+n\equiv 0\ (mod\ 8)$. Then
	$$\frac{f(t'+1)-f(t')}{2}=-42t'-21+6n\equiv 11\,(mod\ 16) \mbox{ ~~~and~~~ }$$
	$$\frac{f(t'+2)-f(t'+1)}{2}=-42(t'+1)-21+6n\equiv 1\,(mod\ 16).$$
	We can easily see from above that there exists $t\in\{t',t'+1,t'+2\}$ such that
	$\frac{f(t)}{2}\not\equiv 0,7,12,15\,(mod\ 16).$
	This completes the proof of Theorem \ref{Thm:r>6}. 	\QED

\subsection{Proof of Theorem \ref{Thm:Key Thm}}\label{subsec:4.2}
In this subsection, we will use 
analytic method to prove Theorem \ref{Thm:Key Thm}.
Let $f$ and $g$ be two functions on the same domain, where $f$ takes complex values and $g$ takes non-negative real values.
We use the Vinogradov symbols $f \ll g$, if there exists a constant $C>0$ such that $|f|\le Cg$.\footnote{Here and in the rest, $|f|$ denotes the modulus of a complex number $f$.}

As the previous analysis in the proof of Theorem \ref{Thm:r>6},
we may assume that $r=5$ in the following proof.
Indeed, since $C(n,r)\subseteq C(n,r+1)$ and
$C\Big(n-\lfloor\frac{n}{r+1}\rfloor,r\Big)+\binom{\lfloor\frac{n}{r+1}\rfloor}{2}\subseteq C(n,r+1),$
then by the definition of $C(n,r)$ and induction on $r,$ we can get the desired result for $r+1$.

Let
$\dE(n)=\Big\{\frac{n^2}{10}+\frac{n^2}{\log n}\le m\le \frac{n^2-n}{2}-\frac{n^2}{\log n}:\ \dR(m)=0\Big\},$
with
\begin{equation}\label{Equ:definition of R(m)}
	\dR(m)=
	\sum_{\substack{
			1\le x_1,x_2,x_3,x_4\le \frac{n}{5}-\frac{n}{\log n}\\
			x_1^2+x_2^2+x_3^2+x_4^2+(x_1+x_2+x_3+x_4-n)^2=2m+n}}
	1,
\end{equation}
where $x_1,x_2,x_3,x_4$ are integers.
Note that $\dR(m)>0$ implies that $m\in C(n,5).$
Therefore, to prove Theorem \ref{Thm:Key Thm}, it is enough to prove $|\dE(n)|=o(n^2).$
We will show the following stronger result.
Let $\mathbb{Z}$ be the set of integers.

\begin{thm}\label{Thm:E(n)}
	$|\dE(n)|=O\big(n^{2-\frac{1}{50}}\big).$
\end{thm}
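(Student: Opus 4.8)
\textbf{Proof proposal for Theorem~\ref{Thm:E(n)}.}
The plan is to estimate $|\dE(n)|$ via a second-moment (variance) argument applied to the counting function $\dR(m)$. Write $M_1 = \frac{n^2}{10} + \frac{n^2}{\log n}$ and $M_2 = \frac{n^2-n}{2} - \frac{n^2}{\log n}$, so that $\dE(n) \subseteq [M_1,M_2]$ consists exactly of those $m$ in this range with $\dR(m) = 0$. The first step is to compute the first moment $\sum_{M_1 \le m \le M_2} \dR(m)$; since $\dR(m)$ counts quadruples $(x_1,x_2,x_3,x_4)$ in the box $[1,\frac{n}{5}-\frac{n}{\log n}]^4$ with $x_1^2+\cdots+x_4^2+(x_1+x_2+x_3+x_4-n)^2 = 2m+n$, summing over $m$ in the admissible range simply counts (up to boundary losses) all such quadruples whose associated value $2m+n$ lands in $[2M_1+n, 2M_2+n]$. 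A direct lattice-point count shows this first moment is $\gg n^4$, indeed of order $cn^4$ for an explicit $c>0$, provided the ranges have been chosen so that the ``energy'' quadratic form $x_1^2+\cdots+x_4^2+(\sum x_i - n)^2$ genuinely takes values filling most of $[2M_1+n,2M_2+n]$ when the $x_i$ run over the box — this is why the box is cut off at $\frac{n}{5} - \frac{n}{\log n}$ rather than exactly $\frac{n}{5}$, and why the $m$-range is shrunk by $\frac{n^2}{\log n}$ at both ends.

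The second and main step is to bound the second moment $\sum_{M_1 \le m \le M_2} \dR(m)^2$. Expanding the square, this counts octuples $(x_1,\dots,x_4,y_1,\dots,y_4)$ in the box with
\[
\sum_{i=1}^4 x_i^2 + \Big(\sum_{i=1}^4 x_i - n\Big)^2 = \sum_{i=1}^4 y_i^2 + \Big(\sum_{i=1}^4 y_i - n\Big)^2.
\]
The trivial bound here is $O(n^7)$ (seven free variables, one quadratic constraint), which would give, via Cauchy--Schwarz, only $|\dE(n)| = O(n^2)$ — not good enough. To gain the extra power $n^{-1/50}$ we need a genuinely arithmetic input: the number of representations of an integer $N \asymp n^2$ by the five-variable positive quadratic form $Q(z) = z_1^2 + \cdots + z_4^2 + z_5^2$ (after the affine substitution $z_5 = \sum x_i - n$) is, on average, close to its expected value, but individual values can be larger. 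The cleanest route is the circle method / Kloosterman refinement, or simply the classical bound that $r_5(N)$, the number of representations of $N$ as a sum of five squares, satisfies $r_5(N) \ll N^{3/2}$ with the average of $r_5(N)^2$ over $N \le X$ being $\ll X^{?}$; more precisely one wants $\sum_{N \le X} r_5(N)^2 \ll X^{3-\delta}$ for some $\delta>0$, which combined with the restriction of all eight variables to a box of side $\asymp n$ yields $\sum_m \dR(m)^2 \ll n^{7-\delta'}$. The honest way to extract such a saving is to write $\dR(m)$ via exponential sums $S(\alpha) = \sum_{1 \le x \le n/5} e(\alpha x^2)$ and $T(\alpha) = \sum_{1 \le x \le n/5} e(-2n\alpha x)$-type factors, so that $\dR(m) = \int_0^1 S(\alpha)^4 \cdot (\text{something}) \cdot e(-\alpha(2m+n))\,d\alpha$, and then bound $\sum_m \dR(m)^2 \le \int_0^1 |S(\alpha)|^8 \cdot |(\cdots)|^2\, d\alpha$; the minor-arc contribution is controlled by Weyl's inequality (each factor $S(\alpha)$ on the minor arcs is $\ll n^{1-\sigma}$ for $\sigma$ close to $1/2$ up to logs, by Weyl or van der Corput for quadratic exponential sums), and the major arcs give the expected main term $\sim (\text{first moment})^2 / (\text{length of } m\text{-range})$. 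The numerology $1/50$ presumably comes from the specific Weyl-type exponent one is willing to use cheaply (a quadratic Weyl sum of length $n$ on a minor arc is $\ll n^{1/2+\varepsilon}$, and with eight copies one gets a saving of a fixed fraction of $n$), optimized against the box constraint.

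The third step is the standard deduction: by Cauchy--Schwarz,
\[
\Big(\sum_{M_1 \le m \le M_2} \dR(m)\Big)^2 \le |\{m : \dR(m) > 0\}| \cdot \sum_{M_1 \le m \le M_2} \dR(m)^2,
\]
so $|\{m \in [M_1,M_2] : \dR(m) > 0\}| \gg (n^4)^2 / n^{7-\delta'} = n^{1+\delta'}$ — wait, that counts the $m$ with $\dR(m)>0$, whereas we want the complement $\dE(n)$. So instead we run the argument the other way: by Chebyshev/Cauchy--Schwarz applied to the deviation, the number of $m$ in the range with $\dR(m)$ far from its average $\mu \asymp n^2$ (in particular $\dR(m) = 0$) is at most
\[
|\dE(n)| \le \frac{\sum_{M_1 \le m \le M_2}(\dR(m) - \mu)^2}{\mu^2} \ll \frac{\sum_m \dR(m)^2 - (\text{first moment})^2/(M_2-M_1)}{n^4} \ll \frac{n^{7-\delta'} - n^{7-o(1)}}{n^4}.
\]
For this to beat $O(n^2)$ we need the variance $\sum_m \dR(m)^2 - \mu^2(M_2-M_1)$ to be genuinely smaller than $n^{6}$, i.e. the minor-arc saving must exceed the ``obvious'' cancellation; tracking constants carefully in the major-arc main term and the minor-arc Weyl bound is exactly where the exponent $2 - \frac{1}{50}$ is forced. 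I expect the main obstacle to be precisely this variance estimate: getting an honest power saving in $\sum_m \dR(m)^2$ over the trivial $n^7$, which requires either a careful circle-method treatment of the octuple equation above (with Weyl's inequality on minor arcs and an exact evaluation of the singular series / singular integral on major arcs) or a sufficiently strong off-diagonal bound for pairs of five-square representations of nearby integers in a box. The rest — the first moment computation and the Cauchy--Schwarz deduction — is routine bookkeeping once the cutoffs $\frac{n}{\log n}$ are in place to keep the main term uniformly of size $\asymp n^2$ away from the degenerate edges of the range.
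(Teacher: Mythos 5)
Your overall instinct---run the circle method on $\dR(m)$ and show the set of $m$ with $\dR(m)=0$ is small---is in the right family, but the specific route you propose (first moment, second moment about a constant mean $\mu$, then Chebyshev) has a genuine gap that makes it fail as written. The main term of $\dR(m)$ is not constant in $m$: on the major arcs one gets $\dR(m)\approx G(m)\mathfrak{I}(m)$, where the singular series $G(m)$ is a product of local densities depending on the arithmetic of $\Delta=n^2-5(2m+n)$ (for instance on which small primes divide $\Delta$), and these local factors fluctuate by constant factors as $m$ ranges over the interval. Consequently $\sum_m(\dR(m)-\mu)^2\asymp n^6$ for \emph{any} constant $\mu$, and your Chebyshev step returns only the trivial $|\dE(n)|=O(n^2)$. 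Relatedly, your quantitative target is off: $\sum_m \dR(m)^2$ has true order $n^6$, so a bound of shape $n^{7-\delta}$ is far too weak, and the final displayed estimate $(n^{7-\delta'}-n^{7-o(1)})/n^4$ is not coherent; to make a variance argument work you would need an asymptotic for the second moment with error $O(n^{6-1/50})$, including the off-diagonal correlations, which is at least as hard as the problem itself. A further issue: after the substitution $z_5=\sum x_i-n$ the five coordinates are not independent, so this is a quaternary form of determinant $5$ (with congruence subtleties at $2$ and $5$), not a sum of five squares, and $r_5(N)$ statistics do not apply; in four variables the positivity of the local factors is exactly the delicate point, and your plan never addresses it.

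The repair is to recenter at the $m$-dependent main term, which is what the paper does: writing $\dR(m)=\int f(\alpha)\e(-\alpha(2m+n))\,d\alpha$ with $f$ the exponential sum over the box, it takes $L=n^{1/50}$, evaluates the major-arc piece \emph{pointwise in $m$} as $G(m)\mathfrak{I}(m)+O(n^{2-\frac{1}{100}})$ (Lemma~\ref{Lem:M estimation}), proves the lower bounds $G(m)\gg 1/\log\log n$ (Claim~\ref{Clm:G(m)}, via Claims~\ref{Clm:fuction q}--\ref{Clm:T(q,m),(q,10)=1} handling the primes $2$, $5$ and $p\mid\Delta$) and $\mathfrak{I}(m)\gg n^2(\log n)^{-10}$, and controls only the minor-arc piece in mean square: by Weyl differencing (Claim~\ref{Clm:estimation of f(a)^2}), a dyadic decomposition, and Bessel's inequality, $\sum_m\bigl|\int_{\dm(L)}f(\alpha)\e(-\alpha(2m+n))\,d\alpha\bigr|^2\ll n^6(\log n)^5L^{-2}$ (Lemma~\ref{Lem:m estimation}). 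Hence the set $W$ of $m$ whose minor-arc contribution exceeds $n^2(\log n)^{-11}$ has size $O(n^2(\log n)^{27}L^{-2})=O(n^{2-\frac{1}{50}})$, and for $m\notin W$ the positive main term dominates, so $\dR(m)>0$ and $\dE(n)\subseteq W$. In short, the missing ideas in your proposal are (i) subtracting the fluctuating arithmetic main term rather than a constant, and (ii) establishing positivity of the singular series and singular integral for this specific quaternary form; without both, the second-moment bookkeeping cannot produce the exponent $2-\frac{1}{50}$.
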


We denote $N=\frac{n}{5}-\frac{n}{\log n}.$
For positive integers $x_1,x_2,x_3,x_4,$
we use $\vec{\textbf x}$=$\{x_1,x_2,x_3,x_4\}$ to denote vectors in $\mathbb{Z}^{4}.$
Let $Q(\vec{\textbf x})=Q(x_1,x_2,x_3,x_4)=x_1^2+x_2^2+x_3^2+x_{4}^2+(x_1+x_2+x_3+x_{4}-n)^2.$
For a vector $\vec{\textbf y}$=$\{y_1,y_2,y_3,y_{4}\}\in \mathbb{Z}^{4},$
the notation $\vec{\textbf x}\le \vec{\textbf y}$
means $x_i\le y_i$ for all $1\le i\le 4,$ and
$1\le\vec{\textbf x}\le C$
means $1\le x_i\le C$ for all $1\le i\le 4.$
For convenience, we write $\e(\alpha)=e^{2\pi i \alpha}.$
Define
\begin{equation}\label{Equ:definition of f(a)}
f(\alpha)=\sum\limits_{1\le \vec{\textbf x} \le N}\e\big(\alpha Q(\vec{\textbf x})\big).
\end{equation}
Then by \eqref{Equ:definition of R(m)} and \eqref{Equ:definition of f(a)}, we have
\begin{equation}\label{Equ:R(m) equality}
\dR(m)=\int_{\frac1n}^{1+\frac1n}f(\alpha)\e\big(-\alpha (2m+n)\big)\, d\alpha.
\end{equation}
We define
\begin{equation*}
\dM(X)=
\bigcup_{1\le q\le X}
\bigcup_{\substack{1\le a\le q\\
		(a,q)=1}}
\left[\frac{a}{q}-\frac{X}{qn^2},\frac{a}{q}+\frac{X}{qn^2}\right]\ \ \text{and}\ \  \dm(X)=\left[\frac1n,1+\frac1n\right]\backslash\dM(X).
\end{equation*}

Note that the above union is pairwise disjoint for $X\le \frac{n}{2}.$
Also note that both $\int_{\dM(L)}f(\alpha)\e\big(-\alpha(2m+n)\big)\,d\alpha$ and
$\int_{\dm(L)}f(\alpha)\e\big(-\alpha(2m+n)\big)\,d\alpha$ take real values.
We first estimate the integral on $\dm(X)$ in the following lemma.

\begin{lemma}\label{Lem:m estimation}
	Let $L\le \frac{n}{2}.$
	Then
	$\sum_{\frac{n^2}{10}+\frac{n^2}{\log n}\le m\le \frac{n^2-n}{2}-\frac{n^2}{\log n}}\big|\int_{\dm(L)}f(\alpha)\e\big(-\alpha (2m+n)\big)\, d\alpha\big|^2=O\left(\frac{n^{6}(\log n)^5}{L^{2}}\right).$
\end{lemma}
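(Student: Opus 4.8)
The plan is to bound the mean square of the minor-arc contribution using Bessel's inequality together with a mean-value (moment) estimate for the exponential sum $f(\alpha)$ on the minor arcs. First I would observe that for each fixed $m$ in the relevant range, $\int_{\dm(L)}f(\alpha)\e(-\alpha(2m+n))\,d\alpha$ is the $(2m+n)$-th Fourier coefficient of the function $f(\alpha)\mathbf{1}_{\dm(L)}(\alpha)$, so by Bessel's inequality the sum over $m$ of the squared moduli is at most $\int_{\dm(L)}|f(\alpha)|^2\,d\alpha$. This already reduces the task to proving $\int_{\dm(L)}|f(\alpha)|^2\,d\alpha = O\big(n^6(\log n)^5/L^2\big)$, so no cancellation in $m$ is needed beyond orthogonality.

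Next I would estimate $\int_{\dm(L)}|f(\alpha)|^2\,d\alpha$. Expanding the square, $|f(\alpha)|^2 = \sum_{1\le\vec{\textbf x},\vec{\textbf y}\le N}\e\big(\alpha(Q(\vec{\textbf x})-Q(\vec{\textbf y}))\big)$, and the standard route is to write $\int_{\dm(L)}|f(\alpha)|^2\,d\alpha \le \sup_{\alpha\in\dm(L)}|f(\alpha)|^{2}\cdot\big(\text{measure or }L^0\text{ bound}\big)$ — but that is wasteful; instead I would combine a pointwise Weyl-type bound on $\dm(L)$ with an $L^4$ (or $L^2$) mean-value bound. Concretely, I would use $\int_{\dm(L)}|f(\alpha)|^2\,d\alpha \le \big(\sup_{\alpha\in\dm(L)}|f(\alpha)|\big)^2 \int_0^1 |f(\alpha)|^2 \frac{|f(\alpha)|^{?}}{\cdots}$, so more cleanly: bound one factor of $|f|^2$ pointwise by a Weyl estimate valid off the major arcs, and integrate the remaining factor trivially. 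Since $Q$ is a positive-definite quadratic form in four variables, $f(\alpha)$ factors (after a linear change of variables / completing the square) essentially into a product of classical Gauss-sum-type one-dimensional sums $\sum_{x\le N}\e(\alpha x^2 + \beta x)$, for which Weyl's inequality gives $|f(\alpha)| \ll N^{4}(q^{-1}+N^{-1}+q N^{-2})^{c}$ when $|\alpha - a/q|\le q^{-2}$; on $\dm(L)$ one has $q>L$ (or the rational approximation is far), giving a saving of a power of $L$. The number of such one-dimensional factors and the exponent $c$ must be tracked so that the final exponent of $L$ is exactly $-2$ and the powers of $n$ and $\log n$ come out as $n^6(\log n)^5$; the $n/\log n$ truncation in $N$ and the Dirichlet-approximation parameter feeding the $\dm(L)$ definition are what produce the $(\log n)^5$.

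The main obstacle I anticipate is obtaining the correct power of $L$ (namely $L^{-2}$, not merely $L^{-1}$) from the minor-arc analysis while simultaneously keeping the $n$-power down to $n^6$. A single application of Weyl's inequality to one quadratic exponential sum typically yields only a $q^{-1/2}$-type saving, which would give $L^{-1}$; to reach $L^{-2}$ one must exploit that $Q$ involves several independent square variables, so that the pointwise bound for $f(\alpha)$ itself carries a factor like $L^{-2}$ after combining the savings from at least four of the constituent one-dimensional sums, or alternatively use a mean-value estimate (a fourth-moment / Hua-type bound for sums of squares) on $\dm(L)$ rather than a crude pointwise bound. I would therefore split $f(\alpha)$ into its one-dimensional Gauss-sum factors, apply Weyl to enough of them to extract $q^{-2}\ll L^{-2}$ worth of cancellation on $\dm(L)$, bound the rest trivially by $N$, and then integrate; reconciling this with the target $n^6(\log n)^5$ is where the bookkeeping is delicate. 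A secondary technical point is handling the range of $q$ up to $L$ in the union defining $\dM(L)$ and the endpoint $[1/n, 1+1/n]$ of the circle, but these are routine once the pointwise minor-arc bound is in hand.
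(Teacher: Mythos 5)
Your first step (Bessel's inequality, reducing the lemma to bounding $\int_{\dm(L)}|f(\alpha)|^2\,d\alpha$) is exactly the paper's reduction, and your instinct to use a Weyl/differencing pointwise bound for $f$ off the major arcs is also on the right track: differencing the four-variable form directly gives $|f(\alpha)|^2\ll n^{8}(\log n)^4\big(\tfrac1q+\tfrac1n+\tfrac{q}{n^2}\big)^4$ for $|\alpha-\tfrac aq|\le\tfrac1{q^2}$, so the saving in $q$ is not the problem you fear (you get $q^{-4}$ in $|f|^2$, far more than $q^{-2}$). The genuine gap is in how you propose to integrate this over $\dm(L)$. Your stated plan --- bound one factor pointwise on the minor arcs and ``integrate the remaining factor trivially'' --- cannot reach $O\big(n^{6}(\log n)^5/L^2\big)$: the minor arcs $\dm(L)$ have measure essentially $1$, so sup-times-measure gives only $n^{8}(\log n)^4/L^4$, which is weaker than the target by a factor $(n/L)^2$ (and is insufficient for the application with $L=n^{1/50}$); the Hölder variant $\sup_{\dm(L)}|f|\cdot\int_0^1|f|$ gives about $n^{7+o(1)}/L^2$, still short by a factor of $n$, and a fourth-moment (Hua-type) input fares no better, since $\int_0^1|f|^4$ counts about $n^{14}$ solutions. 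In short, no combination of a single pointwise bound with a global mean value produces the exponent $n^{6}$ together with $L^{-2}$.

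The missing idea is to decompose $\dm(L)$ dyadically according to the size of the denominator in the Dirichlet approximation: set $\dn(X)=\dM(2X)\setminus\dM(X)$ and cover $\dm(L)$ by $\dn(L),\dn(2L),\ldots,\dn(n/2)$. On $\dn(X)$, Dirichlet's theorem with parameter $n^2/X$ forces $q>X$, so the pointwise bound gives $\sup_{\dn(X)}|f|^2\ll n^{8}(\log n)^4/X^4$, while --- and this is the crucial complementary input your proposal never uses --- the \emph{measure} of $\dn(X)$ is $O(X^2/n^2)$, because it is contained in $\dM(2X)$, a union of $O(X^2)$ arcs of length $O(X/(qn^2))$ each. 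Multiplying, each annulus contributes $O\big(n^{6}(\log n)^4/X^2\big)$, and summing over the $O(\log n)$ dyadic scales (dominated by $X=L$) yields $O\big(n^{6}(\log n)^5/L^2\big)$. So the required $L^{-2}$ does not come from the exponential-sum estimate alone, as your bookkeeping assumes, but from trading the pointwise saving $X^{-4}$ against the arc-measure factor $X^{2}/n^{2}$ scale by scale; without this decomposition your argument stalls at $n^{8}/L^{4}$ or $n^{7}/L^{2}$.
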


To show this lemma, we need the following claim.

\begin{claim}\label{Clm:estimation of f(a)^2}
	Suppose that $|\alpha-\frac{a}{q}|\le\frac{1}{q^2}$ with $(a,q)=1.$
	Then
	$$|f(\alpha)|^2\ll n^{8}(\log n)^{4}\left(\frac{1}{q}+\frac{1}{n}+\frac{q}{n^2}\right)^{4}.$$
\end{claim}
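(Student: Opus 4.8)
The plan is to bound $|f(\alpha)|^2$ by Weyl differencing and then to reduce the resulting four‑dimensional exponential sum to the fourth power of a one‑dimensional sum, exploiting that $Q$ is a nondegenerate quadratic form. Write $Q(\vec{\textbf x})=\vec{\textbf x}^{\,T}B\vec{\textbf x}-2n\,\vec{\textbf 1}^{\,T}\vec{\textbf x}+n^2$, where $\vec{\textbf 1}=(1,1,1,1)$ and $B=I+\vec{\textbf 1}\vec{\textbf 1}^{\,T}$ is the $4\times 4$ matrix with $2$ on the diagonal and $1$ elsewhere, so that $\vec{\textbf x}^{\,T}B\vec{\textbf x}=\sum_i x_i^2+(\sum_i x_i)^2$; note $\det B=1+\vec{\textbf 1}^{\,T}\vec{\textbf 1}=5\neq 0$. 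Squaring out, $|f(\alpha)|^2=\sum_{1\le\vec{\textbf x},\vec{\textbf y}\le N}\e\big(\alpha(Q(\vec{\textbf x})-Q(\vec{\textbf y}))\big)$, and the substitution $\vec{\textbf y}=\vec{\textbf x}-\vec{\textbf h}$ together with the quadratic identity $Q(\vec{\textbf x})-Q(\vec{\textbf x}-\vec{\textbf h})=2(B\vec{\textbf h})^{T}\vec{\textbf x}+\psi(\vec{\textbf h})$ (with $\psi$ depending on $\vec{\textbf h}$ alone) turns the inner sum over $\vec{\textbf x}$, which runs over a box, into a product over coordinates. Hence
$$|f(\alpha)|^2\le\sum_{\vec{\textbf h}\in\mathbb{Z}^4\cap[-N,N]^4}\ \prod_{j=1}^{4}\Big|\sum_{x_j}\e\big(2\alpha(B\vec{\textbf h})_j x_j\big)\Big|\ \ll\ \sum_{\vec{\textbf h}\in\mathbb{Z}^4\cap[-N,N]^4}\ \prod_{j=1}^{4}\min\Big(N,\ \|2\alpha(B\vec{\textbf h})_j\|^{-1}\Big),$$
where $\|\cdot\|$ denotes the distance to the nearest integer.

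The crucial step is to decouple the coordinates in the last sum. Since $\det B\neq 0$, the map $\vec{\textbf h}\mapsto\vec{\textbf u}=B\vec{\textbf h}$ is injective on $\mathbb{Z}^4$, and as $(B\vec{\textbf h})_j=h_j+\sum_i h_i$ we have $|(B\vec{\textbf h})_j|\le 5N$ for every $j$ whenever $\vec{\textbf h}\in[-N,N]^4$. Dropping the constraint that $\vec{\textbf u}$ lie in the image lattice (legitimate since every summand is nonnegative) gives
$$|f(\alpha)|^2\ \ll\ \sum_{\vec{\textbf u}\in\mathbb{Z}^4\cap[-5N,5N]^4}\ \prod_{j=1}^{4}\min\Big(N,\ \|2\alpha u_j\|^{-1}\Big)\ =\ \Big(\sum_{|u|\le 5N}\min\big(N,\|2\alpha u\|^{-1}\big)\Big)^{4}.$$
Now I would invoke the classical estimate: if $|\gamma-a'/q'|\le 1/q'^2$ with $(a',q')=1$, then $\sum_{0\le u\le H}\min(N,\|\gamma u\|^{-1})\ll(1+H/q')(N+q'\log q')$. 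Applying this with $\gamma=2\alpha$: from $|\alpha-a/q|\le 1/q^2$ one gets a reduced approximation $a'/q'$ to $2\alpha$ with $q/2\le q'\le q$ (the factor $2$ in the numerator costs at most a bounded factor in the approximation quality), and with $H=5N\asymp n$ this yields $\sum_{|u|\le 5N}\min(N,\|2\alpha u\|^{-1})\ll(\log n)\,(n^2/q+n+q)$. Taking fourth powers and using $n^2/q+n+q=n^2(1/q+1/n+q/n^2)$ gives
$$|f(\alpha)|^2\ \ll\ (\log n)^{4}\big(n^2/q+n+q\big)^{4}\ =\ n^{8}(\log n)^{4}\Big(\frac1q+\frac1n+\frac q{n^2}\Big)^{4},$$
which is exactly the claim. (For $q\ge n^2$ the bound is vacuous, since then $q/n^2\ge 1$ while $|f(\alpha)|\le N^4\le n^4$, so one may freely assume $\log q\ll\log n$ above.)

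I expect the main obstacle to be precisely the decoupling step: because $Q$ is not diagonal, after Weyl differencing the four frequencies $2\alpha(B\vec{\textbf h})_j$ share the common term $\sum_i h_i$, and the sum does not obviously split into a product. The resolution is the linear substitution $\vec{\textbf u}=B\vec{\textbf h}$ combined with positivity, which decouples the coordinates at the cost of the bounded factor $5=\det B$ — the extra eigenvalue of $B$ produced by the ``$(x_1+x_2+x_3+x_4-n)^2$'' term encoding the vertex count. The remaining ingredients are routine: the completion‑of‑square identity for $Q(\vec{\textbf x})-Q(\vec{\textbf x}-\vec{\textbf h})$, the factorization of an exponential sum over a box into one‑dimensional factors each bounded by $\min(N,\|\cdot\|^{-1})$, and the standard $\min$‑sum estimate, with only minor bookkeeping for the factor $2$ in $2\alpha$ and for the trivial range of large $q$.
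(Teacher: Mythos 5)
Your proposal is correct and follows essentially the same route as the paper's proof: Weyl differencing of $|f(\alpha)|^2$, factoring the inner sum over the box into one-dimensional geometric sums bounded by $\min\big(N,\|2\alpha(h+h_j)\|^{-1}\big)$, decoupling the four coordinates via the injectivity of $\vec{\textbf h}\mapsto B\vec{\textbf h}$ (which is exactly the paper's observation that the weight $w(\vec{\textbf t})$ is $0$ or $1$), and then the standard $\min$-sum estimate. The only cosmetic difference is that the paper absorbs the factor $2$ into the new summation variable $t_j=2(h+h_j)$ so that Vaughan's Lemma 2.2 applies directly to $\alpha$, whereas you transfer the rational approximation to $2\alpha$, a minor point you handle adequately.
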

\begin{proof}
	This can be proved by the standard difference argument.
	Note that
	\begin{align*}
	|f(\alpha)|^2&=\sum\limits_{1\le\vec{\textbf x},\ \vec{\textbf y}\le N} \e\Big(\alpha\Big(Q(\vec{\textbf x})-Q(\vec{\textbf y})\Big)\Big)
  	=\sum\limits_{-N+1\le\vec{\textbf h}\le N-1}
   	  \sum_{\substack{
   			1\le \vec{\textbf y}\le N\\
   			1\le \vec{\textbf y}+\vec{\textbf h}\le N}}
   	\e\Big(\alpha\Big(Q(\vec{\textbf y}+\vec{\textbf h})-Q(\vec{\textbf y})\Big)\Big).
	\end{align*}
	By triangle inequality, we deduce that
	$$|f(\alpha)|^2\le
	\sum\limits_{-N+1\le\vec{\textbf h}\le N-1}
	\Big|\sum_{\substack{
			1\le \vec{\textbf y}\le N\\
			1\le \vec{\textbf y}+\vec{\textbf h}\le N}}
	\e\Big(\alpha\Big(Q(\vec{\textbf y}+\vec{\textbf h})-Q(\vec{\textbf y})\Big)\Big)\Big|.
	$$
	Fix $\vec{\textbf h}$ and let $h=\sum_{i=1}^{4}h_i.$ Note that
	\begin{align*}
		\Big|\sum_{\substack{
				1\le \vec{\textbf y}\le N\\
				1\le \vec{\textbf y}+\vec{\textbf h}\le N}}
		\e\Big(\alpha\Big(Q(\vec{\textbf y}+\vec{\textbf h})-Q(\vec{\textbf y})\Big)\Big)\Big|
		\le	\Big|\sum_{\substack{
					1\le \vec{\textbf y}\le N\\
					1\le \vec{\textbf y}+\vec{\textbf h}\le N}}
			\e\Big(2\alpha \sum_{j=1}^{4}(h+h_j)y_j\Big)\Big|
		\le\prod_{j=1}^{4}	
		\Big|\sum_{\substack{
		1\le y_j\le N\\
		1-h_j\le y_j\le N-h_j}}
        \e\Big(2\alpha (h+h_j)y_j\Big)\Big|.
	\end{align*}
	Write $\|\beta\|=\min_{m\in \mathbb{Z}}|\beta-m|$. Since
	$\sum_{\substack{
			1\le y_j\le N\\
			1-h_j\le y_j\le N-h_j}}
	\e\Big(2\alpha (h+h_j)y_j\Big)
	\ll
	\min\big(N, \lVert 2\alpha (h+h_j)\rVert^{-1}\big),$
	we obtain
	\begin{align*}
	|f(\alpha)|^2
	\ll
	\sum\limits_{-N+1\le\vec{\textbf h}\le N-1}\prod_{j=1}^{4}
	\min\big(N, \lVert 2\alpha (h+h_j)\rVert^{-1}\big)
	\ll
	\sum\limits_{-10(N-1)\le\vec{\textbf t}\le 10(N-1)}w(\vec{\textbf t})\prod_{j=1}^{4}
	\min\big(N, \lVert\alpha t_j\rVert^{-1}\big),
	\end{align*}
	where
	$w(\vec{\textbf t})=\sum\limits_{-N+1\le\vec{\textbf h}\le N-1, ~2(h+h_j)=t_j, ~\forall 1\le j\le 4} 1.$
	It is easy to see that $w(\vec{\textbf t})\in\{0,1\},$ and thus we have
	\begin{align}\label{Equ:min inequality}
		|f(\alpha)|^2
		\ll
		\sum\limits_{-10(N-1)\le\vec{\textbf t}\le 10(N-1)}\prod_{j=1}^{4}
		\min\big(N, \lVert\alpha t_j\rVert^{-1}\big)
		=\left(\sum\limits_{-10(N-1)\le t\le 10(N-1)}\min\big(N, \lVert\alpha t\rVert^{-1}\big)\right)^{4}
	\end{align}
	As $|\alpha-\frac{a}{q}|\le\frac{1}{q^2}$ and $(a,q)=1$,
	by Lemma 2.2 in \cite{V09} (with $X=10N, Y=N/10$), we have
	\begin{align}\label{Equ:N^2logN}
	\sum\limits_{-10(N-1)\le t\le 10(N-1)}\min\big(N, \lVert\alpha t\rVert^{-1}\big)
	\ll
	N^2(\log N)\left(\frac1q+\frac1N+\frac{q}{N^2}\right).
	\end{align}
	This, together with \eqref{Equ:min inequality}, \eqref{Equ:N^2logN} and the definition of $N$, completes the proof of Claim \ref{Clm:estimation of f(a)^2}.	
\end{proof}



\medskip
\noindent\textbf{Proof of Lemma \ref{Lem:m estimation}.}
    We can see that
    $\int_{\dm(L)}f(\alpha)\e\big(-\alpha (2m+n)\big)\, d\alpha$
    is the Fourier coefficient of the function which is $f(\alpha)$ on $\dm(L)$ and $0$ otherwise.
	Hence, by Bessel's inequality, we have
	$$\sum\limits_{\frac{n^2}{10}+\frac{n^2}{\log n}\le m\le \frac{n^2-n}{2}-\frac{n^2}{\log n}}\Big|\int_{\dm(L)}f(\alpha)\e\big(-\alpha (2m+n)\big)\, d\alpha\Big|^2
	\le\int_{\dm(L)}|f(\alpha)|^2\,d\alpha.$$	
	For $X< \frac{n}{2},$ we define $\dn(X)=\dM(2X)\backslash \dM(X),$
	and for $X=\frac{n}{2},$ we write $\dn(X)=\Big[\frac1n,1+\frac1n\Big]\backslash\dM(X).$
	Let $t=\lceil \log_2\frac{n}{2L}\rceil.$
	By the dyadic argument and the definition of $\dn(X)$,
	we can see that
	$\dm(L)\subseteq \dn(L)\cup\dn(2L)\cup\cdots\cup\dn(2^{t-1}L)\cup\dn(\frac n2).$
	Thus we only need to prove that for $L\le X\le \frac{n}{2},$
	$\int_{\dn(X)}|f(\alpha)|^2\,d\alpha
	\ll\frac{n^{6}(\log n)^{4}}{X^{2}}.$
	
	By Dirichlet's approximation theorem (see Lemma 2.1 in \cite{V09}), for $\alpha\in \dn(X),$
	there exist $a, q\in\mathbb{N}$
	such that
	$|\alpha-\frac{a}{q}|\le\frac{X}{qn^2},$
	$1\le a\le q\le \frac{n^2}{X}$
	and $(a,q)=1.$
	Since $\alpha\notin\dM(X),$
	we further have $q>X.$
	Now it follows from Claim \ref{Clm:estimation of f(a)^2} that
	$\sup_{\alpha\in\dn(X)}|f(\alpha)|^2
	\ll\frac{n^{8}(\log n)^{4}}{X^{4}}.$
	Note that the measure of $\dn(X)$ is $|\dn(X)|\ll\frac{X^2}{n^2},$
	so we obtain
	$\int_{\dn(X)}|f(\alpha)|^2\,d\alpha\ll\frac{n^{6}(\log n)^{4}}{X^{2}},$
	which completes the proof of Lemma \ref{Lem:m estimation}.	
\QED

\bigskip

In order to estimate the contribution from $\dM(X)$ in \eqref{Equ:R(m) equality}, we define
\begin{equation}\label{Equ:definitions of S and T}
    S(q,a)=\sum_{1\le \vec{\textbf x}\le q}\e\left(\frac{a}{q}Q(\vec{\textbf x}) \right)
    \ \ \text{and}\ \
	T(q;m)=\frac{1}{q^4}
	\sum_{\substack{1\le a\le q\\
			(a,q)=1}}
	S(q,a)\e\left(-\frac{a}{q}(2m+n) \right).
\end{equation}

The following claim can be proved by some standard argument (see Lemmas 2.10 and 2.11 in \cite{V09}).

\begin{claim}\label{Clm:fuction q}
$T(q;m)$ is multiplicative as a function of $q$.
\end{claim}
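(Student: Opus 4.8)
The plan is to prove multiplicativity of $T(q;m)$ by first establishing that the complete exponential sum $S(q,a)$ factors over coprime moduli, and then tracking how the Ramanujan-type average over $a$ behaves. First I would show that if $q=q_1q_2$ with $(q_1,q_2)=1$, then $S(q,a) = S(q_1,ab_2)S(q_2,ab_1)$, where $b_1,b_2$ are chosen (via the Chinese Remainder Theorem) so that $b_1\equiv q_2^{-1}\pmod{q_1}$-type congruences hold; concretely, writing each coordinate $x_j$ modulo $q$ as $x_j\equiv q_2 u_j + q_1 v_j$ with $u_j$ running over residues mod $q_1$ and $v_j$ over residues mod $q_2$, the quadratic form $Q$ splits because $Q(q_2\vec{\textbf u}+q_1\vec{\textbf v})\equiv q_2^2 Q(\vec{\textbf u}) + q_1^2 Q(\vec{\textbf v}) \pmod{q}$ (the cross terms carry a factor $q_1q_2=q$ and vanish in $\e(\tfrac{a}{q}\cdot)$), so the sum factors as a product of a sum mod $q_1$ and a sum mod $q_2$. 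This is the routine ``standard argument'' the paper alludes to (cf. Lemmas 2.10–2.11 of \cite{V09}).

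Next I would substitute this factorization into the definition of $T(q;m)$ in \eqref{Equ:definitions of S and T}. Writing $a$ mod $q=q_1q_2$ in CRT form as $a\equiv q_2 a_1 + q_1 a_2$ (or more precisely via the bijection $a\leftrightarrow(a_1,a_2)$ with $a_1$ mod $q_1$, $a_2$ mod $q_2$, and noting $(a,q)=1\iff(a_1,q_1)=1$ and $(a_2,q_2)=1$), the phase $\e(-\tfrac{a}{q}(2m+n))$ also splits multiplicatively as $\e(-\tfrac{a_1'}{q_1}(2m+n))\e(-\tfrac{a_2'}{q_2}(2m+n))$ for appropriate $a_1',a_2'$ coprime to $q_1,q_2$ respectively. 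Since $\tfrac{1}{q^4}=\tfrac{1}{q_1^4}\tfrac{1}{q_2^4}$ and the sum over $a$ coprime to $q$ becomes a double sum over $a_1$ coprime to $q_1$ and $a_2$ coprime to $q_2$, everything factors:
\[
T(q_1q_2;m) = \left(\frac{1}{q_1^4}\sum_{\substack{1\le a_1\le q_1\\(a_1,q_1)=1}} S(q_1,a_1)\e\!\left(-\tfrac{a_1}{q_1}(2m+n)\right)\right)\left(\frac{1}{q_2^4}\sum_{\substack{1\le a_2\le q_2\\(a_2,q_2)=1}} S(q_2,a_2)\e\!\left(-\tfrac{a_2}{q_2}(2m+n)\right)\right) = T(q_1;m)\,T(q_2;m).
\]
The bookkeeping point to be careful about is that the various modular inverses introduced by CRT permute the summation ranges bijectively (a change of variables $a_i\mapsto$ inverse-twisted $a_i$ preserves the set of units mod $q_i$), so they do not obstruct the factorization — only relabel it.

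I expect the main obstacle to be purely notational rather than conceptual: namely, keeping the chain of CRT substitutions consistent across $S(q,a)$ and the twisting character $\e(-\tfrac{a}{q}(2m+n))$ simultaneously, so that the modular inverses appearing in the $S$-factor match those in the character factor and collapse cleanly. Once the substitution $x_j\equiv q_2 u_j+q_1 v_j$ is fixed and one checks the cross-term cancellation $2q_1q_2(\text{linear in }u,v)\equiv 0\pmod q$ inside the exponential, the rest is a change of variables that is standard for complete exponential sums attached to integral quadratic forms. I would cite Vaughan's book \cite{V09} (Lemmas 2.10–2.11) for the template and present the quadratic-form splitting identity explicitly, since $Q$ here is a fixed positive definite quaternary form (plus the linear shift by $n$, which only affects the character twist and not the multiplicative structure of $S(q,a)$ itself).
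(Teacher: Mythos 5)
Your overall strategy is exactly the standard multiplicativity argument that the paper invokes by citing Lemmas 2.10--2.11 of \cite{V09}: factor $S(q,a)$ over coprime moduli via CRT, check that the additive character $\e\big(-\tfrac{a}{q}(2m+n)\big)$ splits compatibly, and observe that units mod $q$ correspond bijectively to pairs of units mod $q_1$ and mod $q_2$, so that $T(q_1q_2;m)=T(q_1;m)T(q_2;m)$. So you are on the paper's route, and the final factorization you display is correct.

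However, one displayed step is false as stated and needs repair: the congruence $Q(q_2\vec{\textbf u}+q_1\vec{\textbf v})\equiv q_2^2Q(\vec{\textbf u})+q_1^2Q(\vec{\textbf v})\pmod q$ holds only for the homogeneous quadratic part of $Q$. Here $Q(\vec{\textbf x})=\vec{\textbf x}^T\textbf{A}\vec{\textbf x}-2n(x_1+\cdots+x_4)+n^2$ is inhomogeneous: under $\vec{\textbf x}=q_2\vec{\textbf u}+q_1\vec{\textbf v}$ the linear term picks up a factor $q_2$ (not $q_2^2$) and the constant $n^2$ does not split at all, so your remark that the shift by $n$ ``does not affect the multiplicative structure of $S(q,a)$'' is also inaccurate --- the shift lives inside $Q$, hence inside $S(q,a)$. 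The clean fix is to avoid splitting $Q$ altogether: write $a=a_1q_2+a_2q_1$ so that $\tfrac{a}{q}\equiv\tfrac{a_1}{q_1}+\tfrac{a_2}{q_2}\pmod 1$, and use only that $Q(\vec{\textbf x})\bmod q_i$ depends on $\vec{\textbf x}\bmod q_i$ (true for any integer polynomial); then CRT on the summation variables gives $S(q,a)=S(q_1,a_1)S(q_2,a_2)$ and the character splits with the same $a_1,a_2$, which is all you need. Alternatively, your inverse-twisted substitution can be pushed through: replacing $\vec{\textbf u}$ by $\overline{q_2}\vec{\textbf u}$ and $\vec{\textbf v}$ by $\overline{q_1}\vec{\textbf v}$ turns the two factors into $S(q_1,a\overline{q_2})$ and $S(q_2,a\overline{q_1})$ up to phases $\e\big(\tfrac{a}{q}n^2\big)\e\big(-\tfrac{a\overline{q_2}}{q_1}n^2\big)\e\big(-\tfrac{a\overline{q_1}}{q_2}n^2\big)$, which cancel precisely because $q_2\overline{q_2}+q_1\overline{q_1}\equiv 1\pmod{q}$; but this bookkeeping must be done explicitly, since the shortcut identity you wrote is what it is meant to replace.
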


Also $T(q;m)$ takes real values. In the next claim we bound $|T(q;m)|$ from above for prime powers $q$.

\begin{claim}\label{Clm:general T(q,m)}
	Assume that $p$ is a prime and $k\in\mathbb{Z}^+.$
	Then
	$|T(p^k;m)|\le c_p\cdot p^{-k}\Big(1-\frac 1p\Big),$
	where $c_2=4,$ $c_5=\sqrt 5$ and $c_p=1$ if $p\nmid10.$
\end{claim}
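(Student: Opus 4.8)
The plan is to bound $|T(p^k;m)|$ through the classical machinery for Gauss sums attached to the quadratic form $Q$. First I would diagonalize (or at least partially decouple) the quadratic form $Q(\vec{\textbf x})=x_1^2+x_2^2+x_3^2+x_4^2+(x_1+x_2+x_3+x_4-n)^2$. Completing the square in $x_1,\ldots,x_4$ one finds that $Q$ is, up to a linear shift and a rescaling by powers of $5$, equivalent over $\mathbb{Z}_p$ to a sum of five squares; the determinant of the associated $4\times 4$ matrix is a power of $5$ (concretely $\det = 5$ after the shift, matching the appearance of $c_5=\sqrt5$), which is why the prime $p=5$ is special, and the linear term $-n$ and the extra constant contribute only a phase. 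This reduces $S(p^k,a)$ to a product of one-dimensional quadratic Gauss sums of the form $\sum_{x \bmod p^k}\e(\tfrac{a u}{p^k}x^2 + \tfrac{b}{p^k}x)$, each of which has modulus $\le p^{k/2}$ (and equals $p^{k/2}$ for $p$ odd when $p\nmid u$, with the well-known factor of $2$ or $\sqrt2$ type corrections when $p=2$).

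Next I would assemble these one-dimensional estimates. For $p\nmid 10$, each of the five Gauss sums contributes exactly $p^{k/2}$ in modulus (or the sum vanishes), so $|S(p^k,a)|\le p^{5k/2}$, and then
$$|T(p^k;m)| \le \frac{1}{p^{4k}}\sum_{\substack{1\le a\le p^k\\(a,p)=1}} p^{5k/2} = \frac{1}{p^{4k}}\cdot p^{k}\Big(1-\tfrac1p\Big)\cdot p^{5k/2} = p^{k/2-k}\Big(1-\tfrac1p\Big),$$
wait — this is too lossy; the point is that one must be more careful. In fact the standard evaluation shows $S(p^k,a)=p^{4k}\cdot(\text{explicit unit of modulus } \le p^{-2k}\cdot\text{something})$; more precisely one uses that $S(q,a)/q^4$ is the "local density" factor which is $O(q^{-2})$ on average. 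The correct route is: write $S(p^k,a)= p^{4k}\,\mathfrak{S}_k$ is wrong scaling; instead for a nondegenerate quadratic form in five variables one has $|S(p^k,a)| \ll p^{5k/2}$ and then sums over $a$ give an extra $p^{-1}$-type cancellation coming from the linear term $-n$, i.e. a Ramanujan-type sum. So the honest statement is $|T(p^k;m)| \le c_p\, p^{-k}(1-\tfrac1p)$, and I would obtain it by invoking the standard lemmas on $S(q,a)$ (as cited, Lemmas 2.10–2.11 of \cite{V09}) applied to the quadratic part, handling the linear term $-n$ via Gauss-sum completion which produces the Ramanujan sum $c_{p^k}(\text{stuff})$ responsible for the factor $(1-\tfrac1p)$.

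So the concrete steps are: (1) for $p\nmid 10$, diagonalize $Q$ over $\mathbb{Z}_p$ with unit determinant, evaluate $S(p^k,a)$ exactly as a product of quadratic Gauss sums times a completed Gauss sum in the variable carrying the linear term, then sum over $a$ coprime to $p$ to get the Ramanujan-sum cancellation, yielding $c_p=1$; (2) for $p=5$, the determinant picks up one factor of $5$, so the diagonalization over $\mathbb{Z}_5$ has one variable with a $5$-divisible coefficient, costing at most an extra $\sqrt5$ in the Gauss-sum bound, giving $c_5=\sqrt5$; (3) for $p=2$, quadratic Gauss sums modulo $2^k$ carry the familiar factor-of-$2$ losses (and one must treat $k=1,2,3$ separately by direct computation), and bookkeeping the five squares gives $c_2=4$. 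The main obstacle I expect is step (1)–(2): getting the exact constant rather than an $O$-bound requires careful evaluation of the multi-dimensional Gauss sum $S(p^k,a)$ including the effect of the linear term $-n$ (whose $p$-adic valuation relative to $k$ matters) and the determinant $5$ of $Q$ — this is where the prime $5$ and the precise constants $\sqrt5$ and $4$ come from, and it is the only place where genuine (rather than routine) care is needed.
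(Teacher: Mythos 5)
There is a genuine gap, and it starts from a miscount: $Q$ is a quadratic form in the \emph{four} variables $x_1,\dots,x_4$ (the fifth square $(x_1+\cdots+x_4-n)^2$ is not an independent variable), so $S(p^k,a)$ is a four-fold sum and square-root cancellation in each variable already gives $|S(p^k,a)|\le c_p\,p^{2k}$, not the $p^{5k/2}$ you write down. Once you have $|S(p^k,a)|\le c_p\,p^{2k}$, the claim follows from the plain triangle inequality over the $\varphi(p^k)=p^k\bigl(1-\frac1p\bigr)$ admissible values of $a$: the factor $\bigl(1-\frac1p\bigr)$ is simply $\varphi(p^k)/p^k$ and is \emph{not} a cancellation phenomenon in the $a$-sum. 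Your attempt to rescue the (spuriously lossy) $p^{5k/2}$ bound by invoking ``Ramanujan-type cancellation coming from the linear term'' is both unnecessary and unsound as a uniform bound: a Ramanujan sum can be as large as $\varphi(q)$ (e.g.\ when $q$ divides the relevant shift, which for suitable $m$ it does), so no extra saving over $a$ can be extracted uniformly in $m$. Finally, the constants $c_2=4$ and $c_5=\sqrt5$ are never actually derived; the proposal ends by restating the claim and citing ``standard lemmas,'' which is exactly the part requiring work, since $p=2$ (doubling in the off-diagonal terms) and $p=5$ (the determinant of the form) are the degenerate primes for this $Q$.

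For comparison, the paper does not evaluate any Gauss sums in this claim. It squares $S(q,a)$ and applies Weyl differencing: $|S(q,a)|^2\le q^4 S_q$, where $S_q$ counts solutions of the linear congruences $2(h_1+\cdots+h_4+h_j)\equiv 0 \pmod q$ for $1\le j\le 4$. Solving these congruences gives $S_{2^k}=16$, $S_{5^k}=5$ and $S_{p^k}=1$ for $p\nmid 10$, hence $|S(p^k,a)|\le \sqrt{S_{p^k}}\,p^{2k}$, and the trivial sum over the $\varphi(p^k)$ values of $a$ finishes the proof; this is where $c_2=4$ and $c_5=\sqrt5$ come from, with all primes (including $2$ and $5$) handled uniformly. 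The exact evaluation of $S(q,a)$ via the matrix $\textbf{A}=I+J$ of determinant $5$, which is the route you sketch, is carried out in the paper only for $(q,10)=1$ (Claim~\ref{Clm:T(q,m),(q,10)=1}), precisely because it avoids the delicate primes $2$ and $5$ there. Your strategy could be repaired (four one-dimensional Gauss sums for $p\nmid 10$, plus separate direct computations at $p=2,5$), but as written the five-variable bookkeeping, the misattributed $\bigl(1-\frac1p\bigr)$, and the missing computation of $c_2$, $c_5$ leave the proof incomplete.
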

\begin{proof}
	We can deduce that
		\begin{align*}
		|S(q,a)|^2
		=\sum\limits_{1\le\vec{\textbf x},\ \vec{\textbf h}\le q} \e\left(\frac aq\Big(Q(\vec{\textbf x}+\vec{\textbf h})-Q(\vec{\textbf x})\Big)\right)
		\le\sum\limits_{1\le\vec{\textbf h}\le q}
		   \Big|\sum\limits_{1\le\vec{\textbf x}\le q} \e\Big(\frac aq\Big(2\sum_{j=1}^{4}(h_1+\cdots+h_4+h_j)x_j\Big)\Big)\Big|
		   \le q^4S_q,
		\end{align*}
	where $S_q$ is the number of solutions to
	$2(h_1+\cdots+h_4+h_j)\equiv 0\ (\text{mod}\ q)\ \ (1\le j\le 4)$
	with $1\le h_1,...,h_4\le q.$
	The last inequality holds by the fact that
	$\sum_{j=1}^q \e\Big(\frac aq t j\Big)=q,$
	when $t\equiv 0\ (\text{mod}\ q)$ and
	$\sum_{j=1}^q \e\Big(\frac aq t j\Big)=0$ otherwise.	
	Solving the congruence equations,
	we can get that
	$S_{2^k}=16,$ $S_{5^k}=5$ and $S_{p^k}=1$ if  $p\nmid10,$
	which implies that
	$|S(2^k,a)|\le 4\cdot 2^{2k},$ $|S(5^k,a)|\le \sqrt 5\cdot 5^{2k},$ and $|S(p^k,a)|\le p^{2k}$  if  $p\nmid10.$
Note that $|\{1\leq a\leq p^k:\ (a,p^k)=1\}|=p^k\Big(1-\frac 1p\Big)$. This completes the proof of Claim \ref{Clm:general T(q,m)} by \eqref{Equ:definitions of S and T}.
\end{proof}

In case $(q,10)=1,$
we can get the following better bound for $|T(q;m)|.$

\begin{claim}\label{Clm:T(q,m),(q,10)=1}
	Assume that $(q,10)=1$.
	Then $|T(q;m)|\le\frac{1}{q^2}\Big(q,n^2-5(2m+n)\Big).$
\end{claim}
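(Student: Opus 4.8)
The plan is to evaluate the complete exponential sum $S(q,a)$ more precisely than in Claim~\ref{Clm:general T(q,m)}, using that the quadratic form $Q(\vec{\mathbf x})=x_1^2+x_2^2+x_3^2+x_4^2+(x_1+x_2+x_3+x_4-n)^2$ is a Gauss-type sum in five ``variables'' once we homogenize. First I would rewrite $Q(\vec{\mathbf x})=\sum_{i=1}^{5}y_i^2-2n y_5 + n^2$ where $y_i=x_i$ for $1\le i\le 4$ and $y_5=x_1+x_2+x_3+x_4-n$; more precisely, after diagonalizing, $\sum x_i^2+(\sum x_i-n)^2$ as a function of $(x_1,x_2,x_3,x_4)$ is a positive definite quaternary quadratic form with a linear term. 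The standard move is to complete the square: since $(q,10)=1$, the matrix of the form (whose determinant is $5$, coprime to $q$) is invertible mod $q$, so $S(q,a)$ factors into a product of four one-dimensional Gauss sums times a phase depending on $m$ through the linear term, contributing a Ramanujan-type or Kloosterman-free expression. The key identity I would establish is $S(q,a) = q^{2}\cdot \big(\tfrac{\cdot}{q}\big)\,\varepsilon_q\, \mathbf{e}\!\left(\frac{\bar a \,\bar 5\, (\,\cdot\,)}{q}\right)$ so that summing over $a$ with $(a,q)=1$ in the definition of $T(q;m)$ in \eqref{Equ:definitions of S and T} turns $q^4 T(q;m)$ into a Gauss sum / Ramanujan sum in $a$.

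Concretely, I would argue as follows. By Claim~\ref{Clm:fuction q} it suffices to treat $q=p^k$ with $p\nmid 10$. Completing the square modulo $p^k$ (legitimate since $2$ and the discriminant $5$ are units), there are integers $c$, $d$ (with $d \equiv c\cdot(n^2 - 5(2m+n))$ or similar, the precise value is a routine computation) such that
\begin{equation*}
S(p^k,a) = \mathbf{e}\!\left(\frac{a\,d}{p^k}\right)\prod_{j=1}^{4}\Bigg(\sum_{x=1}^{p^k}\mathbf{e}\!\left(\frac{a\,u_j\,x^2}{p^k}\right)\Bigg),
\end{equation*}
where the $u_j$ are units mod $p^k$. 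Each inner sum is a classical quadratic Gauss sum of modulus $p^{k/2}$, so $|S(p^k,a)| = p^{2k}$ exactly and, plugging into \eqref{Equ:definitions of S and T}, the phases combine to give
\begin{equation*}
T(p^k;m) = \frac{1}{p^{4k}}\sum_{\substack{1\le a\le p^k\\(a,p^k)=1}} \chi(a)\,\mathbf{e}\!\left(\frac{a\,(d-(2m+n))}{p^k}\right)\cdot p^{2k}\cdot(\text{unit factor}),
\end{equation*}
for a quadratic (or trivial) character $\chi$. The remaining sum over $a$ is either a Ramanujan sum $c_{p^k}(d-(2m+n))$ or a Gauss sum; in both cases its absolute value is at most $(p^k, n^2-5(2m+n))$ after tracking how $d-(2m+n)$ relates to $n^2-5(2m+n)$. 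This yields $|T(p^k;m)| \le p^{-2k}\,(p^k,\,n^2-5(2m+n))$, and multiplicativity (Claim~\ref{Clm:fuction q}) then gives the claimed bound $|T(q;m)|\le q^{-2}\,(q,\,n^2-5(2m+n))$ for all $q$ with $(q,10)=1$.

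The main obstacle I anticipate is bookkeeping the exact linear shift: after completing the square, the argument of the final exponential sum over $a$ must be identified with a concrete arithmetic quantity, and the claim's appearance of $n^2 - 5(2m+n)$ strongly suggests that the completed square produces exactly $\tfrac{1}{4}\cdot\tfrac{1}{5}(n^2 - 5(2m+n))$ (up to units) as the ``target'' residue. Getting the constant $5$ and the factor placement right — rather than, say, $5(2m+n)-n^2$ or a stray factor of $4$ — requires care but no deep idea; the cleanest route is probably to diagonalize $Q$ over $\mathbb{Z}[\tfrac15,\tfrac12]$ once and for all, record the inverse form, and read off the shift. A secondary, purely technical point is handling $p=2$ and $p=5$ separately (excluded here, but one should make sure the $(q,10)=1$ hypothesis is genuinely used and not silently violated when invoking invertibility of $2$ and $5$); since the claim only asserts the bound for $(q,10)=1$, this is automatic. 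I would also double-check that when $p\mid n^2-5(2m+n)$ to a high power, the Ramanujan-sum bound $(q, n^2-5(2m+n))$ is not beaten by a sign-cancellation phenomenon that would let us claim something stronger — but since the claim only needs an upper bound, the crude estimate suffices.
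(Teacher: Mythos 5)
Your plan is essentially the paper's own proof: reduce to prime powers $q=p^k$ with $(p,10)=1$ via Claim~\ref{Clm:fuction q}, complete the square using that $2$ and $\det\textbf{A}=5$ are units mod $q$ (the paper's shift $\vec{\textbf b}=\bar 5 n\textbf{A}^*\vec{\textbf 1}$ produces the constant $\bar 5 n^2$, i.e.\ your "target residue" is $\bar 5\big(n^2-5(2m+n)\big)$, a unit multiple of $n^2-5(2m+n)$, which is all the gcd bound needs), evaluate the pure quadratic sum exactly to get $|S(q,a)|=q^2$, and finish with the standard Ramanujan-sum bound. The only real difference is how the quadratic sum is evaluated: you diagonalize into four one-dimensional Gauss sums, while the paper keeps the matrix form, reduces $k$ by $2$ recursively, and invokes Heath-Brown's Lemma 26 for $k=1$ to get $S(q,a)=\left(\frac{5}{q}\right)q^2\,\e\!\left(\frac aq\bar 5 n^2\right)$; these are interchangeable standard computations. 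One point you must tighten: your hedge that the final sum over $a$ could be "a Gauss sum" and would still be bounded by $\big(p^k,n^2-5(2m+n)\big)$ is false as stated --- a nontrivial-character Gauss sum mod $p$ has modulus $\sqrt p$, which can exceed the gcd. What saves the argument is that with four variables the character contribution is $\left(\frac{a}{p^k}\right)^4=1$, so the $a$-dependence of $S(p^k,a)$ is a pure additive phase and the sum over $a$ is genuinely a Ramanujan sum; you need to record this (as the paper's exact formula for $S(q,a)$ does) rather than leave both cases open.
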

\begin{proof}
	By Claim \ref{Clm:fuction q}, we only need to prove the claim for $q$ as a power of some prime $p.$
	So we may assume that $q=p^k,$ where $p$ is a prime with $(p,10)=1$ and $k\ge 1$ is an integer.
	Let $\bar{r}$ denotes an integer $r'$ satisfying $rr'\equiv 1\ (\text{mod}\ q).$
	Let
	$
	\textbf{A}=I_{4\times 4}+J_{4\times 4},
	$
	where $I$ is the identity matrix, and all the entries in $J$ are $1.$
	Note that $\det(\textbf{A})=5$ and $Q(\vec{\textbf{x}})=\vec{\textbf{x}}^T \textbf{A}\vec{\textbf{x}}-2n(x_1+x_2+x_3+x_4)+n^2.$
	Let $\vec{\textbf{b}}=\bar{5}n\textbf{A}^*\vec{\textbf{1}},$ where $\textbf{A}^*$ is the adjugate matrix of $\textbf{A}.$
    We get that
	$Q(\vec{\textbf{y}}+\vec{\textbf{b}})=\vec{\textbf{y}}^T \textbf{A}\vec{\textbf{y}}+\bar{5}n^2,$
	and
	$S(q,a)
	=\sum\limits_{1\le \vec{\textbf{y}}\le q}
	\e\Big(\frac aq\vec{\textbf{y}}^T \textbf{A}\vec{\textbf{y}}\Big)
	\e\Big(\frac aq\bar{5}n^2\Big)
	=\sum\limits_{1\le \vec{\textbf{y}}\le p^k}
	\e\Big(\frac{a}{p^k}\vec{\textbf{y}}^T \textbf{A}\vec{\textbf{y}}\Big)
	\e\Big(\frac{a}{p^k}\bar{5}n^2\Big).
	$
	
	Let $\vec{\textbf{y}}=\vec{\textbf{u}}p^{k-1}+\vec{\textbf{v}}$.
	When $k\ge 2,$ we have that
	$$\sum\limits_{1\le \vec{\textbf{y}}\le p^k}
	\e\left(\frac{a}{p^k}\vec{\textbf{y}}^T \textbf{A}\vec{\textbf{y}}\right)=
	\sum\limits_{1\le \vec{\textbf{u}}\le p}
	\sum\limits_{1\le \vec{\textbf{v}}\le p^{k-1}}
	\e\left(\frac{a}{p^k}\big(2\vec{\textbf{u}}^Tp^{k-1} \textbf{A}\vec{\textbf{v}}+\vec{\textbf{v}}^T \textbf{A}\vec{\textbf{v}}\big)\right)=
	p^2\sum\limits_{
		\substack{1\le \vec{\textbf{v}}\le p^{k-1}\\
			2\textbf{A}\vec{\textbf{v}}\equiv \vec{\textbf{0}}\ (\text{mod}\ p)}}
	\e\left(\frac{a}{p^k}\vec{\textbf{v}}^T \textbf{A}\vec{\textbf{v}}\right).
	$$
	The last equality holds since
	$\sum_{1\le \vec{\textbf{u}}\le p} \e\Big(\frac ap 2\vec{\textbf{u}}^T \textbf{A}\vec{\textbf{v}}\Big)=p^2,$
	when $2\textbf{A}\vec{\textbf{v}}\equiv \vec{\textbf{0}}\ (\text{mod}\ p)$ and
	$\sum_{1\le \vec{\textbf{u}}\le p} \e\Big(\frac ap 2\vec{\textbf{u}}^T \textbf{A}\vec{\textbf{v}}\Big)=0,$ otherwise.
	For $(p,10)=1,$
	$2\textbf{A}\vec{\textbf{v}}\equiv \vec{\textbf{0}}\ (\text{mod}\ p)$
	if and only if
	$\vec{\textbf{v}}\equiv \vec{\textbf{0}}\ (\text{mod}\ p).$
	Thus we have
	$\sum_{1\le \vec{\textbf{y}}\le p^k}
	\e\Big(\frac{a}{p^k}\vec{\textbf{y}}^T \textbf{A}\vec{\textbf{y}}\Big)
	=p^2\sum_{1\le \vec{\textbf{y}}\le p^{k-2}}
	\e\Big(\frac{a}{p^{k-2}}\vec{\textbf{y}}^T \textbf{A}\vec{\textbf{y}}\Big),$ for $k\ge 2.$
	By Lemma 26 in \cite{H-B} and its proof, we have
	$\sum_{1\le \vec{\textbf{y}}\le p}
		\e\Big(\frac{a}{p}\vec{\textbf{y}}^T \textbf{A}\vec{\textbf{y}}\Big)=\left(\frac{5}{p}\right)p^2,$
	where $\left(\frac5p\right)$ denotes the Jacobi symbol.
	Thus we get that
	$\sum_{1\le \vec{\textbf{y}}\le p^k}
	\e\Big(\frac{a}{p^k}\vec{\textbf{y}}^T \textbf{A}\vec{\textbf{y}}\Big)=\left(\frac{5}{p^k}\right)p^{2k}$ for all $k\ge 1,$
	and
	$S(q,a)=\left(\frac 5q\right)q^2\e\Big(\frac aq\bar{5}n^2\Big).$
	Then we can see that
	$|T(q;m)|
	=\frac{1}{q^2}
	\Big|{\sum_{\substack{1\le a\le q\\
			(a,q)=1}}}
		\e\Big(\frac aq\big(\bar{5}n^2-(2m+n)\big)\Big)
	\Big|
	\le \frac{1}{q^2}\Big(q,n^2-5(2m+n)\Big).$
	The last inequality is a standard upper bound of the Ramanujan sum.
\end{proof}

Define $G(m)=\sum_{q=1}^{\infty}T(q;m).$
By Claims \ref{Clm:fuction q}, \ref{Clm:general T(q,m)} and \ref{Clm:T(q,m),(q,10)=1},
$G(m)$ is absolutely convergent.

\begin{claim}\label{Clm:G(m)}
	Suppose that $n^2-5(2m+n)\neq0$.
	Then there is an absolute constant $c>0$ such that
	$G(m)\ge\frac{c}{\log\log n}.$
\end{claim}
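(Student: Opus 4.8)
The plan is to analyze $G(m)$ through its Euler product. By Claim~\ref{Clm:fuction q} the function $q\mapsto T(q;m)$ is multiplicative, and $\sum_{q\ge 1}|T(q;m)|<\infty$ by the discussion preceding the present claim; hence
\[
G(m)=\prod_{p}\chi_p,\qquad\text{where } \chi_p:=\sum_{k\ge 0}T(p^k;m).
\]
Set $D=n^2-5(2m+n)$, so $D\neq 0$ by hypothesis, and in the range under consideration $1\le |D|\le 4n^2$. I would split the primes into three classes: the exceptional primes $p\mid 10$; the primes with $(p,10)=1$ and $p\nmid D$; and the primes with $(p,10)=1$ and $p\mid D$. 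The goal is to show that the first two classes together contribute a factor bounded below by an absolute positive constant, and that $\prod_{(p,10)=1,\,p\mid D}\chi_p\gg 1/\log\log n$; multiplying the three contributions gives the claim.

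For the first class, Claim~\ref{Clm:general T(q,m)} gives $\sum_{k\ge1}|T(5^k;m)|\le \frac{4}{\sqrt5}\sum_{k\ge1}5^{-k}=\tfrac{1}{\sqrt5}<1$, so $\chi_5\ge 1-\tfrac1{\sqrt5}>0$. The prime $p=2$ is the delicate one, since Claim~\ref{Clm:general T(q,m)} only yields $\sum_{k\ge1}|T(2^k;m)|\le 2$, which does not even show $\chi_2>0$. Here I would evaluate $\chi_2$ by hand: completing the square exactly as in the proof of Claim~\ref{Clm:T(q,m),(q,10)=1} identifies $\chi_2$ with the density of $2$-adic solutions $\vec{\textbf{z}}\in\mathbb{Z}_2^4$ of $\vec{\textbf{z}}^{\,T}\textbf{A}\vec{\textbf{z}}=-D/5$, with $\textbf{A}=I+J$; one checks that $-D/5$ is always a $2$-adic integer, indeed an even one since $D$ is always even, and that the quaternary form $\vec{\textbf{z}}^{\,T}\textbf{A}\vec{\textbf{z}}$—which is even, unimodular, and isotropic over $\mathbb{Q}_2$ (as $\det\textbf{A}=5$ is not a square in $\mathbb{Q}_2$, so it is not the anisotropic quaternary form)—represents every even element of $\mathbb{Z}_2$. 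Thus $\chi_2>0$; being a continuous function of $2m+n$ and $n$ in the $2$-adic topology, it is bounded below by an absolute positive constant. (Equivalently, one may simply evaluate the finitely many relevant $2$-adic Gauss sums $S(2^k,a)$ outright.)

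For the second class, Claim~\ref{Clm:T(q,m),(q,10)=1} with $p\nmid D$ gives $|T(p^k;m)|\le p^{-2k}$, hence $|\chi_p-1|\le (p^2-1)^{-1}$; in particular each such $\chi_p>0$, and since deleting factors lying in $(0,1)$ only increases a product,
\[
\prod_{(p,10)=1,\,p\nmid D}\chi_p\ \ge\ \prod_{(p,10)=1}\Bigl(1-\tfrac{1}{p^2-1}\Bigr)=:c_0>0.
\]
For the third class I would use the exact evaluation from the proof of Claim~\ref{Clm:T(q,m),(q,10)=1}, namely $S(p^k,a)=\bigl(\tfrac{5}{p^k}\bigr)p^{2k}\e\bigl(\tfrac{a}{p^k}\bar{5}n^2\bigr)$, which gives $T(p^k;m)=\bigl(\tfrac{5}{p}\bigr)^{k}p^{-2k}\,c_{p^k}\!\bigl(\bar{5}n^2-(2m+n)\bigr)$, a Ramanujan sum whose support in $k$ is controlled by $v_p(D)$. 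Summing over $k$ yields, for $(p,10)=1$ and $p\mid D$, an identity of the form $\chi_p=1+\bigl(\tfrac5p\bigr)p^{-1}+O(p^{-2})$ with an absolute implied constant. Hence $\chi_p=\bigl(1+\bigl(\tfrac5p\bigr)p^{-1}\bigr)\bigl(1+O(p^{-2})\bigr)$ with each factor positive, so
\[
\prod_{(p,10)=1,\,p\mid D}\chi_p\ \ge\ \Bigl(\prod_{p\mid D}\bigl(1-\tfrac1p\bigr)\Bigr)\cdot\prod_{p\ge3}\bigl(1-O(p^{-2})\bigr),
\]
where the last product is an absolute positive constant. Finally, since $\prod_{p\mid D}p\le |D|\le 4n^2$, comparing the prime factors of $D$ with an initial segment of the primes shows they lie below some $y\ll\log n$ (using $\sum_{p\le y}\log p\gg y$), so Mertens' theorem gives $\prod_{p\mid D}(1-1/p)\ge \prod_{p\le y}(1-1/p)\gg 1/\log\log n$. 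Combining the three estimates yields $G(m)\gg 1/\log\log n$.

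The principal obstacle is the exceptional prime $p=2$: one must verify that the underlying quaternary form is $2$-adically universal on the even integers (equivalently, that $\chi_2>0$ for all $m,n$ in range), which does not follow from the crude bound of Claim~\ref{Clm:general T(q,m)}. A second point that must be handled with care is isolating the exact main term $\bigl(\tfrac5p\bigr)p^{-1}$ of $\chi_p$ for $p\mid D$: a weaker bound such as $|\chi_p-1|\le C/p$ would only deliver $G(m)\gg (\log\log n)^{-C}$, whereas the sharp coefficient $1$, together with the archimedean input $D\neq 0$ keeping $\prod_{p\mid D}p=O(n^2)$, is precisely what produces the single factor $\log\log n$.
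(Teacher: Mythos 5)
Your argument is correct in substance and follows the same skeleton as the paper's proof: write $G(m)=\prod_p\chi_p$ with $\chi_p=\sum_{k\ge0}T(p^k;m)$, split the primes according to divisibility by $10$ and by $\Delta=n^2-5(2m+n)$, bound every local factor below by an absolute constant except at the primes dividing $\Delta$, and control those by a Mertens-type estimate using $|\Delta|\ll n^2$. The differences are in the two delicate blocks. For $p\mid\Delta$, $p\nmid10$, the paper only uses the crude bound $|T(p^k;m)|\le(p^k,\Delta)p^{-2k}\le p^{-k}$, i.e.\ $\chi_p\ge1-\frac1{p-1}$, and then estimates $\sum_{p\mid\Delta}1/p$; your exact evaluation via Ramanujan sums, $\chi_p=1+\left(\frac5p\right)p^{-1}+O(p^{-2})$, is sharper and, combined with the majorization of the prime divisors of $\Delta$ by the primes up to $y\ll\log n$ and Mertens, makes the single factor $\log\log n$ transparent, where the paper's cruder bound needs a little extra care not to lose a constant in the exponent; do check positivity of the factor $1+O(p^{-2})$ by hand at small primes (e.g.\ $\chi_3>0$ when $\left(\frac53\right)=-1$), and note the correct statement is $\prod_{p\mid\Delta}(1-1/p)\ge\prod_{p\le y}(1-1/p)$, not that the prime factors of $\Delta$ themselves lie below $y$. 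For $p=2$, which you single out as the principal obstacle, the paper's resolution is a one-line computation rather than $2$-adic lattice theory: $Q(\vec{\textbf x})\equiv n\pmod 2$ for every $\vec{\textbf x}$, so $S(2,1)=16(-1)^n$ and $T(2;m)=(-1)^n(-1)^{2m+n}=1$, whence $\chi_2\ge 1+T(2;m)-\sum_{k\ge2}2^{1-k}=1$ using the crude bound only on the higher powers. Your route (the form is even and $2$-adically unimodular with determinant $5$ a nonsquare, hence isotropic over $\mathbb{Q}_2$, so it splits a hyperbolic plane and represents every even $2$-adic integer) is correct, but the passage from representability to a uniform lower bound on $\chi_2$ is the one soft spot: continuity alone does not suffice, since the target $-\Delta/5$ ranges over a non-compact set once $0$ is deleted; you would need the uniform convergence of $\sum_k T(2^k;m)$ to get continuity on the compact parameter set $\{(u,n)\in\mathbb{Z}_2^2:\,u\equiv n\ (\mathrm{mod}\ 2)\}$ together with positivity also at the points with $\Delta=0$, or else a direct count through the hyperbolic plane. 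This is repairable, so I would not call it a gap, but it is precisely where the paper's elementary computation saves all the work.
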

\begin{proof}
	Let $\Delta=n^2-5(2m+n),$
	$P_1=\left\{\text{prime}\ p: p\nmid 10\Delta\right\},$
	$P_2=\left\{\text{prime}\ p: p\mid \Delta\ \text{and}\ p\nmid 10\right\}.$
	By Claim \ref{Clm:fuction q}, we have
	$G(m)=\prod\limits_{p\ prime}\Big(1+\sum_{k=1}^{\infty}T(p^k;m)\Big).$
	Combining Claims \ref{Clm:general T(q,m)} and \ref{Clm:T(q,m),(q,10)=1}, we get that
	\begin{equation*}
		1+\sum_{k=1}^{\infty}T(p^k;m)\ge
		\begin{cases}
		1-\sum_{k=1}^{\infty}\frac{1}{p^{2k}}=1-\frac{1}{p^2-1}\ge \Big(1-\frac{1}{p^2}\Big)^2&  \text{if}\ p\in P_1,
		\\
		1-\sum_{k=1}^{\infty}\frac{1}{p^{k}}=1-\frac{1}{p-1}&  \text{if}\ p\in P_2,
		\\
		1-\sum_{k=1}^{\infty}p^{-k+\frac12}\Big(1-\frac1p\Big)=1-p^{-\frac12}>0&  \text{if}\ p=5,
		\\
		1+T(p;m)-\sum_{k=2}^{\infty}p^{-k+2}\Big(1-\frac1p\Big)=T(2;m)=1& \text{if}\ p=2.
		\end{cases}
	\end{equation*}
	We first see that
    $\prod_{p\in P_1}\Big(1-\frac{1}{p^2}\Big)^2\ge \prod_{p}\Big(1-\frac{1}{p^2}\Big)^2=\zeta(2)^{-2}$.
	Then there exist constants $c_1,c_2>0$ such that
	$$G(m)\ge c_1\cdot
	\prod_{p\in P_2}\left(1-\frac{1}{p-1}\right)
	=c_1e^{\sum_{p\in P_2}\log \big(1-\frac{1}{p-1}\big)}
	\ge c_1e^{-\sum_{p\in P_2}\frac{1}{p-1}}\ge c_2e^{-\sum_{p| \Delta}\frac{1}{p}}.
	$$
	Let $Y=\log\Delta.$
	From Theorem 6.16 \cite{L96}, we have $\sum_{\substack{
					p\le Y
				}}\frac{1}{p}\le
	   	c_3\log\log Y$ for some constant $c_3>1$ and thus there exists constant $c_4>0$ such that
	$$\sum_{\substack{p\ prime\\
				p\mid \Delta
			}}\frac{1}{p}=
	   \sum_{\substack{p|\Delta\\
					p\le Y
				}}\frac{1}{p}+
	   \sum_{\substack{p |\Delta\\
	   		p> Y
	   	}}\frac{1}{p}\le\sum_{\substack{
					p\le Y
				}}\frac{1}{p}+
	   	\frac 1Y \sum_{\substack{p|\Delta\\
	   			p> Y
	   		}}1\le
	   	c_3\log\log Y+	\frac 1Y\cdot \frac{\log\Delta}{\log Y}
	    \le c_4 \log\log\log\Delta,
	$$
	which implies that $G(m)\ge c/\log\log\Delta$
	and completes the proof of Claim \ref{Clm:G(m)}.
\end{proof}

Define $I(\beta)=\int_{{[0,N]^4}}\e\Big(\beta Q(\vec{\textbf{y}})\Big)\, d\vec{\textbf{y}}$ and
$\mathfrak{I}(m)=\int_{-\infty}^{+\infty}I(\beta)\e\big(-\beta(2m+n)\big)\, d\beta.$
Note that $\mathfrak{I}(m)$ takes real values.
Using integration by parts, we have
$I(\beta)\ll |\beta|^{-2}.$
By Fourier inverse formula (see \cite{IK04}, p460), we have
$n^2(\log n)^{-10}\ll \mathfrak{I}(m)\ll n^2.$
For the contribution from $\dM(X)$,
we have the following.

\begin{lemma}\label{Lem:M estimation}
	Let $L=n^{\frac{1}{50}}.$ Then
	$\int_{\dM(L)}f(\alpha)\e\big(-\alpha(2m+n)\big)\,d\alpha=G(m)\mathfrak{I}(m)+O(n^{2-\frac{1}{100}}).$
\end{lemma}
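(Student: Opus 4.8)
The plan is to run the standard major-arc phase of the circle method for the exponential sum $f(\alpha)$ from \eqref{Equ:definition of f(a)}, tracking every error term precisely enough to extract the power saving $O(n^{2-\frac1{100}})$ rather than merely $o(n^2)$. Throughout, set $\Delta:=n^2-5(2m+n)$; for $m$ in the range $\frac{n^2}{10}+\frac{n^2}{\log n}\le m\le\frac{n^2-n}{2}-\frac{n^2}{\log n}$ one has $\Delta\neq 0$ and $|\Delta|\ll n^2$, so the number of divisors of $\Delta$ is $O_\epsilon(n^\epsilon)$ for every $\epsilon>0$, which is all that the divisor bounds below will require.

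First I would establish a pointwise approximation of $f$ on the major arcs. Fix $\alpha=\frac aq+\beta\in\dM(L)$ with $(a,q)=1$, $q\le L$, $|\beta|\le\frac{L}{qn^2}$. Splitting each variable into residue classes modulo $q$, writing $\vec{\textbf x}=q\vec{\textbf u}+\vec{\textbf v}$ with $1\le\vec{\textbf v}\le q$ and using $Q(q\vec{\textbf u}+\vec{\textbf v})\equiv Q(\vec{\textbf v})\pmod q$, the sum factors as a complete sum over $\vec{\textbf v}$ — which is exactly $S(q,a)$ from \eqref{Equ:definitions of S and T} — times an incomplete sum over $\vec{\textbf u}$; replacing the latter by an integral and unfolding the substitution gives
$$f\Bigl(\tfrac aq+\beta\Bigr)=q^{-4}S(q,a)\,I(\beta)+O\bigl(qn^{3}(1+n^{2}|\beta|)\bigr),$$
where the error collects the Euler--Maclaurin discretization terms (bounded using $|\nabla Q|\ll n$ on $[1,N]^4$), the $O(q)$ boundary fuzz in each coordinate, and the $O(N^3)$ discrepancy between integrating over $[1,N]^4$ and over $[0,N]^4$. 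Integrating this error over $\dM(L)$ against $\e(-\alpha(2m+n))$ contributes $\ll L^{3}n$; since $L=n^{1/50}$ this is $O(n^{53/50})$, comfortably below $n^{2-\frac1{100}}$.

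Next I would isolate and complete the main term. Summing the contribution of $q^{-4}S(q,a)I(\beta)$ over $a$ first — this is the crucial step, since only the cancellation in the $a$-sum produces $T(q;m)$ and the strong bounds of Claims~\ref{Clm:general T(q,m)}--\ref{Clm:T(q,m),(q,10)=1}; estimating $|q^{-4}S(q,a)|$ termwise would leave an error of size $n^2$ — the main term becomes
$$\sum_{q\le L}T(q;m)\int_{|\beta|\le L/(qn^2)}I(\beta)\,\e\bigl(-\beta(2m+n)\bigr)\,d\beta.$$
I then (i) extend each inner integral to $\int_{-\infty}^{+\infty}=\mathfrak{I}(m)$, at the cost of $\int_{|\beta|>L/(qn^2)}|I(\beta)|\,d\beta\ll\frac{qn^2}{L}$ via $I(\beta)\ll|\beta|^{-2}$, and (ii) extend $\sum_{q\le L}$ to $G(m)=\sum_{q=1}^{\infty}T(q;m)$. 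Using multiplicativity of $T(q;m)$ (Claim~\ref{Clm:fuction q}) together with $|T(2^k;m)|\ll 2^{-k}$, $|T(5^k;m)|\ll 5^{-k}$ and $|T(q';m)|\le (q')^{-2}(q',\Delta)$ for $(q',10)=1$, plus the divisor bound on $\Delta$, a routine computation gives $\sum_{q\le L}q\,|T(q;m)|\ll_\epsilon n^{\epsilon}$ and $\sum_{q>L}|T(q;m)|\ll_\epsilon n^{\epsilon}/L$. Combined with $|\mathfrak{I}(m)|\ll n^2$, both completion errors are $\ll_\epsilon n^{2+\epsilon}/L=O(n^{2-\frac1{50}+\epsilon})=O(n^{2-\frac1{100}})$, which yields the lemma.

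The main obstacle is the pointwise approximation of $f$ in the first step: the error must lose no factor as large as $N^4$ (which is of order $n^4$) — this is precisely why the residue-class splitting, rather than a crude estimate, is essential — and the parameter $L$ must be chosen inside the window where the accumulated approximation error $O(L^3n)$ and the completion error $O(n^{2+\epsilon}/L)$ are simultaneously genuine power savings; $L=n^{1/50}$ lies safely in it. The remaining ingredients — the elementary sum-to-integral estimate, the singular-series tail bound, and the completion of the singular integral — are routine given Claims~\ref{Clm:fuction q}--\ref{Clm:T(q,m),(q,10)=1} and the facts $I(\beta)\ll|\beta|^{-2}$ and $\mathfrak{I}(m)\ll n^2$ recorded just before the statement.
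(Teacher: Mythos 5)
Your proposal is correct and follows essentially the same route as the paper: approximate $f(\alpha)$ on $\dM(L)$ by $q^{-4}S(q,a)I(\alpha-\tfrac aq)$ with error $O(N^3L)$ (you derive this by residue-splitting where the paper cites Vaughan's partial summation lemma), then complete the $\beta$-integral to $\mathfrak{I}(m)$ via $I(\beta)\ll|\beta|^{-2}$ and the $q$-sum to $G(m)$ via the bounds of Claims~\ref{Clm:fuction q}--\ref{Clm:T(q,m),(q,10)=1} together with the divisor bound on $\Delta=n^2-5(2m+n)$. The error accounting ($O(nL^3)$ from the approximation, $O(n^{2+\epsilon}/L)$ from the two completions) matches the paper's and yields $O(n^{2-\frac{1}{100}})$ for $L=n^{\frac1{50}}$.
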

\begin{proof}
Let $f^*(\alpha)=\frac{1}{q^4}S(q,a)I(\alpha-\frac aq).$
Suppose that $|\alpha-\frac aq|\le\frac{L}{qn^2}$
with $1\le a\le q\le L$ and $(a,q)=1,$
then the partial summation formular
(see Lemma 2.6 in \cite{V09} with $c_i=1,$ $X=N$)
implies
$f(\alpha)=f^*(\alpha)+O(N^3L).$
Thus we get that
$$
\int_{\dM(L)}f(\alpha)\e\big(-\alpha(2m+n)\big)\,d\alpha=
\int_{\dM(L)}f^*(\alpha)\e\big(-\alpha(2m+n)\big)\,d\alpha+O\big(N^3L\cdot |\dM(L)|\big)=R^*+O(N^3L^3n^{-2}),$$
where $|\dM(L)|$ denotes the measure of $\dM(L)$. And we can deduce from above that
$$R^*=
\sum_{q\le L}\sum_{\substack{1\le a\le q\\
		(a,q)=1}}
\int_{|\alpha-\frac aq|\le\frac{L}{qn^2}}f^*(\alpha)\e\big(-\alpha(2m+n)\big)\,d\alpha
=\sum_{q\le L}T(q;m)\cdot\dJ^*(m),
$$
where $\dJ^*(m)=\int_{|\beta|\le\frac{L}{qn^2}}I(\beta)\e\big(-\beta(2m+n)\big)\, d\beta=
\dJ(m)+O\big(\frac{qn^2}{L}\big)$ using the definition of $\dJ(m)$ and $I(\beta)\ll |\beta|^{-2}.$

It is well-known (see Theorem 6.25 \cite{L96}) that for any $\epsilon>0,$
there exists $c_\epsilon>0$ such that
$\tau(n)\le c_\epsilon n^{\epsilon},$
where $\tau(n)$ denotes the number of factors of $n$.
Let $\Delta=n^2-5(2m+n)$ and $q=2^a5^bq_1$ with $a,b\ge 0$ and $(q_1,10)=1.$
By Claims \ref{Clm:fuction q}, \ref{Clm:general T(q,m)} and \ref{Clm:T(q,m),(q,10)=1},
we have
$q|T(q;m)|\le \frac{4(q_1,\Delta)}{q_1}.$
Thus
$\sum\limits_{q\le L}T(q;m)O\big(\frac{qn^2}{L}\big)\le
\sum\limits_{q\le L}\frac{(q,\Delta)}{q}O\Big(\frac{(\log L)^2n^2}{L}\Big).$
Since
$$\sum\limits_{q\le L}\frac{1}{q}(q,\Delta)=
\sum\limits_{\substack{d\mid \Delta\\
		d\le L}}
\sum\limits_{\substack{q\le L\\
			(q,\Delta)=d}}\frac dq\le
\sum\limits_{\substack{d\mid \Delta\\
		d\le L}}
\sum\limits_{k\le\frac Ld}\frac 1k\le	
\log L\cdot\sum\limits_{\substack{d\mid \Delta\\
		d\le L}}1\le c_\epsilon \Delta^{\epsilon}\log L=O\Big((\log L) n^{2\epsilon}\Big),$$
we can get that
$\sum\limits_{q\le L}T(q;m)O\big(\frac{qn^2}{L}\big)=O\Big(\frac{(\log L)^3n^{2+2\epsilon}}{L}\Big).$
Since $\sum\limits_{q\le L}T(q;m)=G(m)-\sum\limits_{q> L}T(q;m)$ and
$\dJ(m)=O(n^2),$
by the same method above,
we can get that
$\sum\limits_{q> L}T(q;m)\dJ(m)=O\Big(\frac{(\log L)^3n^{2+2\epsilon}}{L}\Big).$
Combining the above two bounds, we get that
$
R^*=G(m)\mathfrak{I}(m)+O\Big(\frac{(\log L)^3n^{2+2\epsilon}}{L}\Big).
$
By choosing
$\epsilon=\frac{1}{400},$
we finish the proof of Lemma \ref{Lem:M estimation}.
\end{proof}

Now we are ready to prove Theorem \ref{Thm:E(n)}, which would complete the proof of Theorem~\ref{Thm:second thm}.
\medskip

\noindent\textbf{Proof of Theorem \ref{Thm:E(n)}.}
Let $L=n^{\frac{1}{50}}.$
Let $W$ denote the set of integers $m\in \Big[\frac{n^2}{10}+\frac{n^2}{\log n}, \frac{n^2-n}{2}-\frac{n^2}{\log n}\Big]$ such that $\Big|\int_{\dm(L)}f(\alpha)\e\big(-\alpha (2m+n)\big)\, d\alpha\Big|\ge n^2(\log n)^{-11}.$
Since $L=n^{\frac{1}{50}}\le \frac n2,$
by Lemma \ref{Lem:m estimation},
we have
$$|W|=O\left(\frac{n^6(\log n)^5L^{-2}}{n^4(\log n)^{-22}}\right)=O\big(n^2(\log n)^{27}L^{-2}\big).$$
For integers $m\in \Big[\frac{n^2}{10}+\frac{n^2}{\log n}, \frac{n^2-n}{2}-\frac{n^2}{\log n}\Big]\backslash W$,
by Claim \ref{Clm:G(m)}, Lemma \ref{Lem:M estimation} and $\dJ(m)\gg n^2(\log n)^{-10},$
we have
\begin{align*}
\dR(m)&=\int_{\frac1n}^{1+\frac1n}f(\alpha)\e\big(-\alpha (2m+n)\big)\, d\alpha
=\int_{\dm(L)}f(\alpha)\e\big(-\alpha (2m+n)\big)+\int_{\dM(L)}f(\alpha)\e\big(-\alpha (2m+n)\big)\\
&=G(m)\dJ(m)+O(n^{2-\frac{1}{100}})+O(n^2(\log n)^{-11})
\gg n^2(\log n)^{-10}(\log\log n)^{-1}>0.
\end{align*}
In particular, we see that $\dE(n)\subseteq W$ and
$|\dE(n)|\le |W|=O(n^2(\log n)^{27}L^{-2})=O(n^{2-\frac{1}{50}}).$
This completes the proof of Theorem~\ref{Thm:E(n)}.\QED

\section{Concluding remarks}
We now discuss some open problems related to the study of induced subgraphs of given sizes here.

Erd\H{o}s, F\"uredi, Rothschild and S\'os \cite{EFRS} also conjectured that the limsup in \eqref{equ:sigma} is actually a limit.
That says, the limit $\sigma^*(m,f)=\lim_{n\rightarrow\infty}|S_n(m,f)|/\binom{n}{2}$ exists for every pair $(m,f)$.
As mentioned earlier, a result of \cite{EFRS} implies that the majority of pairs $(m,f)$ satisfy $\sigma(m,f)=0$,
and in these cases, it is clear that $\sigma^*(m,f)$ exists (for being zero).
For the pairs $(m,f)$ from Theorem~\ref{Thm:Non zeros} with $r=2$ or $r\geq 5$,
by Theorem~\ref{Thm:second thm} we have $\sigma(m,f)=\frac1r$, and in fact, it also holds that $\sigma^*(m,f)=\frac1r$.
Despite of these supportive results, it seems to be a challenging problem to determine the existence of this limit for any pair $(m,f)$ with $\sigma(m,f)>0$.
We prove the following lemma towards this conjecture, which says that it would suffice to find some large $S_n(m,f)$ with most of its integers appearing in few consecutive intervals.

\begin{lem}\label{Lem:Limit}
Let $(m,f)$ be a pair with $\sigma=\sigma(m,f)>0$.
If for any $\epsilon>0,$ there exists some integer $n$ such that
at least $(\sigma-\epsilon)\binom{n}{2}$ integers in $S_{n}(m,f)$ belong to a union of at most $\epsilon \sqrt{n}$ intervals of consecutive integers,
then the limit $\sigma^*(m,f)$ exists.
\end{lem}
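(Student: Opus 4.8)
\medskip
\noindent\textbf{Proof proposal.}
The plan is to establish $\liminf_{N\to\infty}|S_N(m,f)|/\binom{N}{2}\ge\sigma$; since $\limsup_{N\to\infty}|S_N(m,f)|/\binom{N}{2}=\sigma$ by the definition of $\sigma(m,f)$, this forces $\sigma^*(m,f)=\sigma$ to exist. Fix $\epsilon>0$ and let $n$ be the integer supplied by the hypothesis, so that $S_n(m,f)$ contains a union $\mathcal I=I_1\cup\cdots\cup I_k$ of intervals of consecutive integers, which we may take pairwise disjoint (merge overlapping ones), with $k\le\epsilon\sqrt n$ and $\sum_{j}|I_j|\ge(\sigma-\epsilon)\binom{n}{2}$. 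The crux is the following \emph{lifting claim}: for every $N\ge 2n$ and every integer $e'$ with $0\le e'\le\binom{N}{2}$, writing $\mu':=e'\binom{n}{2}/\binom{N}{2}$, if every integer in the interval $[\mu'-n^{3/2},\,\mu'+n^{3/2}]$ lies in $S_n(m,f)$, then $e'\in S_N(m,f)$.

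To prove the claim I would argue by contradiction: suppose $e'\notin S_N(m,f)$, witnessed by an $N$-vertex graph $H$ with $e'$ edges having no induced $m$-vertex subgraph with exactly $f$ edges. Let $T$ be a uniformly random $n$-element subset of $V(H)$ and put $h(T)=e(H[T])$. Since every induced $m$-vertex subgraph of $H[T]$ is also one of $H$, the graph $H[T]$ shows $(n,h(T))\not\to(m,f)$, i.e.\ $h(T)\notin S_n(m,f)$ for \emph{every} outcome $T$. On the other hand each edge of $H$ survives in $H[T]$ with probability $\binom{n}{2}/\binom{N}{2}$, so $\mathbb E[h(T)]=\mu'$; and if $|T\cap T'|=n-1$ then $|h(T)-h(T')|\le n-1$. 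Hence Lemma~\ref{Lem:Concentration} (with $\alpha=n$, $t=n^{3/2}$, and $\min\{n,N-n\}=n$) gives
$$P\big(|h(T)-\mu'|\ge n^{3/2}\big)\ \le\ 2\exp\!\Big(-\tfrac{2n^{3}}{n\cdot n^{2}}\Big)\ =\ 2e^{-2}\ <\ 1,$$
so some outcome $T$ has $h(T)\in(\mu'-n^{3/2},\mu'+n^{3/2})$; this integer $h(T)$ then lies in $S_n(m,f)$ by the hypothesis of the claim, contradicting the previous sentence. This proves the claim.

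With the claim in hand I would finish by counting. For each $j$ with $I_j=[a_j,b_j]$, set $J_j:=[a_j+n^{3/2},\,b_j-n^{3/2}]$ (empty if $b_j-a_j<2n^{3/2}$); then $|J_j|\ge|I_j|-1-2n^{3/2}$, so $\sum_j|J_j|\ge\sum_j|I_j|-k(1+2n^{3/2})\ge(\sigma-\epsilon)\binom{n}{2}-3\epsilon n^{2}\ge(\sigma-13\epsilon)\binom{n}{2}$, using $k\le\epsilon\sqrt n$ and $n^{2}\le 4\binom{n}{2}$. Put $c=\binom{n}{2}/\binom{N}{2}$. If $e'c\in J_j$ for some $j$, then $[e'c-n^{3/2},e'c+n^{3/2}]\subseteq I_j\subseteq S_n(m,f)$, so the lifting claim (with $\mu'=e'c$) yields $e'\in S_N(m,f)$; moreover, for each $j$ the number of integers $e'$ with $e'c\in J_j$ is at least $|J_j|/c-1$. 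Since the $J_j$ are disjoint,
$$|S_N(m,f)|\ \ge\ \frac1c\sum_j|J_j|\,-\,k\ \ge\ (\sigma-13\epsilon)\binom{N}{2}\,-\,\epsilon\sqrt n .$$
Dividing by $\binom{N}{2}$ and letting $N\to\infty$ with $n$ (and $\epsilon$) fixed gives $\liminf_{N\to\infty}|S_N(m,f)|/\binom{N}{2}\ge\sigma-13\epsilon$; letting $\epsilon\to0$ completes the proof.

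The main obstacle is the lifting claim; the remaining steps are elementary bookkeeping with interval lengths and with counting multiples of the small number $c$ inside intervals. The delicate point in the claim is the interplay of two facts about the edge counts $h(T)=e(H[T])$ of random induced $n$-vertex subgraphs of a bad graph: they are \emph{always} outside $S_n(m,f)$, yet \emph{concentrated} within $n^{3/2}$ of $\mu'$. The concentration comes from Lemma~\ref{Lem:Concentration} with the crude Lipschitz bound $n-1$, which is exactly why the relevant window has width of order $n^{3/2}$ (rather than $o(n)$) — and hence why the hypothesis only needs to control a union of $O(\sqrt n)$ intervals.
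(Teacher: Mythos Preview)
Your proof is correct and follows essentially the same approach as the paper: both use Lemma~\ref{Lem:Concentration} (with Lipschitz constant $n-1$ and deviation of order $n^{3/2}$) to show that slightly shrunk, rescaled copies of the intervals $I_j$ lie in $S_N(m,f)$ for all large $N$, and then count. The differences are purely cosmetic --- the paper parametrizes $I_j=[c_j\binom{n}{2},d_j\binom{n}{2}]$ and arrives at the constant $5\epsilon$ in place of your $13\epsilon$.
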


\begin{proof}
Fix any $\epsilon>0$. Then there exists some $n$ such that
$S_{n}(m,f)$ contains disjoint intervals $I_1, I_2, ..., I_k$ of consecutive integers, where $k\leq \epsilon \sqrt{n}$ and $\sum_{j=1}^k |I_j|\geq (\sigma-\epsilon)\binom{n}{2}$.
Let $I_j=\big[c_j\binom{n}{2},d_j\binom {n}2\big]$ for each $j\in [k]$, where $\sum_{j=1}^k(d_j-c_j)\geq \sigma-\epsilon.$
Let $N$ be any sufficiently large integer and let $\mathcal{E}=\bigcup_{j=1}^kI_j',$ where
$I_j'=\Big[\big(c_j+2n^{-\frac 12}\big)\binom{N}{2},\big(d_j-2n^{-\frac 12}\big)\binom {N} 2\Big]$ for $j\in [k].$

We claim that $\mathcal{E}\subseteq S_N(m,f)$.
To see this, consider any $E\in \mathcal{E}$ (say $E\in I_j'$ for some $j\in [k]$) and any $N$-vertex graph $G$ with $E$ edges.
We aim to show that $G$ contains an $n$-vertex induced subgraph with $e$ edges, where $e\in I_j$.
For an $n$-subset $A$ of $V(G)$, let $e(A)$ be the number of edges in the induced subgraph $G[A].$
Then for any $A,A'\in\binom{V(G)}{n}$ with $|A\cap A'|=n-1,$ we can easily check that $|e(A)-e(A')|\le n-1.$
Let $C$ be a uniformly random element of $\binom{V(G)}{n}.$
It is easy to see that $\mathbb{E}[e(C)]=E\cdot\binom{N-2}{n-2}/\binom{N}{n}=\frac{E}{\binom N 2}\binom {n} 2.$
By Lemma~\ref{Lem:Concentration} (with $\alpha=n-1$ and $t=n^\frac12(n-1)$), we get that
$$P\left(\Big|e(C)-\mathbb{E}[e(C)]\Big|\ge n^\frac12(n-1) \right)\le 2\exp\Big(-\frac{2n(n-1)^2}{\min\{n,N-n\}(n-1)^2}\Big)\le 2e^{-2}<1.$$
Therefore, there must exist an $n$-subset $B$ of $V(G)$ such that
$\Big|e(B)-\frac{E}{\binom N 2}\binom {n} 2\Big|< n^\frac12(n-1)$.
This gives that
$$c_j\binom {n} 2\leq \frac{E}{\binom N 2}\binom {n} 2-n^\frac12(n-1)\leq e(B)\le\frac{E}{\binom N 2}\binom {n} 2+n^\frac12(n-1)\leq d_j\binom {n} 2,$$
where $E\in I_j'=\Big[\big(c_j+2n^{-\frac 12}\big)\binom{N}{2},\big(d_j-2n^{-\frac 12}\big)\binom {N} 2\Big]$.
So $G[B]$ is an $n$-vertex induced subgraph of $G$, where $e(B)\in I_j\subseteq S_{n}(m,f).$
Then $G[B]$ (and thus $G$) contains an $m$-vertex induced subgraph with $f$ edges.
This shows that $E\in S_N(m,f)$ and thus $\mathcal{E}\subseteq S_N(m,f)$, proving the claim.

Since $\sum_{j=1}^k(d_j-c_j)\geq \sigma-\epsilon$ and $k\leq \epsilon \sqrt{n}$,
we see that for sufficiently large $N$,
$$|S_N(m,f)|\geq |\mathcal{E}|=\sum_{j=1}^{k}|I_j'|=\sum_{j=1}^{k}\Big(d_j-c_j-4n^{-\frac 12}\Big)\binom N2\ge\Big(\sigma-\epsilon-4kn^{-\frac 12}\Big)\binom N2\ge \big(\sigma-5\epsilon\big)\binom N2.$$
This shows that for any $\epsilon>0$, we have
$\liminf\limits_{N\rightarrow\infty}\frac{|S_N(m,f)|}{\binom N2}\ge \sigma-5\epsilon.$
Hence $\liminf\limits_{N\rightarrow\infty}\frac{|S_N(m,f)|}{\binom N2}\ge \sigma=\limsup\limits_{N\rightarrow\infty}\frac{|S_N(m,f)|}{\binom N2}$,
which implies that $\sigma^*(m,f)=\lim\limits_{N\rightarrow\infty}\frac{|S_N(m,f)|}{\binom N2}$ exists.
This completes the proof of Lemma \ref{Lem:Limit}.
\end{proof}

It also seems natural to consider the same problem for hypergraphs. Let $r\geq 3$ and $m, f$ be integers satisfying $0\leq f\leq \binom{m}{r}$.
Let $S_n^r(m,f)$ consist of all integers $e$ such that every $n$-vertex $r$-graph with $e$ edges contains an $m$-vertex induced subhypergraph with $f$ edges,
and let $\sigma_r(m,f)=\limsup_{n\rightarrow\infty}|S_n^r(m,f)|/\binom{n}{r}.$
Compared with the graph case (i.e., Theorem~\ref{Thm:main}), it is much easier to show the following.
\begin{lem}\label{Lem:sigma_r(m,f)}
Let $r\ge 3.$ If $(m,f)\notin\{(r,0), (r,1)\},$ then $\sigma_r(m,f)\le 1-r!/r^r;$ otherwise, $\sigma_r(m,f)=1.$
\end{lem}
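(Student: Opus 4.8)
The plan is to peel off the two exceptional pairs and then cover every remaining $(m,f)$ with one explicit family of hypergraphs, using throughout that $m\ge r$ (so that $(m,f)\notin\{(r,0),(r,1)\}$ forces $m\ge r+1$). First I would dispose of the ``otherwise'' clause: when $m=r$, an induced $r$-vertex subhypergraph is a single potential edge and so spans $0$ or $1$ edges, whence $(n,e)\to(r,0)$ fails precisely when the $r$-graph is complete and $(n,e)\to(r,1)$ fails precisely when it is empty; this gives $S_n^r(r,0)=\{0,1,\dots,\binom nr-1\}$ and $S_n^r(r,1)=\{1,\dots,\binom nr\}$, hence $\sigma_r(r,0)=\sigma_r(r,1)=1$.

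For the main case $m\ge r+1$, set $M=M(m,r):=\max\{\prod_{i=1}^r b_i:\ b_i\in\dN,\ \sum_{i=1}^r b_i=m\}$, a constant depending only on $m,r$. For each large $n$ I would fix a balanced partition $[n]=V_1\cup\cdots\cup V_r$ (each $|V_i|\in\{\lfloor n/r\rfloor,\lceil n/r\rceil\}$) and let $T\subseteq\binom{[n]}{r}$ be the set of transversal $r$-sets, so $|T|=\prod_i|V_i|=(r!/r^r+o(1))\binom nr$. The key observation is that any $r$-graph $G$ with $E(G)\subseteq T$ has every $m$-vertex induced subgraph spanning at most $M$ edges (its edges lie among the $\prod_i|W\cap V_i|\le M$ transversals inside a given $m$-set $W$), and dually, any $G$ with $E(G)\supseteq\binom{[n]}{r}\setminus T$ has every $m$-vertex induced subgraph spanning at least $\binom mr-M$ edges. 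Then, if $f>M$: for every $e\in\{0,1,\dots,|T|\}$ take $E(G)$ to be an arbitrary $e$-subset of $T$; this $G$ has no $m$-set with $f$ edges, so $e\notin S_n^r(m,f)$, whence $|S_n^r(m,f)|\le\binom nr-|T|=(1-r!/r^r+o(1))\binom nr$ and $\sigma_r(m,f)\le1-r!/r^r$. If instead $\binom mr-f>M$, I would reduce to the previous case through the identity $\sigma_r(m,f)=\sigma_r(m,\binom mr-f)$ (obtained by complementation), or run the dual construction directly.

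The only pairs not covered above are those with $\binom mr-M\le f\le M$, which force $\binom mr\le 2M(m,r)$. Since $\binom mr/M(m,r)\to r^r/r!\ge 27/6>2$ as $m\to\infty$ with $r$ fixed, this happens for only finitely many $m$, and a short direct check shows the unique surviving pair with $m\ge r+1$ is $(r,m,f)=(3,4,2)$. For it I would use a different construction: a disjoint union of $3$-cliques never spans exactly $2$ edges on any $4$ vertices (four vertices inside one clique span $4$ edges, a $3$-$1$ split spans $1$ edge, anything else spans $0$), and the edge counts $\{\sum_i\binom{n_i}3:\sum_i n_i=n,\ n_i\in\dN\}$ of such $3$-graphs form a subset of $[0,\binom n3]$ of density $1-o(1)$ (by a routine greedy decomposition), so $\sigma_3(4,2)\le 7/9=1-2/9$ as well.

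The conceptual point I expect to matter most is spotting that $f>M(m,r)$ is the correct threshold and that, crucially, running over \emph{all} subfamilies of the transversal $r$-graph realizes \emph{every} integer edge count in $[0,|T|]$ — so the bad values form a genuine interval of density $r!/r^r$, rather than a sparse set like the products $\prod_i b_i$. After that the remaining work is light: the finite case check isolating $(3,4,2)$, the complementation symmetry used when $\binom mr-f>M$, and the routine density estimate for unions of cliques.
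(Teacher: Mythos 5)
Your proposal is correct and takes essentially the approach the paper itself indicates: the paper gives no detailed proof of this lemma, only the remark that complete $r$-partite $r$-graphs suffice for all but finitely many pairs $(m,f)$, which is exactly your transversal construction with the threshold $M(m,r)$ together with the complementation symmetry $\sigma_r(m,f)=\sigma_r\bigl(m,\binom{m}{r}-f\bigr)$. Your isolation of $(3,4,2)$ as the only surviving pair and its treatment via disjoint unions of cliques (whose achievable edge counts have density $1-o(1)$, the greedy argument working once one discards the top $o(n^3)$ values) supplies the ``finitely many exceptions'' detail the paper leaves unstated, and both of your briefly sketched claims --- the finite check across all $r\ge 3$ and the density estimate --- do go through as routinely as you assert.
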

\noindent In fact, it is enough to use the construction of complete $r$-partite $r$-graphs for all but finitely many pairs $(m,f)$.

\medskip

The authors of \cite{EFRS} mentioned the question if $\lim_{n\rightarrow\infty}\Big(\max_{0\leq f\leq \binom{m}{2}} \sigma(m,f)\Big)=0$.
It was not, as answered by their Theorem~\ref{Thm:Non zeros}.
A direct corollary of Theorem~\ref{Thm:main} shows that
$\limsup_{m\rightarrow\infty}\Big(\max_{0\le f\le \binom{m}{2}}\sigma(m,f)\Big)=\frac12,$
but it is not known yet if $\lim_{m\rightarrow\infty}\Big(\max_{0\le f\le \binom{m}{2}}\sigma(m,f)\Big)$ exists or not.

\medskip

We conclude this paper with a remark that by a more careful refinement of the arguments in Subsection~\ref{subsec:4.2},
one can show that $|C(n,r)|=\frac{n^2}{2}-\frac{n^2}{2r}+o(n^2)$ holds for $r=4$ and {\it odd} integers $n.$
It would be interesting to understand the remaining cases $r=3,4$ in Conjecture~\ref{Conj:equzlity}. 



\bibliographystyle{unsrt}

\end{document}